\newtheorem{lem}{Lemma}[section]
\newtheorem{prop}{Proposition}[section] 
\newtheorem{cor}{Corollary}[section] 
\newtheorem{rem}{Remark}[section] 
\newtheorem{tm}{Theorem}[section] 
\newcommand{\ZZ}{\mathbb{Z}}
\newcommand{\QQ}{\mathbb{Q}}
\newcommand{\RR}{\mathbb{R}}
\newcommand{\CC}{\mathbb{C}}
\newcommand{\PP}{\mathbb{P}}
\newcommand{\ee}{\mathbf{e}} \newcommand{\ff}{\mathbf{f}}
\newcommand{\Mon}{\mathbf{Mon}} \newcommand{\Ref}{\mathbf{Ref}}
\newcommand{\Sp}{\mathbf{Sp}} \newcommand{\SO}{\mathbf{SO}}
\newcommand{\GL}{\mathbf{GL}} \newcommand{\SL}{\mathbf{SL}}
\title
{Picard-Vessiot groups of Lauricella's hypergeometric systems $E_C$ and 
Calabi-Yau varieties arising integral representations}
\author{Yoshiaki Goto and Kenji Koike}  
\begin{document}
\maketitle
\begin{abstract}
We study the Zariski closure of the monodromy group $\Mon$ of Lauricella's 
hypergeometric function $F_C$. If the identity component $\Mon^0$ acts irreducibly, 
then $\overline{\Mon} \cap \SL_{2^n}(\CC)$ must be one of classical groups 
$\SL_{2^n}(\CC), \SO_{2^n}(\CC)$ and $\Sp_{2^n}(\CC)$. 
We also study Calabi-Yau varieties arising from integral 
representations of $F_C$.
\end{abstract}
\section{Introduction}
In \cite{BH}, Beukers and Heckman studied the monodromy of the generalized hypergeometric 
function ${}_{n+1}F_n$ from a viewpoint of differential Galois theory. They determined 
the differential Galois group called the Picard-Vessiot group (for Fuchsian equations, 
which is given by the Zariski closure of the monodromy group), 
and parameters for which the monodromy group is finite. 
In this paper, applying their method for results in \cite{Go2} and \cite{GM2}, 
we study the Zariski closure of the monodromy group of Lauricella's hypergeometric 
function 
\begin{align*}
  F_C(a,b,c;x) =
  \sum_{m_1 ,\ldots ,m_n =0} ^{\infty } 
  \frac{(a)_{m_1 +\cdots +m_n} (b)_{m_1 +\cdots +m_n}}
  {(c_1)_{m_1}\cdots (c_n)_{m_n} m_1 ! \cdots m_n !} x_1^{m_1} \cdots x_n^{m_n},
\end{align*} 
and we also study Calabi-Yau varieties arising from integral representations of $F_C$.
\\  \indent 
Lauricella's hypergeometric function $F_C$, together with  $F_A, F_B$ and $F_D$, was 
introduced by Appell and Lauricella in the 19th century as generalizations of the 
Gauss hypergeometric function ${}_2F_1$. In the case of two variables, Lauricella's 
$F_A, F_B, F_C$ and $F_D$ are called Appell's $F_2, F_3, F_4$ and $F_1$ respectively. 
The monodromy of these functions have been studied by many authors. In \cite{Sa}, Sasaki 
showed that Picard-Vessiot groups for $F_i \ (i=2,3,4)$ and $F_D$ are general linear 
groups for general parameters. 
Deligne and Mostow gave a list of parameters of Lauricella's $F_D$ that 
produce complex ball uniformizations, and concrete examples of non-arithmetic subgroup of 
unitary groups in \cite{DM}. Recently the structure of monodromy group for $F_C$ was 
studied in \cite{Go2} and \cite{GM2}. According to them, the monodromy group $\Mon$ for 
$F_C$ is generated by $M_0, M_1, \dots, M_n$ where $M_0$ is a reflection in the sense 
of \cite{BH} and $M_1,\dots, M_n$ form an Abelian subgroup. Applying results in \cite{BH}, 
we can classify the Zariski closure $\overline{\Mon}$ as in the case of the generalized 
hypergeometric function ${}_{n+1}F_n$. If the identity component $\Mon^0$ acts irreducibly, 
then $\overline{\Mon} \cap \SL_{2^n}(\CC)$ must be one of classical groups 
$\SL_{2^n}(\CC), \SO_{2^n}(\CC)$ and $\Sp_{2^n}(\CC)$ (Theorem \ref{main}). 
To study irreducibility conditions of the identity component, we introduce 
the reflection subgroup $\Ref \subset \Mon$, generated by $gM_0g^{-1} \ (g \in \Mon)$. 
In Theorem \ref{main2}, we give the necessary and sufficient condition for the 
irreducibility of $\Ref$ in terms of parameters. It is simply that
at most one of $\gamma_1 ,\ldots ,\gamma_n ,\alpha \beta^{-1}$ is $-1$  
in addition to irreducibility conditions for $\Mon$ in Proposition \ref{prop-irr}. 
The proof is based on ideas in a work of Kato for Appell's $F_4$ (\cite{Ka}). 
Moreover we prove that if the action of the identity component $\Ref^0$ is reducible 
and $\Ref$ is irreducible, then $\Ref$ and $\Mon$ is finite (Theorem \ref{main3}).   
\\ \indent
In the last section, we study double coverings $V(x)$ of projective spaces associated to  
integral representations of $F_C(a,b,c;x)$ with $a=b=1/2, c_k=1$.
It is known that the monodromy group for hyperelliptic curves is arithmetic, that is, 
finite index in $\Sp_{2g}(\ZZ)$. Since a period integral for a hyperelliptic curve is 
given by Lauricella's function $F_D$, our varieties are regarded as the counterpart of 
hyperelliptic curves. By the results of the former part, we see that the Zariski closure 
of the monodromy group for $a=b=1/2, c_k=1$ is the symplectic or orthogonal group. 
It is interesting to study arithmeticy of these group. In the case of $n=2$, it is well 
known that Appell's hypergeometric function $F_4(1/2,1/2,1,1;x_1,x_2)$ is a products of 
Gauss's hypergeometric functions. We show that $V(x)$ is in fact a product Kummer surface, 
and the monodromy group contains $\Gamma(2) \times \Gamma(2)$ as a subgroup of index 2. 
In the case of $n=3$, 
we have double octic Calabi-Yau varieties $\widetilde{V}$ of Euler number $128$
by resolving singularities. 
For computation of Euler and Hodge numbers, we use methods in \cite{CS} and \cite{CvS}. 
For $n \geq 4$, we do not know if there are crepant resolutions of $V(x)$.   
\section{Monodromy of the system $E_C$}
\subsection{Lauricella's hypergeometric function $F_C$} 
Lauricella's hypergeometric function $F_C$ of $n$ variables $x_1, . . . , x_n$ is
\begin{align*}
  F_C(a,b,c;x) =
  \sum_{m_1 ,\ldots ,m_n =0} ^{\infty } 
  \frac{(a)_{m_1 +\cdots +m_n} (b)_{m_1 +\cdots +m_n}}
  {(c_1)_{m_1}\cdots (c_n)_{m_n} m_1 ! \cdots m_n !} x_1^{m_1} \cdots x_n^{m_n} ,
\end{align*}
where $x=(x_1 ,\ldots ,x_n)$, $c=(c_1 ,\ldots ,c_n)$, 
$c_1 ,\ldots ,c_n \not\in \{ 0,-1,-2,\ldots \}$, and $(c_1)_{m_1}=\Gamma (c_1+m_1)/\Gamma (c_1)$. 
This series converges in the domain 
$\left\{ (x_1 ,\ldots ,x_n ) \in \CC^n  \ \middle| \ \sum_{k=1}^{n} \sqrt{|x_k|} <1  \right\}$.
In the case of $n=2$, the series $F_C(a,b,c;x)$ is called 
Appell's hypergeometric series $F_4(a,b,c_1,c_2;x_1,x_2)$. 
Let $\partial_k \ (k =1, \ldots , n)$ be the partial differential operator with respect 
to $x_k$. We set $\theta_k =x_k \partial_k$, $\theta =\sum_{k=1}^n \theta_k$.  
Lauricella's $ F_C (a,b,c;x)$ satisfies differential equations 
$$
\left[ \theta_k (\theta_k+c_k-1)-x_k(\theta +a)(\theta +b)  \right] f(x)=0, 
\quad k =1, \ldots , n.
$$ 
The system generated by them is 
called Lauricella's hypergeometric system $E_C (a,b,c)$ of differential equations. 
\begin{prop}[(\cite{HT})]
The system $E_C (a,b,c)$ is a holonomic system of rank $2^n$ with the singular locus 
\begin{align*}
  S:= \left( \prod_{k=1}^n x_{k} \cdot R(x)=0 \right) \subset \CC^n ,\quad
  R(x_1 ,\ldots ,x_n):=\prod_{\epsilon_1 ,\ldots ,\epsilon_n =\pm 1} 
  \left( 1+\sum_{k=1}^n \epsilon_k \sqrt{x_k} \right) . 
\end{align*}
\end{prop}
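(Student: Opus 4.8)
The plan is to pass to the left ideal $\mathcal{I}\subset\mathcal{D}$ generated by $L_1,\dots,L_n$ in the ring of differential operators $\mathcal{D}=\CC\langle x_1,\dots,x_n,\partial_1,\dots,\partial_n\rangle$ and to study the module $\mathcal{M}=\mathcal{D}/\mathcal{I}$ through its characteristic variety. First I would record the factorisation $L_k=x_k\widetilde{L}_k$ with $\widetilde{L}_k=x_k\partial_k^2+c_k\partial_k-(\theta+a)(\theta+b)$, so that on the open set $U=\{x_1\cdots x_n\neq 0\}$ the system is equally presented by $\widetilde{L}_1,\dots,\widetilde{L}_n$; the coordinate hyperplanes then get re-examined by hand at the end.

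The first substantive step is holonomicity together with the inclusion $\mathrm{Sing}(\mathcal{M})\subseteq S$. Computing principal symbols gives $\sigma(\widetilde{L}_k)=x_k\xi_k^2-u^2$, where $u:=\sum_{j=1}^n x_j\xi_j$. On $U$ the simultaneous vanishing of all $\sigma(\widetilde{L}_k)$ forces $\xi_k=\epsilon_k\,u/\sqrt{x_k}$ with $\epsilon_k=\pm1$, and substituting this back into $u$ yields $u\bigl(1-\sum_j\epsilon_j\sqrt{x_j}\bigr)=0$: either $u=0$, giving the zero section, or $\sum_j\epsilon_j\sqrt{x_j}=1$ for some choice of signs — a condition equivalent to $R(x)=0$ after clearing radicals, since the product over all sign vectors is invariant under $\sqrt{x_j}\mapsto-\sqrt{x_j}$ and hence polynomial in $x$. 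A short separate inspection on each coordinate hyperplane shows that the characteristic variety still has dimension at most $n$ there as well. Hence $\mathcal{M}$ is holonomic and its singular locus lies in $S=(x_1\cdots x_n\,R(x)=0)$.

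For the rank I would show that over $U$ the characteristic ideal of $\mathcal{M}$ is generated by $\sigma(\widetilde{L}_1),\dots,\sigma(\widetilde{L}_n)$ — a finite Buchberger-type reduction in the localised ring of differential operators. Granting this, at a generic point $x_0$ (so $R(x_0)\neq0$) the ideal $\bigl(\sigma(\widetilde{L}_1),\dots,\sigma(\widetilde{L}_n)\bigr)$ in $\CC(x)[\xi]$ is homogeneous in $\xi$ and cuts out only the origin, so it is a complete intersection of $n$ quadrics with $\dim_{\CC(x)}\CC(x)[\xi]/\bigl(\sigma(\widetilde{L}_1),\dots,\sigma(\widetilde{L}_n)\bigr)=2^n$ by Bézout; this is the multiplicity of the zero section in the characteristic cycle, hence the holonomic rank. (One obtains $\operatorname{rank}\le2^n$ even without the Buchberger step, and $\operatorname{rank}\ge2^n$ by exhibiting the $2^n$ Frobenius solutions $f_I=\bigl(\prod_{k\in I}x_k^{1-c_k}\bigr)F_C(\,\cdot\,;x)$ with suitably shifted parameters, $I\subseteq\{1,\dots,n\}$, linearly independent for generic $(a,b,c)$.) Finally, to confirm that $S$ is the \emph{whole} singular locus I would check that the flat connection carried by $\mathcal{M}$ on $\CC^n\setminus S$ has a genuine pole along each component: along $x_k=0$ the indicial polynomial $\theta_k(\theta_k+c_k-1)$ has roots $0$ and $1-c_k$, producing non-trivial local monodromy, and along $R(x)=0$ the component of the characteristic variety found above is non-empty (equivalently, the residue of the connection along $R=0$ is non-zero), so $\{R=0\}$ is not an apparent singularity.

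The main obstacle is the Buchberger step: showing that over $U$ the characteristic ideal is exactly $\bigl(\sigma(\widetilde{L}_1),\dots,\sigma(\widetilde{L}_n)\bigr)$, i.e. that no operator of lower order in the ideal contributes a strictly larger symbol. The precise value $2^n$ of the rank, and the fact that $S$ is the full singular locus rather than a proper subvariety, both depend on this; the symbol computation, the Bézout count, and the component-by-component monodromy check along $S$ are then routine, with the resonant parameter values ($c_k$ an integer) handled by the usual logarithmic analysis. One could alternatively seek a realisation of $E_C$ within the A-hypergeometric (GKZ) framework and quote the general theorems on holonomic rank and singular support, but the direct route seems cleaner here.
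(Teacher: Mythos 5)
The paper offers no proof of this proposition: it is imported verbatim from Hattori--Takayama \cite{HT}, so there is no internal argument to compare yours against. That said, your outline follows essentially the same strategy as \cite{HT}: factor $L_k = x_k\widetilde{L}_k$, read off the principal symbols $x_k\xi_k^2 - u^2$, solve the symbol equations over $\{x_1\cdots x_n\neq 0\}$ to find that the characteristic variety off the zero section lies over $R(x)=0$, get $\operatorname{rank}\le 2^n$ from the Bézout count for the $n$ quadrics, and $\operatorname{rank}\ge 2^n$ from the Frobenius solutions $\prod_{k\in I}x_k^{1-c_k}F_C(\cdot;x)$. All of these computations check out (e.g.\ $\theta_k(\theta_k+c_k-1)=x_k(x_k\partial_k^2+c_k\partial_k)$, so the factorization is correct, and $R(x_0)\neq 0$ does force $u=0$ and hence $\xi=0$). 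The one place where your sketch understates the work is the point you yourself flag: establishing that the characteristic ideal is generated by the listed symbols, and --- more delicately --- bounding $\dim\operatorname{Char}(\mathcal{M})$ over the deeper strata (intersections of coordinate hyperplanes and their intersections with $R=0$), where the symbols of the given generators degenerate and one must produce further elements of the ideal. That Gröbner-basis computation is precisely the technical content of \cite{HT}, so your proposal is a faithful outline of the known proof rather than a complete argument; nothing in it is wrong, but the ``short separate inspection'' on the boundary strata and the identification of the characteristic ideal are where the real labor lies. Your closing remark about the reverse inclusion $S\subseteq\mathrm{Sing}$ also needs care: non-emptiness of the zero locus of the symbols over $R=0$ does not by itself show that $\operatorname{Char}(\mathcal{M})$ meets it, so one should argue via the local exponents (there is always a half-integral exponent along $R=0$, consistent with $M_0$ being a reflection), as you suggest for the coordinate hyperplanes.
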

In \cite{AK} and \cite{Ki}, 
an integral representation of $F_C(a,b,c;x)$ with generic parameters 
is given in terms of the twisted cycles.   
\begin{prop}[(\cite{AK}, \cite{Ki})]
  For sufficiently small positive real numbers $x_1 ,\ldots ,x_n$, 
  if $c_1 ,\ldots ,c_n ,a-\sum c_k \not\in \ZZ$, 
  then $F_C (a,b,c;x)$ admits the following integral representation: 
  \begin{align*}
    F_C (a,b,c ;x)  
    =&\frac{\Gamma (1-a)}{\prod_k \Gamma (1-c_k)\cdot \Gamma (\sum_k c_k -a-n+1)}  \\
    & \cdot \int_{\Delta} \prod_k t_k ^{-c_k} \cdot (1-\sum_k t_k )^{\sum_k c_k -a-n} 
    \cdot \left( 1-\sum_k \frac{x_k}{t_k} \right) ^{-b} dt_1 \wedge \cdots \wedge dt_n , 
  \end{align*}
  where $\Delta$ is the twisted cycle made by an $n$-simplex, 
  in Sections 3.2 and 3.3 of \cite{AK}. 
\end{prop}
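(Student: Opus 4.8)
The plan is to verify the identity directly: expand the non‑Dirichlet factor of the integrand into a power series in the $x_k$, integrate term by term, evaluate each resulting twisted Dirichlet integral, and recognize the defining series of $F_C$.

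First, for sufficiently small positive $x_1,\ldots,x_n$, on a representative of the twisted cycle $\Delta$ the quantity $\sum_k x_k/t_k$ stays inside the unit disc, so the generalized binomial series gives
\[
  \left(1-\sum_{k=1}^n\frac{x_k}{t_k}\right)^{-b}
  =\sum_{m_1,\ldots,m_n=0}^{\infty}\frac{(b)_{m_1+\cdots+m_n}}{m_1!\cdots m_n!}\prod_{k=1}^n x_k^{m_k}t_k^{-m_k}.
\]
Writing $|m|=m_1+\cdots+m_n$, substituting this into the integral and exchanging summation and integration (legitimate because $\Delta$ is a finite twisted chain and the series converges uniformly on it once the $x_k$ are small), the integral in the statement becomes
\[
  \sum_{m_1,\ldots,m_n=0}^{\infty}\frac{(b)_{|m|}}{m_1!\cdots m_n!}\left(\prod_{k=1}^n x_k^{m_k}\right)\int_{\Delta}\prod_{k=1}^n t_k^{-c_k-m_k}\left(1-\sum_{k=1}^n t_k\right)^{\sum_k c_k-a-n}dt_1\wedge\cdots\wedge dt_n.
\]

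Second — and this is the crux — I would evaluate each of these as a twisted Dirichlet (generalized beta) integral. The key lemma is that, with the normalization of $\Delta$ fixed in Sections 3.2 and 3.3 of \cite{AK} and under the non‑integrality hypotheses $c_1,\ldots,c_n,\,a-\sum_k c_k\notin\ZZ$,
\[
  \int_{\Delta}\prod_{k=1}^n t_k^{\lambda_k-1}\left(1-\sum_{k=1}^n t_k\right)^{\mu-1}dt_1\wedge\cdots\wedge dt_n=\frac{\prod_{k=1}^n\Gamma(\lambda_k)\cdot\Gamma(\mu)}{\Gamma\!\left(\mu+\sum_{k=1}^n\lambda_k\right)},
\]
namely the meromorphic continuation of the classical Dirichlet integral over the $n$‑simplex. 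Here $\lambda_k=1-c_k-m_k$ and $\mu=\sum_k c_k-a-n+1$, so $\mu+\sum_k\lambda_k=1-a-|m|$. This is either exactly the cited computation, or it can be proved by induction on $n$ from the one‑variable twisted beta integral together with Fubini on the twisted cycle; the non‑integrality conditions are precisely what makes $\Delta$ a well‑defined, nonzero twisted cycle at every stage of the recursion.

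Third, I would convert the $\Gamma$‑values into Pochhammer symbols via the shift–reflection identity $\Gamma(z-m)=(-1)^m\Gamma(z)/(1-z)_m$: with $z=1-c_k$ this gives $\Gamma(1-c_k-m_k)=(-1)^{m_k}\Gamma(1-c_k)/(c_k)_{m_k}$, and with $z=1-a$ it gives $1/\Gamma(1-a-|m|)=(a)_{|m|}/\bigl((-1)^{|m|}\Gamma(1-a)\bigr)$. The signs $(-1)^{m_1}\cdots(-1)^{m_n}$ cancel the $(-1)^{|m|}$, so each integral equals
\[
  \frac{(a)_{|m|}\cdot\prod_{k=1}^n\Gamma(1-c_k)\cdot\Gamma\!\left(\sum_k c_k-a-n+1\right)}{\Gamma(1-a)\cdot\prod_{k=1}^n(c_k)_{m_k}}.
\]
Substituting back, the constant $\prod_k\Gamma(1-c_k)\cdot\Gamma(\sum_k c_k-a-n+1)/\Gamma(1-a)$ pulls out of the sum and what remains is exactly the series defining $F_C(a,b,c;x)$; multiplying by the prefactor in the statement then yields the claimed identity. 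The main obstacle is the rigorous treatment of the twisted Dirichlet integral: one must pin down the precise loaded $n$‑simplex of \cite{AK} so that the regularized integral genuinely equals the analytic continuation, and confirm that $c_k,\,a-\sum_k c_k\notin\ZZ$ are exactly the conditions that keep the twisted cycle nonzero and make the inductive step valid. The expansion, the exchange of sum and integral for small positive $x_k$, and the $\Gamma$‑function bookkeeping are all routine.
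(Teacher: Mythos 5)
Your argument is correct and is essentially the standard derivation. Note first that the paper itself gives no proof of this proposition --- it is quoted from \cite{AK} and \cite{Ki} --- so there is no internal proof to compare against; your reconstruction (expand $(1-\sum_k x_k/t_k)^{-b}$ into its multinomial series, integrate term by term over $\Delta$, evaluate each term by the regularized Dirichlet integral, convert the $\Gamma$-values to Pochhammer symbols) is exactly the computation carried out in the cited sources, and your bookkeeping checks out: $\mu+\sum_k\lambda_k=1-a-|m|$ and the signs produced by $\Gamma(1-c_k-m_k)=(-1)^{m_k}\Gamma(1-c_k)/(c_k)_{m_k}$ cancel against those from $1/\Gamma(1-a-|m|)$, leaving precisely the coefficient $(a)_{|m|}(b)_{|m|}/\bigl(\prod_k(c_k)_{m_k}m_k!\bigr)$. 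The one ingredient you assert rather than prove is the decisive one: that the loaded $n$-simplex of \cite{AK}, Sections 3.2--3.3, is normalized so that the regularized integral equals $\prod_k\Gamma(\lambda_k)\cdot\Gamma(\mu)/\Gamma\bigl(\mu+\sum_k\lambda_k\bigr)$ with no residual factors of the form $(1-e^{2\pi\sqrt{-1}\lambda_k})$; that normalization is exactly the content of the cited sections and is where the hypotheses $c_k,\,a-\sum_k c_k\notin\ZZ$ are consumed, so deferring to it is legitimate for a proposition attributed to those references, though a self-contained proof would have to supply it (e.g.\ by the induction on $n$ you sketch). It is also worth observing that the paper's proof of the \emph{neighbouring} proposition (the case $c_k\in\ZZ_{\geq1}$, where the twisted cycle degenerates) follows the same skeleton with one substitution: the cycle $\Delta$ is replaced by the torus $C_\epsilon^n$ and the inner integrals are evaluated by the residue theorem instead of by the Dirichlet formula. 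Your argument and theirs are the generic-parameter and integer-parameter instances of the same computation.
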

For our applications, we show that $F_C$ has an Euler-type integral representation 
even if $c_i$'s are positive integers. 
\begin{prop}
  We assume $c_1,\ldots,c_n \in \ZZ_{\geq 1}$. 
  Let $\epsilon$ and $x_k$ be small positive real numbers such that
  $$
  0<\epsilon <\frac{1}{n+1},\quad
  0<x_k <\frac{\epsilon^2}{n}. 
  $$
  Then the integration on the direct product 
  $$
  C_{\epsilon}^n : |t_1|=|t_2|=\cdots =|t_n|=\epsilon
  $$
  of circles expresses $F_C$: 
  \begin{align*}
    F_C(a,b,c;x) 
    =\frac{(-1)^{n+\sum_k c_k}}{(2\pi \sqrt{-1})^n} \frac{\Gamma (1-a)\prod_k \Gamma(c_k)}{\Gamma(1-a-n+\sum_k c_k)}
    \int_{C_{\epsilon}^n } \prod_k t_k ^{-c_k} \cdot (1-\sum_k t_k )^{\sum_k c_k -a-n} 
    \cdot \left( 1-\sum_k \frac{x_k}{t_k} \right) ^{-b} dt ,
  \end{align*}
  where $dt=dt_1 \wedge \cdots \wedge dt_n$. 
\end{prop}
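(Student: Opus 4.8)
The plan is to expand the two non-monomial factors of the integrand as power series in $t_1,\dots,t_n$ on the torus $C_\epsilon^n$, integrate term by term, and identify the result with a constant multiple of the series $F_C$ after simplifying $\Gamma$-factors. This cannot be obtained by a naive limit from the twisted-cycle representation of the previous proposition, because there the normalizing constant has $\prod_k\Gamma(1-c_k)$ in the denominator, which diverges as each $c_k$ tends to a positive integer while the twisted cycle $\Delta$ simultaneously degenerates; the circle contour $C_\epsilon^n$ is the natural substitute.

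Write $|m|=m_1+\cdots+m_n$. On the polydisc $|t_k|\le\epsilon$ the hypothesis $\epsilon<1/(n+1)$ gives $\sum_k|t_k|\le n\epsilon<1$, so
\[
\Big(1-\sum_k t_k\Big)^{\sum_k c_k-a-n}
=\sum_{l_1,\dots,l_n\ge 0}\frac{\big(a+n-\sum_k c_k\big)_{|l|}}{l_1!\cdots l_n!}\,t_1^{l_1}\cdots t_n^{l_n}
\]
converges absolutely and uniformly there; and on $C_\epsilon^n$ the hypothesis $x_k<\epsilon^2/n$ gives $\sum_k|x_k/t_k|=\sum_k x_k/\epsilon<\epsilon<1$, so
\[
\Big(1-\sum_k\frac{x_k}{t_k}\Big)^{-b}
=\sum_{m_1,\dots,m_n\ge 0}\frac{(b)_{|m|}}{m_1!\cdots m_n!}\,\frac{x_1^{m_1}\cdots x_n^{m_n}}{t_1^{m_1}\cdots t_n^{m_n}}
\]
converges absolutely and uniformly on $C_\epsilon^n$. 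Since the $c_k$ are integers, $\prod_k t_k^{-c_k}$ is a rational function with no branch ambiguity, and the two factors above are taken with the principal branch, which is unambiguous because $1-\sum_k t_k$ and $1-\sum_k x_k/t_k$ stay near $1$ on the contour.

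Next I would multiply the two series and $\prod_k t_k^{-c_k}$ together, interchange summation and integration (legitimate by the uniform convergence on the compact torus), and use $\frac{1}{2\pi\sqrt{-1}}\oint_{|t_k|=\epsilon}t_k^{j}\,dt_k=\delta_{j,-1}$ in each variable. In the product, the monomial $\prod_k t_k^{l_k-m_k-c_k}$ integrates to a nonzero value only when $l_k=m_k+c_k-1$ for all $k$; because $c_k\ge 1$ this index is $\ge 0$, so exactly one term of the $l$-summation contributes for each $m$. The crucial cancellation is
\[
\Big(a+n-\sum_k c_k\Big)_{|m|+\sum_k c_k-n}
=\frac{\Gamma(a+|m|)}{\Gamma(a+n-\sum_k c_k)}
=(a)_{|m|}\,\frac{\Gamma(a)}{\Gamma(a+n-\sum_k c_k)},
\]
valid because the Pochhammer shift $|m|+\sum_k c_k-n$ lands the argument exactly at $a+|m|$; together with $\prod_k(m_k+c_k-1)!=\prod_k(c_k)_{m_k}\,\Gamma(c_k)$ this shows $\frac{1}{(2\pi\sqrt{-1})^n}\int_{C_\epsilon^n}(\cdots)\,dt=\frac{\Gamma(a)}{\Gamma(a+n-\sum_k c_k)\,\prod_k\Gamma(c_k)}\,F_C(a,b,c;x)$. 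To bring the constant into the stated form I would apply the reflection formula $\Gamma(z)\Gamma(1-z)=\pi/\sin\pi z$: since $n-\sum_k c_k\in\ZZ$ we have $\sin\pi\big(a+n-\sum_k c_k\big)=(-1)^{n+\sum_k c_k}\sin\pi a$, whence $\Gamma(a+n-\sum_k c_k)/\Gamma(a)=(-1)^{n+\sum_k c_k}\,\Gamma(1-a)/\Gamma(1-a-n+\sum_k c_k)$, which is precisely the prefactor in the proposition.

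The difficulty is not conceptual but lies in the convergence bookkeeping: one must check that both binomial series converge absolutely and uniformly on the \emph{same} compact set $C_\epsilon^n$ so that their product may be integrated term by term, and the inequalities $0<\epsilon<1/(n+1)$ and $0<x_k<\epsilon^2/n$ are tuned exactly so that $n\epsilon<1$ and $\sum_k x_k/\epsilon<1$ hold simultaneously. The only place a sign must be watched is the reflection-formula step, which is the sole source of the factor $(-1)^{n+\sum_k c_k}$.
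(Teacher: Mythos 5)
Your proposal is correct and follows essentially the same route as the paper's proof: expand both binomial factors as series converging uniformly on the torus $C_{\epsilon}^n$ (using exactly the bounds $n\epsilon<1$ and $\sum_k x_k/\epsilon<\epsilon<1$), integrate term by term so that only the indices $l_k=m_k+c_k-1$ survive by the residue theorem, simplify the Pochhammer symbol to $\Gamma(a+|m|)/\bigl(\Gamma(a+n-\sum_k c_k)\prod_k\Gamma(c_k+m_k)\bigr)$, and finish with the reflection formula to produce the sign $(-1)^{n+\sum_k c_k}$. No gaps.
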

\begin{proof}
  By the assumption, if $(t_1,\ldots,t_n)\in C_{\epsilon}^n$, we have 
  \begin{align*}
    &\left| \sum_k \frac{x_k}{t_k} \right| 
    \leq \sum_k \frac{|x_k|}{|t_k|} 
    <\sum_k \frac{\epsilon^2}{n} \cdot \frac{1}{\epsilon} =\epsilon <1 ,\\
    &\left| \sum_k t_k \right|
    \leq \sum_k |t_k| =n\epsilon <\frac{n}{n+1}<1. 
  \end{align*}
  Thus the series
  \begin{align*}
  \left( 1-\sum_k \frac{x_k}{t_k} \right) ^{-b} 
  &=\sum_{m_1,\dots,m_n} \frac{(b)_{m_1+\dots+m_n}}{\prod_k m_k!} 
  \prod_k \left( \frac{x_k}{t_k} \right)^{m_k} , \\
  \left( 1-\sum_k t_k \right)^{\sum_k c_k -a-n}
  &=\sum_{p_1,\dots,p_n} \frac{(a+n-\sum c_k)_{p_1+\dots+p_n}}{\prod_k p_k!} 
  \prod_k t_k^{p_k}     
  \end{align*}
  uniformly converge on $C_{\epsilon}^n$, and hence we have
  \begin{align*}
    &\int_{C_{\epsilon}^n } \prod_k t_k ^{-c_k} \cdot (1-\sum_k t_k )^{\sum_k c_k -a-n} 
    \cdot \left( 1-\sum_k \frac{x_k}{t_k} \right) ^{-b} dt \\
    &=\sum_{m_1,\dots,m_n} \frac{(b)_{m_1+\dots+m_n}}{\prod_k m_k!} \prod_k x_k^{m_k}
    \int_{C_{\epsilon}^n } \prod_k t_k ^{-c_k-m_k} \cdot (1-\sum_k t_k )^{\sum_k c_k -a-n} dt \\
    &=\sum_{m_1,\dots,m_n} \sum_{p_1,\dots,p_n}
    \frac{(a+n-\sum c_k)_{p_1+\dots+p_n}(b)_{m_1+\dots+m_n}}{\prod_k p_k!\prod_k m_k!} 
    \prod_k x_k^{m_k} 
    \int_{C_{\epsilon}^n } \prod_k t_k^{p_k-c_k-m_k} dt .
  \end{align*}
  By the residue theorem, only the terms with $p_k =c_k+m_k-1$ survive. 
  If $p_k =c_k+m_k-1$, then 
  \begin{align*}
    \frac{(a+n-\sum c_k)_{p_1+\dots+p_n}}{\prod_k p_k!}
    =\frac{(a+n-\sum c_k)_{\sum_k c_k +\sum_k m_k -n}}{\prod_k (c_k+m_k-1)!}
    =\frac{\Gamma (a+\sum_k m_k)}{\Gamma(a+n-\sum c_k) \cdot \prod_k \Gamma (c_k+m_k)} .
  \end{align*}
  Thus we obtain
  \begin{align*}
    &\int_{C_{\epsilon}^n } \prod_k t_k ^{-c_k} \cdot (1-\sum_k t_k )^{\sum_k c_k -a-n} 
    \cdot \left( 1-\sum_k \frac{x_k}{t_k} \right) ^{-b} dt \\
    &=(2\pi \sqrt{-1})^n
    \sum_{m_1,\dots,m_n} \frac{(b)_{m_1+\dots+m_n}}{\prod_k m_k!} \prod_k x_k^{m_k}
    \cdot \frac{\Gamma (a+\sum_k m_k)}{\Gamma(a+n-\sum c_k) \cdot \prod_k \Gamma (c_k+m_k)} \\
    &=(2\pi \sqrt{-1})^n \frac{\Gamma (a)}{\Gamma(a+n-\sum c_k) \prod_k \Gamma(c_k)}
    \sum_{m_1,\dots,m_n} \frac{(a)_{\sum m_k} (b)_{\sum m_k}}{\prod_k (c_k)_{m_k} \prod_k m_k!} 
    \prod_k x_k^{m_k} \\
    &=(2\pi \sqrt{-1})^n \frac{\Gamma (a)}{\Gamma(a+n-\sum c_k) \prod_k \Gamma(c_k)}
    \cdot F_C (a,b,c;x) . 
  \end{align*}
  By using the reflection formula, we conclude the proposition. 
\end{proof}

\begin{rem}
  Roughly, $C_{\epsilon}^n$ can be regarded as 
  the limit of $\prod_k (1-e^{-2\pi i c_k}) \cdot \Delta$ as $c_k$'s to integers. 
\end{rem}
\subsection{Monodromy representation}
The monodromy representation of $F_C$ is expressed in terms of the twisted homology 
groups in \cite{Go2}, 
and clear representation matrices of circuit transformations are 
obtained in \cite{GM2}. 
Here we briefly review results in \cite{Go2} and \cite{GM2}.
Let $X$ be the complement of the singular locus $S$ of the system $E_C(a,b,c)$. 
Put $\dot{x}=\left( \frac{1}{2n^2},\ldots ,\frac{1}{2n^2} \right) \in X$. 
Let $\rho_0, \rho_1 ,\ldots ,\rho_n$ be loops in $X$ so that 
\begin{itemize}
\item $\rho_0$ turns the hypersurface $(R(x)=0)$ around the point 
  $\left( \frac{1}{n^2},\ldots, \frac{1}{n^2} \right)$, positively, 
\item $\rho_k \ (k=1,\dots , n)$ turns the hyperplane $(x_k=0)$, positively. 
\end{itemize}
For explicit definitions of them, see \cite{Go2}. 
\begin{prop}[(\cite{Go2})]
  The loops $\rho_0 ,\rho_1 ,\ldots ,\rho_n$ generate 
  the fundamental group $\pi_1 (X,\dot{x})$. 
  Moreover, if $n\geq 2$, then they satisfy the following relations:
  \begin{align*}
    \rho_i \rho_j =\rho_j \rho_i \quad (i,j=1,\dots , n) ,\quad
    (\rho_0 \rho_k)^2 =(\rho_k \rho_0)^2 \quad (k=1,\dots , n).
  \end{align*}
\end{prop}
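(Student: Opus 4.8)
The plan is to realize $X=\CC^n\setminus S$ through a finite covering and to extract generators and relations from the local structure of the divisor $S$ along its singular strata. First I would decompose $S$ into irreducible components: the $n$ coordinate hyperplanes $\{x_k=0\}$ and the hypersurface $\{R(x)=0\}$. The latter is irreducible because the squaring map $\phi\colon(s_1,\dots,s_n)\mapsto(s_1^2,\dots,s_n^2)$ pulls it back to the union of the $2^n$ affine hyperplanes $H_\varepsilon\colon 1+\sum_k\varepsilon_k s_k=0$, which form a single orbit of the deck group $(\ZZ/2)^n$ acting by sign changes; so $\phi$ restricts to a $(\ZZ/2)^n$-covering $Y\to X$, where $Y$ is the complement of the hyperplane arrangement consisting of the $\{s_k=0\}$ and the $H_\varepsilon$. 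From the exact sequence of this covering, $\pi_1(X)$ is generated by $\pi_1(Y)$ together with lifts of the $n$ deck transformations, and the meridians $\rho_1,\dots,\rho_n$ of the coordinate hyperplanes downstairs are such lifts.

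For the generation statement I would compute $\pi_1(Y)$ by the Zariski--van Kampen method, restricting the arrangement to a generic plane and using a geometric basis of meridians adapted to the sign-change symmetry. A meridian of $\{s_k=0\}$ pushes down to $\rho_k^2$, while a meridian of $H_\varepsilon$ pushes down to a meridian of $\{R(x)=0\}$; since conjugation by $\rho_k$ realizes (up to inner automorphisms) the deck transformation $s_k\mapsto-s_k$, it permutes these meridians according to the regular action of $(\ZZ/2)^n$ on the $\varepsilon$'s, so all of them are conjugate to $\rho_0$ by words in $\rho_1,\dots,\rho_n$. Hence $\pi_1(X)=\langle\rho_0,\rho_1,\dots,\rho_n\rangle$. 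The main obstacle is precisely the bookkeeping of the conjugating paths: one must arrange a geometric basis so that these conjugacies are by words in the $\rho_k$ rather than by arbitrary elements of $\pi_1(Y)$, and it is here that the explicit loops of \cite{Go2} are needed rather than abstract Lefschetz theory --- for a general irreducible affine plane curve (a cuspidal cubic, say) a single meridian fails to generate, so the statement is not formal.

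The commuting relations I would get near the origin. Every factor $1+\sum_k\varepsilon_k\sqrt{x_k}$ has modulus at least $1-1/\sqrt{2}$ on the polydisc $\{|x_k|\le 1/(2n^2)\}$, so that polydisc minus the coordinate hyperplanes lies in $X$, contains $\dot x=(1/(2n^2),\dots,1/(2n^2))$, and deformation retracts onto an $n$-torus. Choosing $\rho_1,\dots,\rho_n$ to lie in this piece (consistently with their definition in \cite{Go2}) exhibits them as the standard generators of $\pi_1((\CC^*)^n,\dot x)\cong\ZZ^n$, so $\rho_i\rho_j=\rho_j\rho_i$.

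The braid relations rest on the fact that $\{x_k=0\}$ is tangent to $\{R(x)=0\}$ along their intersection. Indeed, pairing the factors $1+\sum_{j\ne k}\varepsilon_j\sqrt{x_j}\pm\sqrt{x_k}$ into $(1+\sum_{j\ne k}\varepsilon_j\sqrt{x_j})^2-x_k$ shows $R|_{x_k=0}$ is a perfect square, so near a generic point of $\{x_k=0\}\cap\{R(x)=0\}$ there are local coordinates in which $S$ is $(\{u=0\}\cup\{v^2=u\})\times(\text{polydisc})$, a line and a smooth branch meeting with intersection multiplicity $2$ (an $A_3$ configuration). By Zariski--van Kampen the local fundamental group of the complement is $\langle\alpha,\beta\mid(\alpha\beta)^2=(\beta\alpha)^2\rangle$, where $\alpha,\beta$ are meridians of $\{x_k=0\}$ and $\{R(x)=0\}$. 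Transporting this relation to $\dot x$ along a path that simultaneously carries $\alpha$ to $\rho_k$ and $\beta$ to $\rho_0$ yields $(\rho_0\rho_k)^2=(\rho_k\rho_0)^2$; this is legitimate because the relation is symmetric in $\alpha,\beta$ and invariant under conjugating both by the same element, so only the relative position of the two meridians matters. As with generation, the delicate point is compatibility of the transporting path with the globally fixed choices of $\rho_0$ and $\rho_k$, and verifying this compatibility --- again via the explicit construction of \cite{Go2} --- is where the real work lies.
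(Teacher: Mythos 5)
The paper offers no proof of this proposition: it is quoted verbatim from \cite{Go2} (even the explicit loops $\rho_i$ are deferred there), so there is nothing in-paper to compare against line by line. Judged on its own terms, your skeleton identifies the correct geometry, and the local computations you rely on all check out: $R(x)=0$ is irreducible because the $2^n$ hyperplanes $H_\varepsilon$ upstairs form one orbit of the deck group; on the polydisc $|x_k|\le 1/(2n^2)$ one has $\sum_k\sqrt{|x_k|}\le 1/\sqrt2<1$, so the punctured polydisc retracts to a torus containing $\dot x$ and forces $\rho_i\rho_j=\rho_j\rho_i$; and pairing the factors of $R$ as $(1+\sum_{j\ne k}\varepsilon_j\sqrt{x_j})^2-x_k$ shows $R|_{x_k=0}$ is a perfect square, so $\{x_k=0\}$ and $\{R=0\}$ meet in a tacnode ($A_3$) along a generic point of their intersection, whose local complement has group $\langle\alpha,\beta\mid(\alpha\beta)^2=(\beta\alpha)^2\rangle$ (the $(2,4)$-torus-link group). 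This is exactly the right source for the second family of relations, and your caveat about needing a \emph{single} transporting path for both local meridians is the correct one: the relation survives conjugation of $\alpha$ and $\beta$ by the same element but not by different ones.

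The genuine gap is in the generation statement, and your own hedging locates it but does not close it. What the covering argument gives for free is that $\pi_1(X)$ is generated by $\rho_1,\dots,\rho_n$ together with meridians of the $2^n$ hyperplanes $H_\varepsilon$ (equivalently, by Zariski--van Kampen on a generic line downstairs, by $n+2^{n-1}$ meridians), each of which is conjugate to $\rho_0$ by \emph{some} element of $\pi_1(X)$ --- irreducibility of $R=0$ gives a single conjugacy class, nothing more. Passing from ``conjugate to $\rho_0$'' to ``a word in $\rho_0,\dots,\rho_n$'' is precisely the content of the proposition, and it is not formal: the subgroup generated by one representative of each meridian class can be proper even when the normal closure is everything. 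Your appeal to the deck action only shows that conjugation by $\rho_k$ permutes the meridian classes of the $H_\varepsilon$ up to an \emph{inner automorphism of} $\pi_1(Y)$, and that unknown inner factor is exactly the obstruction. To finish one must either exhibit the conjugating paths explicitly for the concrete loops of \cite{Go2} (e.g.\ by an induction on $n$ via a generic hyperplane section reducing to the $F_4$ case, where the arrangement is two lines and a doubly tangent conic and the braid monodromy can be computed by hand), or choose the geometric basis on the generic line so adapted to the $(\ZZ/2)^n$-symmetry that the braid-monodromy relations themselves express the extra $2^{n-1}-1$ meridians in terms of $\rho_0,\dots,\rho_n$. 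As submitted, the relations are established (modulo the transport compatibility you flag) but the generation claim is still an announcement rather than a proof.
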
 
Let $\mathcal{M}_i$ be the circuit transformation corresponding to the loop 
$\rho_i$ ($i=0,\ldots ,n$). To write down representation matrices of $\mathcal{M}_i$, 
it is convenient to regard $\CC^{2^n}$ as $\CC^2 \otimes \cdots \otimes \CC^2$ and 
take a basis 
\[
\ee_{i_1, \dots, i_n} = \ee_{i_1} \otimes \cdots \otimes \ee_{i_n}, \qquad
\ee_0 = \begin{pmatrix} 1 \\ 0 \end{pmatrix}, \ 
\ee_1 = \begin{pmatrix} 0 \\ 1 \end{pmatrix}. 
\]
We align them in the pure lexicographic order of 
indices $I = (i_1, \dots, i_n) \in \{0,1\}^n$:
$$
(0,\dots,0),\ (1,0,\dots,0), \ (0,1,\dots,0),\  (1,1,\dots,0),\ 
(0,0,1,\dots,0),\ \dots, (1,\dots,1).
$$
We define the tensor product $A\otimes B$ of matrices 
$A$ and $B=(b_{ij})_{\substack{1\le i\le r\\ 1\le j\le s}}$ as 
$$A\otimes B=\begin{pmatrix}
A\, b_{11} & A\, b_{12} &\cdots & A\, b_{1s} \\
A\, b_{21} & A\, b_{22} &\cdots & A\, b_{2s} \\
\vdots  &\vdots   &\ddots & \vdots  \\
A\, b_{r1} & A\, b_{r2} &\cdots & A\, b_{rs} \\
\end{pmatrix}, 
$$
and we put
\begin{align*}
  \alpha =\exp (2\pi \sqrt{-1} a),\quad 
  \beta =\exp (2\pi \sqrt{-1} b),\quad 
  \gamma_k =\exp (2\pi \sqrt{-1} c_k) \qquad (k=1,\ldots ,n).
\end{align*}
Regarding $\alpha, \beta, \gamma_k$ just as symbols, we define an isomorphism
${}^{\vee} : \QQ(\alpha, \beta, \gamma_k) \rightarrow \QQ(\alpha, \beta, \gamma_k)$
of a rational function field by $\alpha \mapsto \alpha^{-1}$, $\beta \mapsto \beta^{-1}$, 
$\gamma_k \mapsto \gamma_k^{-1}$. If $a, b, c_k \in \RR$, then ${}^{\vee}$ is 
nothing but the complex conjugation. With these notations, we have
\begin{prop}[(\cite{Go2}, \cite{GM2})]\label{prop-rep-matrix}
  For a certain basis of the solution space to $E_C(a,b,c)$, 
  the representation matrix $M_i$ of $\mathcal{M}_i$ $(i=0, \dots ,n)$
  is written as follows. 
  For $k=1,\dots, n$, we have 
  \begin{align*}
    M_k =E_2 \otimes \cdots \otimes E_2 \otimes \underset{k\textrm{-th}}{G_k} 
    \otimes E_2 \otimes \cdots \otimes E_2 ,
    \quad 
    G_k =\left(
      \begin{array}{cc}
        1 & -\gamma_k^{-1} \\ 0 & \gamma_k^{-1}
      \end{array}
      \right).
  \end{align*}
  The matrix $M_0$ is written as 
  \begin{align*}
    M_0 =E_{2^n} -N_0 ,\quad 
    N_0 ={}^t (\mathbf{0}, \dots ,\mathbf{0},v) ,
  \end{align*}
  where $v \in \CC^{2^n}$ is a column vector whose $I$-th entry is 
  \begin{align*}
    \left\{ 
      \begin{array}{ll}
        (-1)^n \frac{(\alpha-1)(\beta-1)\prod_{k=1}^n \gamma_k}{\alpha \beta}& 
        (I=(0,\dots ,0)) ,\\
        (-1)^{n+|I|} \frac{(\alpha \beta +(-1)^{|I|} \prod_{k=1}^n \gamma_k^{i_k})
          \prod_{k=1}^n \gamma_k^{1-i_k}}{\alpha \beta}& (I\neq(0,\dots ,0)) .
      \end{array}
    \right.
  \end{align*}
  Further, the intersection matrix $H=(H_{I,I'})$ defined as 
  \begin{align*}
    H_{I,I'}=\left\{
      \begin{array}{ll}
        \displaystyle
        \prod_{k=1}^n (-\gamma_k)^{i'_k}(1-\gamma_k)^{1-i_k-i'_k} 
        \cdot \frac{\alpha -1}{\alpha -\prod_{k=1}^n  \gamma_k} 
        & (I\cdot I'=(0,\dots ,0)), \\
        \displaystyle
        \frac{\alpha \beta +(-1)^{|I\cdot I'|}\prod_{k=1}^n \gamma_k^{i_k i'_k}}
        {( \alpha -\prod_{k=1}^n  \gamma_k ) (\beta -1)}
        \cdot \prod_{k=1}^n (-\gamma_k)^{i'_k(1-i_k)} (1-\gamma_k)^{(1-i_k)(1-i'_k)} 
        & (\textrm{otherwise}) 
      \end{array}
      \right.
  \end{align*}
  satisfies ${}^t M_i \cdot H \cdot M_i^{\vee} =H$ and ${}^t H =(-1)^n H^{\vee}$. 
  Here, $I\cdot I'=(i_1 i'_1 ,\dots ,i_n i'_n )$, $|I| = i_1 + \cdots + i_n$ for 
  $I=(i_1,\dots ,i_n)$ and $I'=(i'_1,\dots ,i'_n)$. 
\end{prop}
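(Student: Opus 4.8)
Since the statement assembles the circuit matrices and the twisted intersection form of \cite{Go2, GM2}, the plan is to recover them from the twisted homology attached to the Euler‑type integral of the previous Proposition. Following \cite{Go2} I would first identify the solution space of $E_C(a,b,c)$ with the twisted homology group $H_n(T_x;\mathcal{L}_x)$, where $T_x\subset\CC^n$ is obtained from the $t$‑space by deleting the zero loci of $t_1,\dots,t_n$, of $1-\sum_k t_k$ and of $1-\sum_k x_k/t_k$, and $\mathcal{L}_x$ is the rank‑one local system determined by $u(t,x)=\prod_k t_k^{-c_k}\cdot(1-\sum_k t_k)^{\sum_k c_k-a-n}\cdot(1-\sum_k x_k/t_k)^{-b}$, a solution of $E_C$ being $\int_\Gamma u\,dt$. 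Near $x=0$ the solution space has the product form $\bigotimes_{k=1}^n W_k$, with $W_k$ the two‑dimensional space spanned by the local solutions having exponent $0$ and exponent $1-c_k$ along $x_k=0$; I would fix a basis of twisted cycles $\{\gamma_I\}_{I\in\{0,1\}^n}$ adapted to this decomposition, order it lexicographically, and match $\gamma_I$ with $\ee_I=\ee_{i_1}\otimes\cdots\otimes\ee_{i_n}$, so that the asserted tensor shapes become meaningful.

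With this normalisation the matrices $M_k$ ($k\ge1$) are essentially one‑dimensional: the loop $\rho_k$ keeps $x_j$ ($j\ne k$) fixed and only drags the divisor $1-\sum_j x_j/t_j=0$ once around $x_k=0$, so $\mathcal{M}_k$ acts as the identity on every factor $W_j$ with $j\ne k$, and on $W_k$ it is the circuit matrix of an ordinary one‑variable $_2F_1$‑type integral whose only active exponents are $-c_k$ at $t_k=0$ and $-b$ at the moving point $t_k\approx x_k$. Tracking the regularisation of the chamber meeting $t_k=0$ gives on $W_k$ the upper‑triangular matrix with diagonal $(1,\gamma_k^{-1})$ and off‑diagonal entry $-\gamma_k^{-1}$, i.e.\ $G_k$; hence $M_k=E_2\otimes\cdots\otimes G_k\otimes\cdots\otimes E_2$, and the relations $M_iM_j=M_jM_i$ for $i,j\ge1$ are immediate.

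For $M_0$ I would use that $\rho_0$ encircles $R(x)=0$ near $(1/n^2,\dots,1/n^2)$, where exactly the factor $1-\sum_k\sqrt{x_k}$ of $R(x)$ vanishes; this is a single nodal degeneration of $H_n(T_x;\mathcal{L}_x)$, so by the twisted Picard–Lefschetz formula $\mathcal{M}_0=\mathrm{id}-\langle\,\cdot\,,\delta'\rangle\,\delta$ for a vanishing cycle $\delta$ and a covanishing class $\delta'$, a rank‑one perturbation of the identity and therefore a reflection in the sense of \cite{BH}. This already yields $M_0=E_{2^n}-N_0$ with $\operatorname{rank}N_0=1$. Since the vanishing cycle turns out to be, up to a scalar, the last basis vector $\ee_{(1,\dots,1)}$, the only nonzero row of $N_0$ is the bottom one, i.e.\ $N_0={}^t(\mathbf{0},\dots,\mathbf{0},v)$; pairing $\delta$ against each $\gamma_I$ via the explicit twisted intersection numbers then produces the entries of $v$, the signs $(-1)^{n+|I|}$ coming from the orientation and ordering of the $n$‑simplices and the $\alpha\beta$ in the denominators from the local exponents $a,b$ at the colliding divisors.

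Finally, $H$ is the matrix of the twisted intersection pairing between the $\gamma_I$ and the corresponding cycles loaded with $\mathcal{L}_x^{-1}$, which I would compute cell by cell: when the loaded simplices $\gamma_I$ and $\gamma_{I'}$ overlap only at the origin one gets the factor $\frac{\alpha-1}{\alpha-\prod_k\gamma_k}$ times the one‑variable contributions $(-\gamma_k)^{i'_k}(1-\gamma_k)^{1-i_k-i'_k}$, and otherwise the factor $\frac{\alpha\beta+(-1)^{|I\cdot I'|}\prod_k\gamma_k^{i_ki'_k}}{(\alpha-\prod_k\gamma_k)(\beta-1)}$ times the remaining one‑variable intersection numbers. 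The identity ${}^tH=(-1)^nH^{\vee}$ is then the graded symmetry of an intersection form on $n$‑cycles — the sign $(-1)^{n^2}=(-1)^n$ together with the interchange $\mathcal{L}_x\leftrightarrow\mathcal{L}_x^{-1}$, which is precisely the action of ${}^{\vee}$ — and ${}^tM_i\cdot H\cdot M_i^{\vee}=H$ expresses that each $\mathcal{M}_i$ is a monodromy automorphism of this pairing; in practice I would simply verify both identities by substituting the matrices already found. The main obstacle is the bookkeeping: pinning down the precise vanishing cycle $\delta$ and the diagonal (self‑intersection) entries of $H$, and keeping every sign $(-1)^{n+|I|}$, $(-1)^{|I\cdot I'|}$ and every branch factor $\gamma_k^{\pm i_k}$, $\gamma_k^{\pm(1-i_k)}$ mutually consistent across the $2^n$ chambers. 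A safer alternative is to take the asserted $M_i$ and $H$ as an ansatz and check directly that the $M_i$ define a representation of $\pi_1(X,\dot{x})$ with the stated relations and correct local exponents, and that $H$ is invariant in the required sense.
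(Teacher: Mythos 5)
You should first note that the paper itself offers no proof of this proposition: it is imported verbatim from \cite{Go2} and \cite{GM2}, and the surrounding text only ``briefly reviews'' those results. Your outline does reconstruct the strategy actually used in those references --- identify the solution space with the twisted homology $H_n(T_x;\mathcal{L}_x)$ of the complement of the divisors $t_k=0$, $1-\sum t_k=0$, $1-\sum x_k/t_k=0$; read off the $M_k$ ($k\ge 1$) from the local exponents $0$ and $1-c_k$ along $x_k=0$ (consistent with $G_k$ having eigenvalues $1$ and $\gamma_k^{-1}$); obtain $M_0$ as a reflection from the single vanishing factor $1-\sum\sqrt{x_k}$ of $R(x)$ at $(1/n^2,\dots,1/n^2)$ via a twisted Picard--Lefschetz formula; and realize $H$ as the twisted intersection pairing, whose monodromy invariance gives ${}^tM_i\,H\,M_i^{\vee}=H$. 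All of these qualitative points are correct and match the cited sources (e.g.\ the image of $N_0$ is indeed $\CC\,\ee_{1,\dots,1}$, as recorded in Remark \ref{rem-M0}).

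The gap is that the proposition's entire content is the explicit formulas, and your argument never produces them. The steps ``tracking the regularisation of the chamber meeting $t_k=0$ gives $G_k$'', ``pairing $\delta$ against each $\gamma_I$ then produces the entries of $v$'', and the cell-by-cell evaluation of $H_{I,I'}$ are exactly the computations that constitute the proof in \cite{Go2} and \cite{GM2}; asserting that they yield the stated expressions is not a derivation, and the signs $(-1)^{n+|I|}$, $(-1)^{|I\cdot I'|}$ and the branch factors $\gamma_k^{\pm i_k}$ are precisely where such a computation can go wrong. Your proposed fallback --- take the stated $M_i$ and $H$ as an ansatz and check the $\pi_1$-relations, local exponents, and invariance of $H$ --- is also insufficient in principle: those constraints do not determine the representation matrices uniquely (they are satisfied by many non-conjugate choices of the rank-one part of $M_0$), so passing such a check would show consistency but not that these are the monodromy matrices of $E_C(a,b,c)$. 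To close the argument you would either have to carry out the twisted-cycle intersection computations in full, or do what the paper does and cite \cite{Go2} and \cite{GM2} for them.
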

\begin{rem} \label{rem-M0}
 {\rm (1)} \ In particular, $M_0$ is lower triangular and $M_1,\dots ,M_n$ 
are upper triangular.
\\
{\rm (2)} Proposition \ref{prop-rep-matrix} implies that  
$\ee_{1,\dots,1} =\ee_1 \otimes \cdots \otimes \ee_1$ is an eigenvector of $M_0$, 
that is,  
$$
M_0 \ee_{1,\dots,1} = \delta_0 \ee_{1,\dots,1} ,\quad 
\delta_0  = (-1)^{n+1} \frac{\gamma_1 \cdots \gamma_n}{\alpha \beta} .
$$
In \cite{GM2}, it is also shown that 
the eigenspace of $M_0$ with eigenvalue $1$ is expressed as
$$
\ker N_0 
=\{ w\in \CC^{2^n} \mid {}^t w H \ee_{1,\dots,1} =0\}.
$$ 
The matrix $M_0$ is a ``reflection'' defined later, with the special eigenvalue $\delta_0$. 
\end{rem}
For $I \in \{0,1\}^n$, we put $M^I = M_1^{i_1} M_2^{i_2} \cdots M_n^{i_n}$. 
By using these notations, we have 
$$
M^I \ee_{j_1,\dots,j_n} 
= (G_1^{i_1} \ee_{j_1}) \otimes \cdots \otimes (G_n^{i_n} \ee_{j_n}). 
$$
We often use the vectors 
$$
 \ff_I = M^I \ee_{1,\dots,1} 
= (G_1^{i_1} \ee_1) \otimes \cdots \otimes (G_n^{i_n} \ee_1) ,\quad 
I=(i_1,\ldots,i_n) \in \{0,1\}^n. 
$$
Let $\Mon$ be the monodromy group generated by $M_0,M_1,\ldots,M_n$, 
which is a subgroup of $\GL_{2^n}(\CC)\simeq \GL((\CC^2)^{\otimes n})$. 
\begin{prop}[(\cite{HT}, \cite{GM2})] \label{prop-irr}
  We assume 
  \begin{align*}
    ({\rm irr-abc}) \qquad \qquad 
    a-\sum_{k=1}^n i_k c_k ,\quad b-\sum_{k=1}^n i_k c_k \not\in \ZZ ,\qquad 
    \forall I=(i_1,\ldots,i_n)\in \{0,1\}^n, 
  \end{align*}
  or equivalently, 
  \begin{align*}
    ({\rm irr-}\alpha \beta \gamma) \qquad \qquad 
    \alpha-\prod_{k=1}^n \gamma_k^{i_k} ,\quad 
    \beta-\prod_{k=1}^n \gamma_k^{i_k} \neq 0 ,\qquad 
    \forall I=(i_1,\ldots,i_n) \in \{0,1\}^n.
  \end{align*}
Then we have
\\
{\rm (1)} \ The vectors $\ff_I$ $(I=(i_1,\ldots,i_n) \in \{0,1\}^n)$ form 
    a basis of $\CC^{2^n}\cong (\CC^2)^{\otimes n}$.
\\
{\rm (2)} \ The monodromy group $\Mon$ acts on $\CC^{2^n}$ irreducibly. 
\end{prop}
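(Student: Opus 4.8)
The plan is to prove the two assertions in order, with (1) giving (2) almost for free. For (1), I would show that the $2^n$ vectors $\ff_I = (G_1^{i_1}\ee_1)\otimes\cdots\otimes(G_n^{i_n}\ee_1)$ are linearly independent. Since these vectors are pure tensors indexed by $I\in\{0,1\}^n$, the natural move is to use the fact that in a tensor product $(\CC^2)^{\otimes n}$, a family of pure tensors $\bigotimes_k u_k^{(i_k)}$ is a basis provided that for each fixed slot $k$ the two vectors $u_k^{(0)}=\ee_1$ and $u_k^{(1)}=G_k\ee_1$ are linearly independent in $\CC^2$. Concretely, $G_k\ee_1 = \begin{pmatrix} -\gamma_k^{-1}\\ \gamma_k^{-1}\end{pmatrix}$, which is proportional to $\begin{pmatrix}-1\\1\end{pmatrix}$, and this is not proportional to $\ee_1=\begin{pmatrix}0\\1\end{pmatrix}$; so $\{\ee_1, G_k\ee_1\}$ is a basis of $\CC^2$ for every $k$, with no condition on the parameters needed at all. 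Taking tensor products of these $n$ bases yields a basis of $(\CC^2)^{\otimes n}$, which is exactly $\{\ff_I\}$. (So in fact part (1) holds unconditionally — the hypothesis (irr-$\alpha\beta\gamma$) is only needed for part (2).)

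For (2), I would argue that any nonzero $\Mon$-invariant subspace $W\subseteq\CC^{2^n}$ must be everything. The key structural input is Remark \ref{rem-M0}: $\ee_{1,\dots,1}=\ff_{(0,\dots,0)}$ is an eigenvector of $M_0$ with eigenvalue $\delta_0\neq 1$, while $M_0 = E_{2^n}-N_0$ acts as the identity on the hyperplane $\ker N_0 = \{w : {}^t w H\,\ee_{1,\dots,1}=0\}$. Thus $M_0$ is a pseudo-reflection whose nontrivial $(-1)$-or-rather-$\delta_0$-eigenline is spanned by $\ee_{1,\dots,1}$. First I would show $\ee_{1,\dots,1}\in W$: if $W\not\subseteq\ker N_0$, pick $w\in W\setminus\ker N_0$; then $M_0 w - w = -N_0 w$ is a nonzero scalar multiple of $\ee_{1,\dots,1}$ (since $N_0$ has image spanned by $v$, equivalently by $\ee_{1,\dots,1}$ after noting the eigenvector relation), hence $\ee_{1,\dots,1}\in W$. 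On the other hand, if $W\subseteq\ker N_0$, then $M_0|_W=\mathrm{id}$, and one gets a contradiction from the next step applied to the complementary invariant subspace (or by a dimension/trace argument). Then, applying the abelian generators: since $\ee_{1,\dots,1}\in W$ and $W$ is $\Mon$-invariant, $W$ contains $M^I\ee_{1,\dots,1}=\ff_I$ for every $I\in\{0,1\}^n$, and by part (1) these span $\CC^{2^n}$, so $W=\CC^{2^n}$.

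The one genuinely delicate point is handling the case $W\subseteq\ker N_0$, i.e. ruling out a proper invariant subspace on which $M_0$ acts trivially. The cleanest route is to use the nondegenerate bilinear/sesquilinear pairing $H$ from Proposition \ref{prop-rep-matrix}: from ${}^t M_i\,H\,M_i^\vee = H$ one checks that if $W$ is $\Mon$-invariant then the $H$-orthogonal complement $W^{\perp_H}$ (with respect to the appropriately twisted pairing, matching $M\mapsto {}^tM^{-1,\vee}$) is also invariant. If $W\subseteq\ker N_0$ is nonzero and proper, then $W^{\perp_H}$ is a nonzero invariant subspace not contained in $\ker N_0$ (because $\ker N_0$ is a hyperplane and $\ee_{1,\dots,1}\notin\ker N_0$, so $\ee_{1,\dots,1}$ pairs nontrivially against $\ker N_0$ — one must check $\ee_{1,\dots,1}$ is not $H$-isotropic against all of $\ker N_0$, which follows from $\ker N_0 = (\ee_{1,\dots,1})^{\perp_H}$ being a proper hyperplane and $H$ nondegenerate). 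Then the first step applies to $W^{\perp_H}$, forcing $\ee_{1,\dots,1}\in W^{\perp_H}$, hence $W^{\perp_H}=\CC^{2^n}$, hence $W=0$, a contradiction. I expect this orthogonality bookkeeping — keeping straight the $\vee$-twist in the invariance of $H$ and therefore in the definition of the invariant complement — to be the main obstacle; everything else is the standard Beukers–Heckman-style argument that a reflection plus enough extra symmetry generates an irreducible group.
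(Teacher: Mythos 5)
The paper itself gives no proof of this proposition --- it is imported from \cite{HT} and \cite{GM2} --- so I am judging your argument on its own terms and against the machinery the paper does set up (Remark \ref{rem-M0} and Lemma \ref{lem-NI}). Part (1) is correct, and your observation that it needs no hypothesis is right: $G_k\ee_1=\gamma_k^{-1}\,{}^t(-1,1)$ is never proportional to $\ee_1$, so each slot carries a basis of $\CC^2$ and the pure tensors $\ff_I$ form a basis of $(\CC^2)^{\otimes n}$. The skeleton of part (2) --- force $\ee_{1,\dots,1}$ into a nonzero invariant subspace $W$, then sweep with the commuting $M^I$ and invoke (1) --- is the standard and correct route. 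One small slip there: the image of $N_0$ is $\CC\ee_{1,\dots,1}$ because $N_0={}^t(\mathbf{0},\dots,\mathbf{0},v)$ has ${}^tv$ as its last \emph{row}, so $N_0w=({}^tv w)\,\ee_{1,\dots,1}$; it is not ``spanned by $v$''. Your conclusion is right, but the parenthetical suggests a transposition confusion.

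The genuine problem is in the case $W\subseteq\ker N_0$. You justify $W^{\perp_H}\not\subseteq\ker N_0$ by asserting $\ee_{1,\dots,1}\notin\ker N_0$. That is false whenever $\delta_0=1$: the $(1,\dots,1)$-entry of $v$ is $1-\delta_0$, so ${}^tv\,\ee_{1,\dots,1}=0$ and $M_0$ is a transvection fixing $\ee_{1,\dots,1}$. The condition $(\mathrm{irr}$-$\alpha\beta\gamma)$ does not exclude $\delta_0=1$, and this is exactly the case the paper cares most about ($a=b=1/2$, $c_k=1$, $n$ odd --- the symplectic case). The adjacent claim that $\ee_{1,\dots,1}$ ``pairs nontrivially against $\ker N_0$'' also contradicts the identity $\ker N_0=\{w:{}^twH\ee_{1,\dots,1}=0\}$ you quote in the same sentence. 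Fortunately the step is repairable, and more directly than you propose: that identity says $W\subseteq\ker N_0$ holds \emph{if and only if} $\ee_{1,\dots,1}$ already lies in $W^{\perp}=\{u:{}^twHu=0,\ \forall w\in W\}$; from ${}^tM_iHM_i^{\vee}=H$ one gets $HM_i^{\vee}={}^t(M_i^{-1})H$, so $W^{\perp}$ is invariant under the group generated by the $M_i^{\vee}$ and therefore contains all $(M^{\vee})^I\ee_{1,\dots,1}=\ff_I^{\vee}$, which form a basis by the same slot-wise argument; nondegeneracy of $H$ (an input you should state, established in \cite{GM2}) then forces $W=\mathbf{0}$. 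Equivalently, and closer to the paper's own toolkit, $\Mon$-invariance gives $N_IW=M^IN_0(M^I)^{-1}W=M^IN_0W=\mathbf{0}$ for every $I$, so $W\subseteq\bigcap_I\ker N_I=\ker\nu=\mathbf{0}$ by Lemma \ref{lem-NI}(2). With that replacement your proof is complete.
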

We consider the case of $a=b=\frac{1}{2}$, $c_k=1$ (i.e., $\alpha=\beta=-1$, $\gamma_k=1$) 
in detail.
\begin{cor} \label{cor-rep-matrix}
{\rm (1)} Assume that $a, b, c_k \in \RR$. Since ${}^{\vee}$ means the complex conjugation, 
the intersection matrix $H$ is a Hermitian matrix if $n$ is even and a skew Hermitian 
matrix if $n$ is odd. Further assume that 
$a + b \in \ZZ, \ c_k \in \frac{1}{2} \ZZ$ and $\sum_{k=1}^n c_k \in \ZZ$
{\rm (}that is, $\alpha = \overline{\beta}, \ \gamma_k = \pm 1$ and 
$\prod_{k=1}^n \gamma_k = 1$ {\rm )}. Then $M_k$ and $H$ are defined over $\RR$. In this case, 
the monodromy group $\Mon$ is a subgroup of a real orthogonal/symplectic group 
with respect to $H$. 
\\
{\rm (2)}
  In the case of $a=b=\frac{1}{2}$, $c_k=1$, the representation matrices are as follows. 
  For $k=1,\dots, n$, we have 
  \begin{align*}
    M_k =E_2 \otimes \cdots \otimes E_2 \otimes \underset{k\textrm{-th}}{G_k}
    \otimes E_2 \otimes \cdots \otimes E_2 ,
    \quad 
    G_k =\left(
      \begin{array}{cc}
        1 & -1 \\ 0 & 1
      \end{array}
      \right) .
  \end{align*}
  The $(I,I')$-entry of $M_0$ is 
  \begin{align*}
    \left\{ 
      \begin{array}{ll}
        1 & (I=I' \neq (1,\dots,1)), \\
        (-1)^{n+1} & (I=I'=(1,\dots,1)) ,\\
        (-1)^{n+1} \cdot 4& (I=(1,\dots,1), I'=(0,\dots ,0)) ,\\
        (-1)^{n+1}\cdot 2 & (I=(1,\dots,1),I'\neq(0,\dots ,0),(1,\dots,1) 
                             \textrm{ and } |I'|\equiv 0\mod 2) \\
        0 & (\textrm{otherwise}).
      \end{array}
    \right.
  \end{align*}
  We have ${}^t M_i \cdot H \cdot M_i =H$ and ${}^t H =(-1)^n H$.  
  Each entry of $H$ belongs to $\ZZ[\frac{1}{2}]$. 
\end{cor}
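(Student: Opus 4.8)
The plan is to derive everything in Corollary~\ref{cor-rep-matrix} as a direct specialization of Proposition~\ref{prop-rep-matrix}, so the work is essentially bookkeeping with the substitution $\alpha=\beta=-1$, $\gamma_k=1$ (equivalently $a=b=\tfrac12$, $c_k=1$). First, for part~(1), I would observe that when $a,b,c_k\in\RR$ the symbol ${}^{\vee}$ is complex conjugation, so the identity ${}^tH=(-1)^nH^{\vee}$ from Proposition~\ref{prop-rep-matrix} reads ${}^tH=(-1)^n\overline{H}$, i.e.\ $\overline{H}^{\,t}=(-1)^n H$; this is exactly Hermitian for $n$ even and skew-Hermitian for $n$ odd. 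Under the extra hypotheses $\alpha=\overline\beta$, $\gamma_k=\pm1$, $\prod_k\gamma_k=1$, every entry of $H$ in the displayed formula is manifestly invariant under ${}^{\vee}$ (the factors $(-\gamma_k)^{\bullet}$, $(1-\gamma_k)^{\bullet}$ are real, and $\alpha\beta=\alpha\overline\alpha=|\alpha|^2$, $\alpha-\prod\gamma_k=\alpha-1$ is conjugated to $\overline\alpha-1=\beta-1$, etc.), so $H^{\vee}=H$, i.e.\ $H$ is real; similarly $G_k=\bigl(\begin{smallmatrix}1&-\gamma_k^{-1}\\0&\gamma_k^{-1}\end{smallmatrix}\bigr)$ is real since $\gamma_k=\pm1$, hence each $M_k$ is real. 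Combined with the invariance relation ${}^tM_i\,H\,M_i^{\vee}=H$, which now becomes ${}^tM_i\,H\,M_i=H$, this exhibits $\Mon$ inside the real isometry group of the bilinear form $H$, orthogonal when $n$ is even and symplectic when $n$ is odd.

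For part~(2), I would specialize the matrices. The $M_k$ formula is immediate: $\gamma_k=1$ turns $G_k$ into $\bigl(\begin{smallmatrix}1&-1\\0&1\end{smallmatrix}\bigr)$. For $M_0=E_{2^n}-N_0$ with $N_0={}^t(\mathbf 0,\dots,\mathbf 0,v)$, only the last row is nonzero, so the $(I,I')$-entry of $M_0$ equals $\delta_{I,I'}$ unless $I=(1,\dots,1)$, and for $I=(1,\dots,1)$ it equals $\delta_{(1,\dots,1),I'}-v_{I'}$. Then I just plug $\alpha=\beta=-1$, $\gamma_k=1$ into the entries of $v$: the top entry becomes $(-1)^n\frac{(-2)(-2)\cdot 1}{1}=(-1)^n\cdot 4=(-1)^{n+1}\cdot(-4)$, so the $(I,I')=((1,\dots,1),(0,\dots,0))$ entry of $M_0$ is $0-(-1)^n4=(-1)^{n+1}4$; for $I'\neq(0,\dots,0)$ the entry $v_{I'}$ is $(-1)^{n+|I'|}\frac{(1+(-1)^{|I'|})\cdot 1}{1}=(-1)^{n+|I'|}(1+(-1)^{|I'|})$, which is $0$ when $|I'|$ is odd and $(-1)^{n+|I'|}\cdot 2=(-1)^n\cdot2$ when $|I'|$ is even; in particular for $I'=(1,\dots,1)$ (so $|I'|=n$) we get $v_{(1,\dots,1)}=(-1)^{2n}\cdot(1+(-1)^n)$, hmm — careful: I must instead read off $\delta_0=(-1)^{n+1}\frac{\gamma_1\cdots\gamma_n}{\alpha\beta}=(-1)^{n+1}\frac{1}{1}=(-1)^{n+1}$ from Remark~\ref{rem-M0}(2), which directly gives the $(I,I')=((1,\dots,1),(1,\dots,1))$ entry as $(-1)^{n+1}$, consistent with $1-v_{(1,\dots,1)}$. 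So the diagonal $((1,\dots,1),(1,\dots,1))$ entry is $(-1)^{n+1}$, the off-diagonal entries in the last row are $(-1)^{n+1}4$ at column $(0,\dots,0)$, $(-1)^{n+1}2=(-1)^n\cdot(-2)$ — I will double-check the sign by matching $1-v_{I'}=1-(-1)^n2$; since $(-1)^{n+1}2 \ne 1-(-1)^n 2$ in general, the right reading is that the off-diagonal last-row entry is $-v_{I'}=-(-1)^n2=(-1)^{n+1}2$ only if the $\delta_{I,I'}$ term vanishes, which it does for $I'\neq(1,\dots,1)$, so indeed the entry is $(-1)^{n+1}2$ for $|I'|$ even, $I'\neq(0,\dots,0),(1,\dots,1)$, and $0$ for $|I'|$ odd. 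This reproduces the claimed table.

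Finally, the identities ${}^tM_i\,H\,M_i=H$ and ${}^tH=(-1)^nH$ follow from part~(1) (or equally from Proposition~\ref{prop-rep-matrix} with ${}^{\vee}=\mathrm{id}$ on this parameter locus), and the statement that each entry of $H$ lies in $\ZZ[\tfrac12]$ is checked by specializing the $H_{I,I'}$ formula: with $\gamma_k=1$ the factors $(1-\gamma_k)^{1-i_k-i'_k}$ and $(1-\gamma_k)^{(1-i_k)(1-i'_k)}$ are dangerous (they look like $0^{\text{power}}$), so the real point is that these limits must be taken along with the $\frac{1}{\alpha-\prod\gamma_k}$ factor — which is why the corollary is phrased via the ``Euler-type integral with integer $c_k$'' of the earlier proposition rather than by naive substitution. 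The cleanest route I would take here is: the monodromy matrices $M_i$ depend rationally (in fact, after clearing denominators, polynomially) on $\alpha,\beta,\gamma_k$ near this point — visible already from the explicit $M_0,M_k$ — and the intersection form is the one making $\Mon$ an isometry group; one then computes $H$ directly in the limit, e.g.\ by solving ${}^tM_i\,H\,M_i=H$ for the specialized $M_i$, and observes all entries come out in $\ZZ[\tfrac12]$ (denominators being powers of $2$ from the $\alpha\beta=1$, $\beta-1=-2$, $\alpha-1=-2$ factors after cancellation). The main obstacle is precisely this last point: making rigorous sense of the degenerate $(1-\gamma_k)$ factors in $H_{I,I'}$ as $\gamma_k\to1$, i.e.\ verifying that the apparent singularities cancel and that the resulting limit is a well-defined real matrix with $\ZZ[\tfrac12]$-entries; I would handle it by noting that $H$ is characterized up to scalar by the isometry condition together with $\dim\Mon$-irreducibility (Proposition~\ref{prop-irr} does not apply at these special parameters, so one instead normalizes $H$ by a single entry, say the $(I,I)$-entry with $I=(1,\dots,1)$, and propagates), which makes the limit computation purely linear-algebraic and denominator-controlled.
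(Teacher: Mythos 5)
Your part (1) and your specialization of $M_k$ and $M_0$ are correct and follow exactly the route the paper intends: the corollary is a direct substitution of $\alpha=\beta=-1$, $\gamma_k=1$ into Proposition \ref{prop-rep-matrix}, and your last-row bookkeeping for $M_0=E_{2^n}-N_0$ reproduces the stated table (including the diagonal entry $\delta_0=(-1)^{n+1}$).

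The problem is your treatment of $H$. The difficulty you describe --- ``degenerate $(1-\gamma_k)$ factors'' requiring a limit or a characterization of $H$ by the isometry condition --- does not exist. In the first case of the formula for $H_{I,I'}$ the hypothesis $I\cdot I'=(0,\dots,0)$ forces $i_k i'_k=0$, hence $i_k+i'_k\le 1$ and the exponent $1-i_k-i'_k$ lies in $\{0,1\}$; in the second case the exponent $(1-i_k)(1-i'_k)$ also lies in $\{0,1\}$. So $H_{I,I'}$ is polynomial in the quantities $1-\gamma_k$ (with the convention $x^0=1$), there are no negative powers, and since $\alpha-\prod_k\gamma_k=-2$ and $\beta-1=-2$ are nonzero at these parameters, naive substitution is legitimate. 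It gives $H_{I,I'}\in\{0,\pm 1\}$ in the first case and $H_{I,I'}=\frac{1+(-1)^{|I\cdot I'|}}{4}\cdot(0\text{ or }\pm 1)\in\{0,\pm\tfrac12\}$ in the second, which is the entire content of the claim that the entries lie in $\ZZ[\tfrac12]$. Moreover, your proposed workaround rests on a false premise: the irreducibility condition of Proposition \ref{prop-irr} \emph{does} hold at $\alpha=\beta=-1$, $\gamma_k=1$, since $\alpha-\prod_k\gamma_k^{i_k}=-2\neq 0$ for every $I$ (the paper uses irreducibility at exactly these parameters in Section 3). Replace your final paragraph by the two-line exponent check above and the proof is complete.
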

\subsection{Zariski closure and Reflection group}
Let us consider the Zariski closure of the monodromy group $\Mon$ 
after Beukers and Heckman. As in \cite{BH}, we call a linear map $g \in \GL_n(\CC)$ 
a {\it reflection} if $g - \mathrm{Id}$ has rank one 
(Hence a reflection may be of infinite order, and our reflections include matrices 
called {\it transvection}). We call the determinant of a reflection $g$ the 
{\it special eigenvalue} of $g$. 
For a subgroup $G \subset \GL_n(\CC)$, the Zariski closure of $G$ over complex numbers 
is denoted by $\overline{G}$. The connected component (in the Zariski topology) of the 
identity, which is a normal subgroup, is denoted by $G^0$. The quotient group 
$G/G^0 \cong \overline{G} / \overline{G}^0$ is a finite group. 
We apply the following Proposition with $r = M_0$ (see Remark \ref{rem-M0}).   
\begin{prop}[(\cite{BH})] \label{BH-prop}
Suppose $G \subset \SL_n(\CC)$ is a connected algebraic group 
acting irreducibly on $\CC^n$. Let $r \in \GL_n(\CC)$ be a reflection with special 
eigenvalue $\delta \in \CC^{\times}$ which normalizes $G$. Then we have the following 
three possibilities,
\\
{\rm (I)} If $\delta \ne \pm 1$ then $G = \SL_n(\CC)$,
\\
{\rm (II)} If $\delta = +1$ then $G = \SL_n(\CC)$ or $G = \Sp_n(\CC)$,
\\
{\rm (III)} If $\delta = -1$ then $G = \SL_n(\CC)$ or $G = \SO_n(\CC)$.
\end{prop}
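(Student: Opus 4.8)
\medskip
\noindent\textit{Proof idea.}
The plan is to pass to the Lie algebra $\mathfrak{g}=\mathrm{Lie}(G)\subseteq\mathfrak{sl}_n(\CC)$ and to identify it. First, $G$ is semisimple: its unipotent radical would have a nonzero fixed subspace of $V:=\CC^n$, which is $G$-stable, hence all of $V$ by irreducibility, so the unipotent radical is trivial; and by Schur's lemma the connected center of $G$ acts by scalars, but the scalar matrices in $\SL_n(\CC)$ form a finite group, so the connected center is trivial. Thus $\mathfrak{g}$ is semisimple and acts irreducibly on $V$, and since $r$ normalizes $G$, $\mathrm{Ad}(r)$ is a Lie algebra automorphism of $\mathfrak{g}$. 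By Schur, $V$ carries at most one $\mathfrak{g}$-invariant nondegenerate bilinear form up to scalar, necessarily symmetric or alternating, so there are three mutually exclusive possibilities: $\mathfrak{g}$ preserves no form, $\mathfrak{g}\subseteq\mathfrak{so}(V)$, or $\mathfrak{g}\subseteq\mathfrak{sp}(V)$. It then suffices to prove that in these cases $\mathfrak{g}$ is all of $\mathfrak{sl}(V)$, $\mathfrak{so}(V)$, or $\mathfrak{sp}(V)$ (whence $G=\SL_n(\CC)$, $\SO_n(\CC)$, or $\Sp_n(\CC)$, these being the connected subgroups of $\SL_n(\CC)$ with these Lie algebras), and to decide which value of $\delta$ is compatible with each.

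The next step is to extract a distinguished element of $\mathfrak{g}$ from $r$. In a basis with $r=\mathrm{diag}(\delta,1,\dots,1)$, a direct computation shows that $\mathrm{Ad}(r)$ on $\mathrm{End}(V)$ has eigenvalues $1,\delta,\delta^{-1}$: the $\delta$- and $\delta^{-1}$-eigenspaces are spanned by the matrix units $\{E_{1j}\}_{j\ge2}$ and $\{E_{j1}\}_{j\ge2}$ respectively, hence consist of rank $\le1$ nilpotents, while the fixed space equals the centralizer of $r$, namely $\mathfrak{gl}_1\oplus\mathfrak{gl}_{n-1}$, which acts reducibly on $V=\CC e_1\oplus\langle e_2,\dots,e_n\rangle$. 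If $\delta\ne1$, then $\mathrm{Ad}(r)$ is semisimple and $\mathfrak{g}=\mathfrak{g}_1\oplus\mathfrak{g}_\delta\oplus\mathfrak{g}_{\delta^{-1}}$; were $\mathfrak{g}_\delta=\mathfrak{g}_{\delta^{-1}}=0$, then $\mathfrak{g}$ would lie in the reducible centralizer and act reducibly, contradicting irreducibility, so one of these eigenspaces is nonzero, and replacing $r$ by $r^{-1}$ when $\delta\ne\pm1$ we may assume $\mathfrak{g}_\delta\ne0$. If $\delta=1$, the conclusion is immediate in a slightly different form: $N:=r-\mathrm{Id}$ is a rank-one nilpotent in $\mathfrak{sl}(V)$, and since $\mathrm{Ad}(r^{k})=\exp(k\,\mathrm{ad}\,N)$ preserves $\mathfrak{g}$ and is polynomial in $k\in\ZZ$, $\mathrm{ad}\,N$ preserves $\mathfrak{g}$; comparing $\mathrm{ad}\,N|_{\mathfrak g}$ with the inner derivation it must coincide with (derivations of the semisimple $\mathfrak g$ are inner), then invoking Schur and the trace, one gets $N\in\mathfrak{g}$. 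Hence, except when $\delta=-1$, $\mathfrak{g}$ contains a rank-one nilpotent; when $\delta=-1$, $\mathrm{Ad}(r)$ is an involution, $\mathfrak{g}=\mathfrak{g}_1\oplus\mathfrak{g}_{-1}$ is a symmetric decomposition with $\mathfrak{g}_{-1}\ne0$, and $\mathfrak{g}_{-1}\subseteq\langle E_{1j},E_{j1}\rangle_{j\ge2}$ provides a distinguished element adapted to an orthogonal structure.

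The decisive, and hardest, step is the classification: an irreducible semisimple subalgebra $\mathfrak{g}\subseteq\mathfrak{sl}(V)$ containing a rank-one nilpotent $N$ must be $\mathfrak{sl}(V)$ or $\mathfrak{sp}(V)$. I would run the Beukers--Heckman argument here: embed $N$ in an $\mathfrak{sl}_2$-triple of $\mathfrak{g}$, use that $V$ then restricts to $(\text{standard})\oplus(\text{trivial})^{\,n-2}$ over this $\mathfrak{sl}_2$ to pin down the weights of the adjoint action (equivalently, to produce a short $\ZZ$-grading of $\mathfrak g$ with one-dimensional extreme pieces), and deduce that $\mathfrak g$ is generated by its rank-one nilpotents; a symmetric invariant form is then impossible, since $\mathfrak{so}(V)$ has no rank-one elements for $n\ge3$, so $\mathfrak g=\mathfrak{sl}(V)$ if no form is preserved and $\mathfrak g=\mathfrak{sp}(V)$ if an alternating form is preserved. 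The case $\delta=-1$ is handled by the parallel analysis of the symmetric pair $\mathfrak g=\mathfrak g_1\oplus\mathfrak g_{-1}$, which yields $\mathfrak g=\mathfrak{so}(V)$ (or again $\mathfrak{sl}(V)$). It remains to match $\delta$: the eigenvalue multiset $\{1^{\,n-1},\delta\}$ of $r$ must be stable under $\lambda\mapsto\mu\lambda^{-1}$ for the appropriate multiplier $\mu$ whenever $r$ lies in the symplectic similitude group $\mathbf{GSp}_n(\CC)$ or in the orthogonal similitude group. This forces $\delta=+1$ in the symplectic case (taking $n\ge4$, so that $\mathfrak{sp}(V)\ne\mathfrak{sl}(V)$; $\delta=-1$ is excluded because $\det=1$ on $\Sp_n(\CC)$) and $\delta=-1$ in the orthogonal case (using $n\ge3$, together with the absence of rank-one nilpotents in $\mathfrak{so}(V)$ to rule out $\delta=+1$), while for $\mathfrak g=\mathfrak{sl}(V)$ no value of $\delta$ is excluded. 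Collecting these yields exactly the trichotomy (I)--(III). I expect the classification in this last paragraph to be the only genuinely hard part; the rest is bookkeeping with the eigenvalues of $\mathrm{Ad}(r)$ and with invariant bilinear forms.
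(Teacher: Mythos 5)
The paper does not prove this proposition at all: it is imported verbatim from Beukers--Heckman \cite{BH} (their Theorem 5.3 and its surrounding lemmas), so there is no in-paper argument to compare against. Your sketch is, in outline, a faithful reconstruction of the actual Beukers--Heckman proof: reduce to the semisimple Lie algebra $\mathfrak{g}$, diagonalize $\mathrm{Ad}(r)$ with eigenvalues $1,\delta,\delta^{-1}$, observe that the non-fixed eigenspaces consist of rank-one matrices while the fixed space is the reducible centralizer $\mathfrak{gl}_1\oplus\mathfrak{gl}_{n-1}$, extract a rank-one nilpotent (your transvection argument for $\delta=1$ via inner derivations, Schur, and the trace is correct and is essentially how BH handle it), and then invoke the classification of irreducible subalgebras of $\mathfrak{sl}(V)$ containing a rank-one element. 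Two caveats. First, you correctly identify the classification step as the real content, but it is the part you do not carry out; in particular the $\delta=-1$ branch, where $\mathfrak{g}_{-1}$ consists only of rank-$\le 2$ elements supported on the first row and column, requires a separate and genuinely harder symmetric-pair analysis than the rank-one-nilpotent case, and ``parallel analysis'' understates this. Second, the phrase ``$\delta=-1$ is excluded because $\det=1$ on $\Sp_n(\CC)$'' needs the intermediate step that the multiplier of $r\in\mathbf{GSp}_n(\CC)$ equals $1$ (which does follow for $n\ge 3$ from your eigenvalue-multiset argument, or from the fact that the $(n-1)$-dimensional fixed space of $r$ cannot be isotropic), since a priori $r$ only normalizes $G$ and need not preserve the form on the nose. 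With those steps filled in, the argument is the standard one.
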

An immediate consequence is
\begin{tm} \label{main}
Assume that $\Mon^0$ acts on $\CC^{2^m}$ irreducibly. Then we have the following 
three possibilities,
\\
{\rm (I)} If $\delta_0 \ne \pm 1$ then $\SL_{2^n}(\CC) \subset \overline{\Mon}$,
\\
{\rm (II)} If $\delta_0 = +1$ then $\SL_{2^n}(\CC) \subset \overline{\Mon}$ or 
$\Sp_{2^n}(\CC) \subset \overline{\Mon} \subset \mathbf{GSp}_{2^n}(\CC)$,
\\
{\rm (III)} If $\delta_0 = -1$ then $\SL_{2^n}(\CC) \subset \overline{\Mon}$ or 
$\SO_{2^n}(\CC) \subset \overline{\Mon} \subset \mathbf{GO}_{2^n}(\CC)$.
\\
Moreover we have $\overline{\Mon}^0 \subset \SL_{2^n}(\CC)$ if $a,b,c_i \in \QQ$.
\end{tm}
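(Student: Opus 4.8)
The plan is to deduce this statement from Proposition \ref{BH-prop} applied to the connected algebraic group $G = \overline{\Mon}^0 \cap \SL_{2^n}(\CC)$, together with the observation that $M_0$ is a reflection normalizing $\overline{\Mon}^0$. First I would set $\widetilde{G} = \overline{\Mon}^0$ and note that, since $M_0$ normalizes $\Mon$ (it is one of the generators) it normalizes $\overline{\Mon}$, hence also its identity component $\widetilde{G}$; the same holds for every $g M_0 g^{-1}$, but one copy suffices. By Remark \ref{rem-M0}(2), $M_0$ is a reflection with special eigenvalue $\delta_0 = (-1)^{n+1}\gamma_1\cdots\gamma_n/(\alpha\beta)$. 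The hypothesis is that $\widetilde{G}$ (equivalently $\Mon^0$, which has the same Zariski closure) acts irreducibly on $\CC^{2^n}$.

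Next I would pass to the trace-one part. Set $G = \widetilde{G}\cap\SL_{2^n}(\CC)$. Since $\widetilde{G}$ is connected and the determinant character $\det\colon\widetilde{G}\to\CC^\times$ has image a connected subgroup of $\CC^\times$, either $\det$ is trivial on $\widetilde{G}$ (so $\widetilde{G}=G\subset\SL_{2^n}(\CC)$) or its image is all of $\CC^\times$; in either case $G$ is a connected normal subgroup of $\widetilde{G}$ with $\widetilde{G}=G\cdot Z$ where $Z$ consists of scalars (the center of $\GL_{2^n}$ meets every fiber of $\det$). Because scalars act by homotheties, $G$ still acts irreducibly on $\CC^{2^n}$. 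Now $M_0$ normalizes $\widetilde{G}$ and hence normalizes $G = \widetilde{G}\cap\SL_{2^n}(\CC)$ (intersection of two $M_0$-stable subgroups). Applying Proposition \ref{BH-prop} to $G\subset\SL_{2^n}(\CC)$ and the reflection $r=M_0$ with special eigenvalue $\delta=\delta_0$ gives the trichotomy: $G=\SL_{2^n}(\CC)$ always in case (I); $G$ equals $\SL_{2^n}(\CC)$ or $\Sp_{2^n}(\CC)$ in case (II); $G$ equals $\SL_{2^n}(\CC)$ or $\SO_{2^n}(\CC)$ in case (III).

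From $G\subset\overline{\Mon}$ we immediately get $\SL_{2^n}(\CC)\subset\overline{\Mon}$, $\Sp_{2^n}(\CC)\subset\overline{\Mon}$, or $\SO_{2^n}(\CC)\subset\overline{\Mon}$ respectively. For the upper bounds in (II) and (III): when $G=\Sp_{2^n}(\CC)$, this symplectic form is (up to scalar) the unique $G$-invariant bilinear form, and it must be preserved up to scalar by the whole normalizer of $G$ in $\GL_{2^n}(\CC)$; since $\Mon$ preserves the intersection form $H$ in the sense ${}^tM_i H M_i^\vee = H$ of Proposition \ref{prop-rep-matrix} — and in the relevant real/rational situations $H$ is the symplectic (resp.\ orthogonal) form up to scalar by Corollary \ref{cor-rep-matrix} — we get $\overline{\Mon}\subset\mathbf{GSp}_{2^n}(\CC)$ (resp.\ $\mathbf{GO}_{2^n}(\CC)$). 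Concretely: the $\Mon$-module $\CC^{2^n}$ carries the nondegenerate form $H$, which forces $\overline{\Mon}$ into the corresponding similitude group whenever $H$ is symmetric or alternating (after rescaling so that $\vee$ becomes trivial), and $G\subset\overline{\Mon}^0$ being classical of the matching type pins down which case occurs. Finally, for the last sentence, suppose $a,b,c_i\in\QQ$; then $\alpha,\beta,\gamma_k$ are roots of unity, so $\det M_0=\delta_0$ and $\det M_k=\gamma_k^{-1}$ are roots of unity, whence $\det(\Mon)$ is a finite (hence torsion) subgroup of $\CC^\times$; therefore $\det$ is trivial on the connected group $\overline{\Mon}^0$, i.e.\ $\overline{\Mon}^0\subset\SL_{2^n}(\CC)$.

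The main obstacle I anticipate is the careful bookkeeping for the upper bounds $\overline{\Mon}\subset\mathbf{GSp}_{2^n}$ and $\overline{\Mon}\subset\mathbf{GO}_{2^n}$: one must check that the form $H$ furnished by Proposition \ref{prop-rep-matrix}, after absorbing the $\vee$-twist, is genuinely symmetric or alternating (this is where the parity condition ${}^tH=(-1)^nH^\vee$ and the hypotheses of Corollary \ref{cor-rep-matrix} enter), that it is nondegenerate, and that invariance of a nondegenerate bilinear form up to scalar is exactly the condition defining the relevant similitude group — and then matching the type of this form with the classical group $G$ produced by Proposition \ref{BH-prop}. The rest is a routine translation between $\Mon$, its Zariski closure, and the identity component.
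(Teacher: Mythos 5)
Your proposal follows essentially the same route as the paper's proof: reduce to the identity component of $\overline{\Mon}\cap\SL_{2^n}(\CC)$, apply Proposition \ref{BH-prop} with the reflection $r=M_0$, obtain the upper bounds in (II) and (III) from the fact that the normalizer of $\Sp_{2^n}(\CC)$ (resp.\ $\SO_{2^n}(\CC)$) in $\GL_{2^n}(\CC)$ is $\mathbf{GSp}_{2^n}(\CC)$ (resp.\ $\mathbf{GO}_{2^n}(\CC)$), and deduce the last claim from finiteness of $\det(\Mon)$ for rational parameters. Two points need repair. First, your justification that $G=\overline{\Mon}^0\cap\SL_{2^n}(\CC)$ is connected and still acts irreducibly does not hold as written: the parenthetical ``the center of $\GL_{2^n}$ meets every fiber of $\det$'' does not show that those scalars lie in $\overline{\Mon}^0$, so it does not yield $\overline{\Mon}^0=G\cdot Z$ with $Z\subset\overline{\Mon}^0$; moreover the kernel of a character on a connected group need not be connected. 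The correct argument --- which is what the paper does, at the Lie algebra level ($\mathfrak{m}$ irreducible implies $\mathfrak{m}\cap\mathfrak{sl}_{2^n}$ irreducible) --- must use irreducibility here: a connected group acting irreducibly is reductive with scalar center by Schur's lemma, so $\overline{\Mon}^0$ is the product of its scalar central torus and its derived group, the latter being connected and contained in $(\overline{\Mon}\cap\SL_{2^n}(\CC))^0$; since scalars preserve every subspace, that connected $\SL$-part acts irreducibly. Second, the detour through the intersection form $H$ for the upper bounds is superfluous and would not work for general parameters, since ${}^tM_iHM_i^{\vee}=H$ is genuine bilinear invariance only when ${}^{\vee}$ is trivial; your first clause --- that any element normalizing $G$ must preserve the unique $G$-invariant bilinear form up to scalar --- already gives $\overline{\Mon}\subset\mathbf{GSp}_{2^n}(\CC)$ or $\mathbf{GO}_{2^n}(\CC)$, and is precisely the paper's argument.
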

\begin{proof}
Let $\mathfrak{m}$ be the Lie algebra of $\overline{\Mon}$.
By the assumption, $\mathfrak{m}$ acts on $\CC^{2^n}$ irreducibly, and so is
$\mathfrak{m} \cap \mathfrak{sl}_{2^n}(\CC)$. Therefore 
$G = (\overline{\Mon} \cap \SL_{2^n}(\CC))^0$ ats on $\CC^{2^n}$ irreducibly. Since  
$G$ is normalized by $M_0$, we can apply Proposition \ref{BH-prop}. We have the above 
three cases, since $\overline{\Mon}$ normalizes $G$ and the normalizer of $\Sp$ and $\SO$ 
in $\GL$ are $\mathbf{GSp}$ and $\mathbf{GO}$ respectively.  
If $a, b, c_k \in \QQ$, then the image of 
$\det : \Mon \rightarrow \CC^{\times}$ is finite and we have $\Mon^0 \subset \SL_{2^n}(\CC)$. 
\end{proof} 
\begin{rem}
 For $n=2$, it was shown by Sasaki that $\overline{\Mon} = \GL_4(\CC)$ if parameters 
are general complex numbers in {\rm \cite{Sa}}. The same is true for $n \geq 3$ by 
{\rm (I)} of the above theorem {\rm (}see also Corollary {\rm \ref{main3-cor})}. 
\end{rem}
\begin{cor} \label{cor-main}
Assume that $\Mon^0$ acts on $\CC^{2^m}$ irreducibly, and that 
$a, b \in \RR, \ a +b \in \ZZ, \ c_k \in \frac{1}{2} \ZZ$ and $\sum_{k=1}^n c_k \in \ZZ$. 
Then we have 
$\overline{\Mon} = \mathbf{O}_{2^n}(\CC)$ if $n$ is even, and 
$\overline{\Mon} = \Sp_{2^n}(\CC)$ if $n$ is odd.
\end{cor}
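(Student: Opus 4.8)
The plan is to combine Theorem \ref{main} with the refined information from Corollary \ref{cor-rep-matrix}(2) about the case $a=b=1/2$, $c_k=1$. First I would record that under the hypotheses $a,b \in \RR$, $a+b \in \ZZ$, $c_k \in \frac{1}{2}\ZZ$, $\sum_k c_k \in \ZZ$, Corollary \ref{cor-rep-matrix}(1) tells us that all generators $M_i$ preserve the real form $H$ and that $H$ is symmetric when $n$ is even and skew-symmetric when $n$ is odd. Hence $\Mon$, and therefore $\overline{\Mon}$, is contained in $\mathbf{GO}_{2^n}(\CC)$ (resp.\ $\mathbf{GSp}_{2^n}(\CC)$) for $n$ even (resp.\ odd); in fact, since $a,b,c_k \in \QQ$, the last sentence of Theorem \ref{main} gives $\overline{\Mon}^0 \subset \SL_{2^n}(\CC)$, so each $M_i$ in fact has determinant a root of unity and the similitude factor is $\pm 1$, placing $\overline{\Mon}$ inside $\mathbf{O}_{2^n}(\CC)$ (resp.\ $\Sp_{2^n}(\CC)$, noting $\mathbf{GSp} \cap \SL$-type constraints force the multiplier $1$ after passing to a power — I would check the multiplier is exactly $1$ on generators using the explicit $M_k$ and $M_0$).

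Next I would pin down $\delta_0$. By Remark \ref{rem-M0}(2), $\delta_0 = (-1)^{n+1}\gamma_1\cdots\gamma_n/(\alpha\beta)$; under our hypotheses $\prod_k \gamma_k = 1$ and $\alpha\beta = e^{2\pi i(a+b)} = 1$ since $a+b \in \ZZ$, so $\delta_0 = (-1)^{n+1}$. Thus for $n$ odd we are in case (II) of Theorem \ref{main} ($\delta_0 = +1$) and for $n$ even we are in case (III) ($\delta_0 = -1$). Combining with the containment from the previous paragraph: for $n$ odd, $\overline{\Mon}$ is either $\supset \SL_{2^n}(\CC)$ or $\supset \Sp_{2^n}(\CC)$, but it is also $\subset \Sp_{2^n}(\CC)$, and since $\SL_{2^n} \not\subset \Sp_{2^n}$ for $2^n \geq 4$, the first alternative is impossible, so $\overline{\Mon} = \Sp_{2^n}(\CC)$. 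Symmetrically, for $n$ even, $\overline{\Mon} \supset \SL_{2^n}(\CC)$ or $\supset \SO_{2^n}(\CC)$ while $\overline{\Mon} \subset \mathbf{O}_{2^n}(\CC)$; again $\SL_{2^n} \not\subset \mathbf{O}_{2^n}$, so $\overline{\Mon} \supset \SO_{2^n}(\CC)$, and together with $\overline{\Mon} \subset \mathbf{O}_{2^n}(\CC)$ this gives $\overline{\Mon} = \mathbf{O}_{2^n}(\CC)$ — here I should verify that $\Mon$ is not contained in $\SO_{2^n}(\CC)$, e.g.\ by exhibiting a generator of determinant $-1$ or noting $M_0$ is a genuine (order-two or reflection) element of determinant $-1$, which follows from $\delta_0 = -1$ being the special eigenvalue.

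The main obstacle I anticipate is not the group theory above, which is essentially bookkeeping once $\delta_0$ is computed, but rather verifying the hypothesis of Theorem \ref{main}: that $\Mon^0$ acts irreducibly on $\CC^{2^n}$. The corollary simply \emph{assumes} this, so strictly there is nothing to prove there — but I would want to double-check that the assumption is consistent, i.e.\ that the parameters $a=b=1/2$, $c_k=1$ do satisfy the relevant irreducibility criteria (for instance Proposition \ref{prop-irr} is stated for parameters away from integers, which these are not, so irreducibility of $\Mon$ itself — let alone $\Mon^0$ — requires the separate analysis promised via $\Ref$ in Theorems \ref{main2} and \ref{main3}). Since the statement is phrased conditionally, I would keep the proof short: assume $\Mon^0$ irreducible, deduce from Theorem \ref{main} the two-way dichotomy, compute $\delta_0 = (-1)^{n+1}$, invoke the $H$-compatibility from Corollary \ref{cor-rep-matrix} to get the outer containment, and rule out the $\SL_{2^n}$ alternative by the elementary fact that $\SL_{2^n}(\CC)$ does not preserve any non-degenerate bilinear form for $2^n > 2$. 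The one genuinely non-routine check is confirming $\overline{\Mon} = \mathbf{O}_{2^n}$ and not merely $\SO_{2^n}$ when $n$ is even, for which I would point to the explicit shape of $M_0$ in Corollary \ref{cor-rep-matrix}(2) having determinant $(-1)^{n+1} = -1$.
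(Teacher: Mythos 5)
Your proposal is correct and follows essentially the same route as the paper's (two-sentence) proof: the containment $\Mon \subset \mathbf{O}_{2^n}(\CC)$ resp.\ $\Sp_{2^n}(\CC)$ from Corollary \ref{cor-rep-matrix}, the computation $\delta_0=(-1)^{n+1}$ from $\alpha\beta=\prod_k\gamma_k=1$, and Theorem \ref{main} to rule out the $\SL_{2^n}$ alternative. The extra care you take --- checking the similitude multiplier and that $\det M_0=-1$ forces $\mathbf{O}_{2^n}$ rather than $\SO_{2^n}$ for even $n$ --- is already implicit in the exact relation ${}^tM_i H M_i = H$ of Corollary \ref{cor-rep-matrix} and in $\delta_0$ being by definition the determinant of the reflection $M_0$.
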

\begin{proof}
By Corollary \ref{cor-rep-matrix}, we have 
$\Mon \subset \mathbf{O}_{2^n}(\CC)$ if $n$ is even, and 
$\Mon \subset \Sp_{2^n}(\CC)$ if $n$ is odd. 
Since $\delta_0 = (-1)^{n+1}$, the assertion follows from Theorem \ref{main}.
\end{proof}
Because of the above theorem, it is interesting to determine conditions for 
irreducibility of  $\Mon^0$. We give a partial answer to this problem in the 
following (Corollary \ref{main3-cor}). 
Let $\Ref \subset \Mon$ be the smallest normal subgroup containing $M_0$, that is, 
a subgroup generated by reflections $gM_0g^{-1} \ (g \in \Mon)$. 
Since $\Ref^0 \subset \Mon^0$, the irreducibility of $\Ref^0$ is a sufficient condition 
for irreducibility of  $\Mon^0$.
The reflection subgroup was introduced in \cite{BH} for the generalized hypergeometric 
function $_{n}F_{n-1}$, and considered in \cite{Ka} for Appell's $F_4$ to study the 
finiteness condition. 
\begin{prop} \label{finite-prop}
The monodromy group $\Mon$ is finite if and only if $\Ref$ is finite.
\end{prop}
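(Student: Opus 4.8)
The plan is to prove the two implications separately, with the nontrivial direction being that finiteness of $\Ref$ forces finiteness of $\Mon$. The easy direction is immediate: if $\Mon$ is finite, then $\Ref \subset \Mon$ is finite as a subgroup of a finite group. For the converse, the key structural fact is that $\Ref$ is a normal subgroup of $\Mon$ by construction, so we want to control the quotient $\Mon/\Ref$. The strategy is to show that this quotient is generated by the images of $M_1, \ldots, M_n$ and is abelian, and in fact has bounded exponent, so that if $\Ref$ is finite then $\Mon$ is finite.

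First I would observe that $\Mon$ is generated by $M_0, M_1, \ldots, M_n$, so $\Mon/\Ref$ is generated by the cosets $\overline{M_1}, \ldots, \overline{M_n}$ (the coset of $M_0$ being trivial). The relations in Proposition on $\pi_1(X,\dot x)$ give $\rho_i \rho_j = \rho_j \rho_i$ for $i,j \in \{1,\dots,n\}$, hence $M_i M_j = M_j M_i$; therefore $\Mon/\Ref$ is abelian. Next I would use the relations $(\rho_0 \rho_k)^2 = (\rho_k \rho_0)^2$, i.e. $(M_0 M_k)^2 = (M_k M_0)^2$. Working modulo $\Ref$, where $\overline{M_0} = 1$ but also $\overline{M_0^{M_k}} = \overline{M_k M_0 M_k^{-1}} = 1$, this relation should be massaged to show that $\overline{M_k}^2$, or some fixed power of $\overline{M_k}$, is trivial in $\Mon/\Ref$. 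Concretely, rewrite $(M_0 M_k)^2 = (M_k M_0)^2$ as $M_0 \cdot M_k M_0 M_k^{-1} \cdot M_k^2 = M_k^2 \cdot M_k^{-1} M_0 M_k \cdot M_0$; since both $M_0$ and its conjugate $M_k M_0 M_k^{\pm 1}$ lie in $\Ref$, passing to $\Mon/\Ref$ collapses the left side to $\overline{M_k}^2$ and the right side to $\overline{M_k}^2$ as well — so this particular manipulation gives a tautology, and one must be more careful. The genuinely useful input is instead that each $M_k$ has finite order exactly when $\gamma_k$ is a root of unity, and that the determinant character already constrains things; more robustly, one uses that $G_k$ has eigenvalues $1$ and $\gamma_k^{-1}$, so $M_k$ is quasi-unipotent iff $\gamma_k$ is a root of unity, and if $\Ref$ is finite then in particular the reflection $M_0$ has finite order, forcing $\delta_0 = (-1)^{n+1}\gamma_1\cdots\gamma_n/(\alpha\beta)$ to be a root of unity; combined with finiteness forcing all relevant eigenvalue data to be roots of unity, one deduces each $\gamma_k$ is a root of unity, hence each $\overline{M_k}$ has finite order, hence the finitely generated abelian group $\Mon/\Ref$ is finite, and therefore $\Mon$ is finite.

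The cleanest route, which I would actually follow, avoids eigenvalue bookkeeping: $\Ref$ finite implies $\overline{\Ref} = \Ref$ is a finite group, so $\overline{\Ref}^0$ is trivial, hence $\Ref$ acts reducibly unless $2^n = 1$; but more to the point, $\Mon$ normalizes the finite group $\Ref$, so $\Mon^0$ centralizes $\Ref$ (a connected group acting on a finite normal subgroup by conjugation must act trivially). Since $M_0 \in \Ref$, the identity component $\Mon^0$ commutes with $M_0$ and with all its $\Mon$-conjugates. If $\Mon^0$ were nontrivial it would be a positive-dimensional algebraic group commuting with $\Ref \supset \langle g M_0 g^{-1}\rangle$; analyzing the centralizer of $\Ref$ and using that $M_0 - \mathrm{Id}$ has rank one, one shows the commutant is too small to contain a positive-dimensional group acting the way $\Mon^0$ must — concretely, $\Mon^0$ would lie in the centralizer of a reflection, and running this for enough conjugates pins $\Mon^0$ down to scalars, which by the rationality/determinant argument of Theorem \ref{main} are trivial. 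Hence $\Mon^0 = \{1\}$, so $\Mon = \Mon/\Mon^0$ is finite because $\Mon/\Mon^0$ is always finite.

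The main obstacle I anticipate is the middle step: extracting from the braid-type relations $(M_0M_k)^2 = (M_kM_0)^2$ and the commuting relations enough to pin down $\Mon/\Ref$, since naive reduction modulo $\Ref$ yields tautologies. I expect the resolution is to argue on the level of algebraic groups as in the last paragraph — showing $\Mon^0$ centralizes the finite normal subgroup $\Ref$ and then that such a centralizer, intersected with the constraints coming from $M_0$ being a rank-one reflection and from irreducibility considerations, cannot be positive-dimensional — rather than by a purely group-presentational computation. This is also the step most likely to need a lemma on centralizers of reflection groups that may have to be imported from \cite{BH} or \cite{Ka}.
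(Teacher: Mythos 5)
There is a genuine gap at the one step that matters: showing that the images $\overline{M_k}$ generate a \emph{finite} group $\Mon/\Ref$. You correctly dispose of the trivial direction, correctly identify that $\Mon/\Ref$ is abelian and generated by $\overline{M_1},\dots,\overline{M_n}$, and correctly notice that reducing the relations $(M_0M_k)^2=(M_kM_0)^2$ modulo $\Ref$ is tautological. But neither of your two proposed repairs closes the gap. In your eigenvalue route, the finiteness of $\Ref$ only directly controls eigenvalues of elements of $\Ref$ (e.g.\ it forces $\delta_0=(-1)^{n+1}\gamma_1\cdots\gamma_n/(\alpha\beta)$ to be a root of unity), and you give no argument extracting the individual $\gamma_k$ from this; worse, even if every $\gamma_k$ is a root of unity, $M_k$ need not have finite order: for $\gamma_k=1$ the block $G_k=\left(\begin{smallmatrix}1&-1\\0&1\end{smallmatrix}\right)$ is a nontrivial unipotent of infinite order, and quasi-unipotence is not finiteness of order. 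In your algebraic-group route, concluding that the centralizer of $\Ref$ consists of scalars requires $\Ref$ to act irreducibly, and killing an infinite scalar subgroup requires the determinant/rationality hypothesis $a,b,c_k\in\QQ$; neither hypothesis is part of the Proposition, so this route proves at best a weaker statement.

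The missing idea, which is how the paper argues, is elementary and local to a single pair $(M_0,M_k)$: if $\Ref$ is finite then the set $\{M_k^{d}M_0M_k^{-d}\mid d\ge 1\}\subset\Ref$ is finite, so some positive power $M_k^{d}$ commutes with $M_0$. One then computes $(M_0M_k^{-d}-M_k^{-d}M_0)\,\ee_{1,\dots,1}$ explicitly using the matrices of Proposition \ref{prop-rep-matrix}; the result is $\frac{1-\gamma_k^{d}}{1-\gamma_k}\bigl((1-\delta_0)\ee_{\dots}+(\gamma_k+\delta_0)\ee_{1,\dots,1}\bigr)$, and since the two basis vectors are independent and $(1-\delta_0,\gamma_k+\delta_0)\ne(0,0)$ unless $\gamma_k=-1$, commutation forces $\gamma_k^{d}=1$ and in fact $M_k^{d}=\mathrm{Id}$ (while $\gamma_k=-1$ gives $M_k^2=\mathrm{Id}$ outright). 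Thus each $M_k$ has finite order \emph{in $\Mon$ itself}, so the finitely generated abelian quotient $\Mon/\Ref$ is finite and $\Mon$ is finite. Note this computation also shows why your worry case $\gamma_k=1$ is vacuous: there no power of $M_k$ commutes with $M_0$, so $\Ref$ is already infinite. You would need to supply this (or an equivalent) computation to complete the proof.
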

\begin{proof}    
Let us assume that $\Ref$ is a finite group. Since 
$\{ M_1^d M_0 M_1^{-d} \ | \ d =1,2, \dots\}$ is a finite set, there exist $k$ and $l$
($k \ne l$) such that $M_1^k M_0 M_1^{-k} = M_1^l M_0 M_1^{-l}$, namely,   
$M_1^{k-l}M_0 = M_0 M_1^{k-l}$. On the other hand, we have
\[
 M_1^{-d} = G_1^{-d} \otimes E_2 \otimes \cdots \otimes E_2, \qquad
 G_1^{-d} = \begin{pmatrix} 1 & \frac{1-\gamma_1^d}{1-\gamma_1} \\ 
0 & \gamma_1^d \end{pmatrix}
\]
for $d=1,2,\dots$, and
\begin{align*}
  M_1^{-d} M_0 \ee_{1,\dots,1} &= \delta_0 M_1^{-d} \ee_{1,\dots,1} 
= \delta_0 (\frac{1-\gamma_1^d}{1-\gamma_1} \ee_{0,1,\dots,1} + \gamma_1^d \ee_{1,\dots,1}),
\\
 M_0 M_1^{-d}\ee_{1,\dots,1} 
&= M_0(\frac{1-\gamma_1^d}{1-\gamma_1} \ee_{0,1,\dots,1} + \gamma_1^d \ee_{1,\dots,1})
= \frac{1-\gamma_1^d}{1-\gamma_1} M_0 \ee_{0,1,\dots,1} 
+ \delta_0 \gamma_1^d \ee_{1,\dots,1}.
\end{align*}
Therefore, if $M_1^d$ commutes with $M_0$, we have
\begin{align*}
  \mathbf{0} = (M_0 M_1^{-d} - M_1^{-d} M_0)\ee_{1,\dots,1} 
&= \frac{1-\gamma_1^d}{1-\gamma_1} (M_0 - \delta_0 \mathrm{Id}) \ee_{0,1,\dots,1} \\
&= \frac{1-\gamma_1^d}{1-\gamma_1} \big( (1-\delta_0)\ee_{0,1,\dots,1} 
+ (\gamma_1 + \delta_0)\ee_{1,\dots,1}
\big).
\end{align*}
If $\gamma_1 \ne -1$, this implies $\gamma_1^d = 1$ and $M_1^d = \mathrm{Id}$. If 
$\gamma_1 = -1$, we have $G_1=\begin{pmatrix} 1 & 1 \\ 0 & -1 \end{pmatrix}$ 
and $M_1^2 = \mathrm{Id}$. By the similar argument, we see that $M_k \ (k=1,\dots,n)$ 
are of finite order. Therefore $\Mon / \Ref$ is finite, and so is $\Mon$. 
The converse is obvious. 
\end{proof}
 Let us consider reflections
 \begin{align*}
   R_I = M^I M_0 (M^I)^{-1} = E_{2^n} - M^I N_0 (M^I)^{-1} \qquad I \in \{0,1\}^n
 \end{align*}
and endomorphisms $N_I = E_{2^n} - R_I = M^I N_0 (M^I)^{-1}$ of $\CC^{2^n}$. 
\begin{lem} \label{lem-NI}
\rm{(1)} \ The image of $N_I$ is spanned by $\ff_I$, and we have
\[
\ker N_I = M^I \cdot \ker N_0 = \{ w \in \CC^{2^n} \ | \ {}^tw H \ff_I^{\vee} = 0 \}.
\]
\rm{(2)} \ A linear map 
\[
\nu \ : \ \CC^{2^n} \rightarrow \oplus_{I \in \{0,1\}^n} \CC \ff_I, 
\quad w \mapsto (\dots, N_I w, \dots)
\] 
is an isomorphism under the condition $(\mathrm{irr - \alpha \beta \gamma})$. 
\end{lem}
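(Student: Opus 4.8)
\textbf{Proof proposal for Lemma \ref{lem-NI}.}

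For part (1), the plan is to work directly from the definition $N_I = M^I N_0 (M^I)^{-1}$. Since $N_0 = {}^t(\mathbf{0},\dots,\mathbf{0},v)$ has rank one with image spanned by $\ee_{1,\dots,1}$ (its only nonzero row being the last), conjugation by the invertible matrix $M^I$ preserves rank one and sends the image to $M^I \ee_{1,\dots,1} = \ff_I$; this gives $\mathrm{image}(N_I) = \CC \ff_I$. For the kernel, the identity $\ker N_I = M^I \cdot \ker N_0$ is immediate from $N_I = M^I N_0 (M^I)^{-1}$. It remains to identify $M^I \cdot \ker N_0$ with $\{ w \mid {}^t w H \ff_I^\vee = 0\}$. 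By Remark \ref{rem-M0}(2), $\ker N_0 = \{ u \mid {}^t u H \ee_{1,\dots,1} = 0\}$, so $w \in M^I \ker N_0$ iff ${}^t((M^I)^{-1} w) H \ee_{1,\dots,1} = 0$, i.e. ${}^t w \, {}^t(M^I)^{-1} H \ee_{1,\dots,1} = 0$. Now I would invoke the invariance relation ${}^t M_i H M_i^\vee = H$ from Proposition \ref{prop-rep-matrix}, which gives ${}^t(M^I)^{-1} H = H (M^I)^\vee$ (taking the relevant product of the $M_k$ and their inverses, and using that the $M_k$ commute). Hence the condition becomes ${}^t w H (M^I)^\vee \ee_{1,\dots,1} = {}^t w H \ff_I^\vee = 0$, as claimed, where $\ff_I^\vee = (M^I)^\vee \ee_{1,\dots,1}$ since $\ee_{1,\dots,1}$ has rational (indeed integer) entries fixed by ${}^\vee$.

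For part (2), the target space $\oplus_I \CC \ff_I$ has dimension $2^n$ by Proposition \ref{prop-irr}(1) (the $\ff_I$ form a basis under the irreducibility hypothesis), so $\nu$ is a map between spaces of equal dimension and it suffices to prove injectivity. Suppose $\nu(w) = 0$, i.e. $N_I w = 0$ for all $I \in \{0,1\}^n$. By part (1), $\ker N_I = \{ w \mid {}^t w H \ff_I^\vee = 0\}$, so this says ${}^t w H \ff_I^\vee = 0$ for every $I$. Since $\{\ff_I^\vee\}$ is again a basis of $\CC^{2^n}$ (it is the image of the basis $\{\ff_I\}$ under the field automorphism ${}^\vee$ applied entrywise, which is $\CC$-semilinear but still carries a basis to a basis — alternatively the $\ff_I^\vee$ span because applying ${}^\vee$ to a linear dependence gives one back), the vector ${}^t w H$ is orthogonal to a spanning set, hence ${}^t w H = 0$; as $H$ is invertible (a consequence of the nondegeneracy implicit in $(\mathrm{irr}\text{-}\alpha\beta\gamma)$, or checked directly from its determinant), we get $w = 0$.

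The one point that needs a little care — and the only place I expect genuine friction — is the manipulation of the invariance relation in part (1): one must be precise that for $I = (i_1,\dots,i_n)$ the matrix $M^I = M_1^{i_1}\cdots M_n^{i_n}$ satisfies ${}^t M^I H (M^I)^\vee = H$, which follows by iterating ${}^t M_k H M_k^\vee = H$ together with the commutativity $M_i M_j = M_j M_i$ and the compatibility of ${}^\vee$ with products, and then rearranging to ${}^t(M^I)^{-1} H = H (M^I)^\vee$. Everything else is bookkeeping: rank-one conjugation for the image, the defining relation for the kernel, and a dimension count plus nondegeneracy of $H$ for the isomorphism in (2). I would also remark that the invertibility of $H$ can be taken as known from \cite{Go2} or \cite{GM2}, since $H$ is the intersection matrix of a basis of the relevant twisted homology group, but it also drops out of the argument above once one knows $\nu$ is injective precisely when ${}^t w H = 0 \Rightarrow w = 0$.
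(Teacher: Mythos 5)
Your proposal is correct and follows essentially the same route as the paper's own proof: rank-one conjugation for the image, the chain of equivalences $w\in\ker N_I \Leftrightarrow (M^I)^{-1}w\in\ker N_0 \Leftrightarrow {}^t w H (M^I)^{\vee}\ee_{1,\dots,1}=0$ via the invariance ${}^t M_i H M_i^{\vee}=H$, and for (2) the linear independence of the $\ff_I^{\vee}$ together with nondegeneracy of $H$. The extra details you supply (the identity ${}^t(M^I)^{-1}H = H(M^I)^{\vee}$, the semilinearity of ${}^{\vee}$, and the invertibility of $H$) are exactly the points the paper leaves implicit.
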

\begin{proof}
  (1) We have
\[
N_I \cdot \CC^{2^n} = M^I N_0 (M^I)^{-1} \cdot \CC^{2^n}
= M^I N_0 \cdot \CC^{2^n} = M^I \cdot \CC \ee_{1,\dots,1} = \CC \ff_I 
\]
and 
\begin{align*}
w \in \ker N_I \ \Leftrightarrow \ M^I N_0 (M^I)^{-1}w = \mathbf{0}
\ &\Leftrightarrow \ N_0 (M^I)^{-1}w = \mathbf{0} \\
&\Leftrightarrow \ (M^I)^{-1}w \in \ker N_0 \\
&\Leftrightarrow \ {}^t((M^I)^{-1}w) H \ee_{1,\dots,1} = 0 \\
&\Leftrightarrow \ {}^tw H ((M^I)^{\vee} \ee_{1,\dots,1}) = 0
\ \Leftrightarrow \ {}^tw H \ff_I^{\vee} = 0.
\end{align*}
(2) \ Since $\ff_I$ are linearly independent under the condition 
($\mathrm{irr - \alpha \beta \gamma}$), we see that $\ff_I^{\vee}$ are 
linearly independent and $\ker \nu = \mathbf{0}$.  
\end{proof}
\begin{tm} \label{main2}
Assume that $\Mon$ is irreducible {\rm(} that is, $(\mathrm{irr - \alpha \beta \gamma})$
holds {\rm)}. The action of $\Ref$ is irreducible if and only if 
  at most one of $\gamma_1 ,\ldots ,\gamma_n ,\alpha \beta^{-1}$ is $-1$. 
\end{tm}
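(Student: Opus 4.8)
The plan is to analyze $\Ref$ through its natural invariant data, namely the subspaces it fixes or stabilizes, and to locate reducibility precisely at the parameter walls $\gamma_k=-1$ or $\alpha\beta^{-1}=-1$. Since $\Ref$ is generated by the reflections $R_I$ with $N_I = E_{2^n}-R_I$ having one-dimensional image $\CC\ff_I$ and kernel $\{w : {}^tw H\ff_I^\vee = 0\}$ (Lemma~\ref{lem-NI}), any $\Ref$-invariant subspace $W$ is automatically $R_I$-invariant for all $I$; hence either $\ff_I\in W$ or $W\subset\ker N_I$. The first key observation is that if $W$ contains even one $\ff_I$, then applying the generators $M_k$ (which lie in $\Mon$ and normalize $\Ref$, so permute the $R_I$ up to the group) together with the commutation/eigenvector relations already exploited in the proof of Proposition~\ref{finite-prop} forces $W$ to contain all the $\ff_J$, whence $W=\CC^{2^n}$ by Proposition~\ref{prop-irr}(1). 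So a proper nonzero $\Ref$-invariant $W$ must satisfy $W\subset\bigcap_I\ker N_I$; but by Lemma~\ref{lem-NI}(2) the map $\nu$ is injective, so $\bigcap_I\ker N_I=\mathbf{0}$. Thus on the face of it $\Ref$ is always irreducible — which cannot be right, so the subtlety is that $\Ref$-invariance does \emph{not} give $R_I$-invariance for all $I$ unless $\Ref$ actually contains all $R_I$; it contains $R_0=M_0$ and its $\Mon$-conjugates, and when a parameter wall is hit the orbit of $M_0$ under conjugation is strictly smaller, so one must track which $R_I$ (and which further conjugates $gM_0g^{-1}$) genuinely appear.

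Accordingly, the real argument I would run is: first, \textbf{sufficiency}. Assume at most one of $\gamma_1,\dots,\gamma_n,\alpha\beta^{-1}$ equals $-1$; I want $\Ref$ irreducible. Take a nonzero $\Ref$-invariant $W$. Because $M_0\in\Ref$, $W$ is $M_0$-invariant, so either $\ee_{1,\dots,1}\in W$ or $W\subset\ker N_0$. More usefully, for each conjugate $R=gM_0g^{-1}\in\Ref$ ($g\in\Mon$), $W$ is $R$-invariant, so $W$ meets $\mathrm{im}\,N_R = \CC\cdot g\ee_{1,\dots,1}$ or lies in $\ker N_R$. The crucial computational input is the commutator identity from the proof of Proposition~\ref{finite-prop}: $(M_0 M_1^{-d}-M_1^{-d}M_0)\ee_{1,\dots,1}$ is a nonzero multiple of $(1-\delta_0)\ee_{0,1,\dots,1}+(\gamma_1+\delta_0)\ee_{1,\dots,1}$ as long as $\gamma_1\ne -1$ — i.e. $\Ref$ contains elements whose associated vectors are $\ee_{1,\dots,1}$ \emph{and} (essentially) $\ee_{0,1,\dots,1}$, hence by iterating over all indices $k$ with $\gamma_k\ne-1$, all $\ff_I$ with $i_k$ varying over those coordinates. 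If $W\ni$ one such vector we are done as above; the edge case is when $W$ avoids all of them, which forces $W$ into the common kernel — an explicit linear subspace cut out by the conditions ${}^tw H\ff_I^\vee=0$ — and one checks using $\nu$ (Lemma~\ref{lem-NI}(2)) and the single-allowed-wall hypothesis that this intersection is $0$. The role of $\alpha\beta^{-1}$: note $\delta_0=(-1)^{n+1}\gamma_1\cdots\gamma_n/(\alpha\beta)$ and that $M_0$ has a distinguished eigenvalue $\delta_0$; the vanishing $\gamma_1+\delta_0$ (and its analogues) ties the degeneracy of the commutator not just to $\gamma_k=-1$ but also, after collecting all coordinates, to $\alpha\beta^{-1}=-1$, which is why that quantity enters symmetrically.

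Second, \textbf{necessity}. Suppose two of $\gamma_1,\dots,\gamma_n,\alpha\beta^{-1}$ equal $-1$; I exhibit a proper nonzero $\Ref$-invariant subspace. The model is Kato's analysis of $F_4$ (\cite{Ka}): when, say, $\gamma_1=\gamma_2=-1$, one has $M_1^2=M_2^2=\mathrm{Id}$ and the commutator computation above degenerates — $(\gamma_1+\delta_0)$ and $(\gamma_2+\delta_0)$ can be arranged to vanish, so the corresponding $\ee$-directions drop out of the span generated by conjugating $M_0$. Concretely I would take $W$ to be the $H$-orthogonal complement (with respect to $\ff\mapsto\ff^\vee$) of the span of the vectors $g\ee_{1,\dots,1}$ actually arising from $g\in\Mon$, or dually the span of those $g\ee_{1,\dots,1}$ themselves, and verify it is $M_0$-invariant (immediate, since $\mathrm{im}\,N_0$ lies inside it or $\ker N_0$ contains it) and $\Mon$-invariant-enough to be $\Ref$-invariant; a dimension count shows it is proper and nonzero when the two walls coincide, because each extra coincidence removes one generator from the reflection orbit. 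When $\alpha\beta^{-1}=-1$ is one of the two, the same mechanism applies but the degenerating relation comes from the eigenvalue structure of $M_0$ itself rather than an $M_k$; here I would use that $\delta_0$ and the "$\alpha\beta+(-1)^{|I|}\prod\gamma_k^{i_k}$" numerators in $v$ develop a shared zero.

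The hard part is the bookkeeping in necessity: precisely identifying the invariant subspace and confirming its dimension in the doubly-degenerate case, because the structure of $\Ref$ there is genuinely different — conjugating $M_0$ no longer reaches all of $\CC^{2^n}$, and one has to pin down exactly which reflections $gM_0g^{-1}$ occur and compute the span of their images and the intersection of their kernels. The Kato reference handles $n=2$; extending it requires the tensor-product structure of the $M_k$ and a careful induction on the number of "bad" coordinates. I expect sufficiency to be comparatively mechanical once the commutator identity is in hand; the subtlety there is only ensuring the single allowed wall does not by itself destroy irreducibility, which the injectivity of $\nu$ guarantees.
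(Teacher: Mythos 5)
Your high-level architecture matches the paper's (propagate basis vectors $\ff_I$ through an invariant subspace for sufficiency; build explicit Kato-style invariant subspaces for necessity), but both halves have genuine gaps. First, your ``first key observation'' is false as stated, and your self-diagnosis of why the naive argument breaks is wrong. Every $R_I = M^I M_0 (M^I)^{-1}$ \emph{is} a $\Mon$-conjugate of $M_0$ and hence lies in $\Ref$ for all parameter values, so every $\Ref$-invariant $W$ is $R_I$-invariant and the dichotomy ``$\ff_I \in W$ or $W \subset \ker N_I$'' always holds. What actually fails at the walls is the propagation step ``one $\ff_I$ in $W$ forces all $\ff_J$ in $W$'': in the reducible cases the subspace $W^+$ of Lemma \ref{lem-red-1} contains $\ff_{0,\dots,0}=\ee_{1,\dots,1}$ but is contained in $\ker N_{(1,0,\dots,0)}$. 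The paper's propagation mechanism is the computation $N_0(M_k\ee_{1,\dots,1}) = \frac{1+\gamma_k}{\gamma_k}\,\ee_{1,\dots,1}$, which lies in $W \cap M_kW$ and is nonzero precisely when $\gamma_k \neq -1$; combined with normality of $\Ref$ and irreducibility of $W$ this gives $M_kW=W$. Your commutator identity from Proposition \ref{finite-prop} degenerates exactly when $\gamma_1=-1$, so your sketch does not cover the sub-case ``exactly one $\gamma_k=-1$, $\alpha\beta^{-1}\neq -1$''; there the paper needs a separate computation ($M_0M_1\ee_{1,0,\dots,0}$) producing a coefficient $\lambda$ proportional to $\alpha+\beta$, which is where the hypothesis $\alpha\beta^{-1}\neq -1$ genuinely enters. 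Your remark that ``injectivity of $\nu$'' handles the edge case is vacuous: $\bigcap_I \ker N_I = \mathbf{0}$ holds under (irr-$\alpha\beta\gamma$) alone and cannot distinguish one wall from two.

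For necessity your proposed subspace does not work: the span of the vectors $g\ee_{1,\dots,1}$ over $g \in \Mon$ is $\Mon$-invariant, hence equals $\CC^{2^n}$ by Proposition \ref{prop-irr}, and its $H$-orthogonal complement is $\mathbf{0}$; no dimension count can rescue this candidate. The idea you are missing is that a $\Ref$-invariant subspace need not be $\Mon$-invariant. When $\gamma_1=\gamma_2=-1$ the paper uses the $\pm 1$-eigenvectors $\ee_0$ and $2\ee_1-\ee_0$ of $G_k=\left(\begin{smallmatrix}1&1\\0&-1\end{smallmatrix}\right)$ in the two bad coordinates to build two $2^{n-1}$-dimensional subspaces $W^{\pm}$ that are \emph{swapped} by $M_1$ and $M_2$, preserved by $M_0$ and the remaining $M_k$, and therefore preserved by every conjugate $gM_0g^{-1}$ (the swaps performed by $g^{-1}$ are undone by $g$). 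The case $\gamma_1=-1$, $\alpha\beta^{-1}=-1$ is handled by the analogous splitting in one coordinate, with the identity $\alpha\beta+1=(\alpha-1)(\beta-1)$ (valid when $\alpha+\beta=0$) making $M_0$ preserve the splitting. Without this ``swapped by generators, fixed by the normal closure'' mechanism, the necessity half of your argument does not get off the ground.
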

We divide the proof into the following four Lemmas, where we always assume that 
{\it the action of $\Mon$ is irreducible}. 
\begin{lem} \label{Ref-lem1}
If none of $\gamma_1, \dots, \gamma_n$ is $-1$, then the action of $\Ref$ 
is irreducible. 
\end{lem}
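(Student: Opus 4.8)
The plan is to show that $\Ref$ acts irreducibly by proving that any $\Ref$-invariant
subspace $W \subset \CC^{2^n}$ is either $\mathbf{0}$ or everything. The key structural
input is Lemma \ref{lem-NI}: the reflections $R_I$ generate $\Ref$ (together with
conjugation, but in fact the $R_I = M^I M_0 (M^I)^{-1}$ for $I \in \{0,1\}^n$ already lie
in $\Ref$), each $N_I = E_{2^n} - R_I$ has one-dimensional image $\CC\ff_I$, and the
$\ff_I$ form a basis. So an $\Ref$-invariant subspace $W$ must satisfy $N_I W \subseteq W$
for all $I$, i.e. for each $I$ either $\ff_I \in W$ (when $N_I W \ne 0$) or
$W \subseteq \ker N_I = \{w : {}^tw H \ff_I^\vee = 0\}$.

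First I would record the dichotomy: let $J = \{I : \ff_I \in W\}$ and
$K = \{I : W \subseteq \ker N_I\}$; invariance forces $J \cup K = \{0,1\}^n$. If $J$ is
all of $\{0,1\}^n$ then $W = \CC^{2^n}$ since the $\ff_I$ span, and we are done; so assume
$W \ne \CC^{2^n}$, whence $K \ne \emptyset$ and $W \subseteq \bigcap_{I \in K} \ker N_I$.
Next I would push on the set $J$: it contains at least $\dim W$ elements (the $\ff_I$ with
$I \in J$ lie in $W$ and are linearly independent, but more importantly I want to show $J$
is large using the group action of the $M_k$'s). The crucial point is that $\Ref$ is
\emph{normal} in $\Mon$, so $W$ being $\Ref$-invariant is equivalent to $M_k W$ being
$\Ref$-invariant for each $k$; if $W$ is the (or a) minimal invariant subspace one can
play the $M_k$ around. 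Concretely, since $M_k \ff_I = \ff_{I + e_k}$ when $i_k = 0$ and
$M_k \ff_I \in \mathrm{span}(\ff_{I - e_k}, \ff_I)$ when $i_k = 1$ (from the shape of
$G_k$), the condition "none of the $\gamma_k$ is $-1$" is what makes $M_k$ act on the flag
of $\ff$'s in a way that lets us propagate membership $\ff_I \in W$ from one $I$ to all
neighbouring $I$.

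So the main line is: suppose $\ff_I \in W$ for some $I$; I claim $\ff_{I'} \in W$ for all
$I'$, which forces $W = \CC^{2^n}$, contradicting $W \ne \CC^{2^n}$ unless $W = \mathbf{0}$.
To get the claim, fix $k$ with $i_k = 1$ and consider $R_{I} $ versus $R_{I'}$ where
$I' = I - e_k$: applying the appropriate $M_k^{\pm 1}$ conjugate (legitimate since $\Ref$
is normal and $M^{I'}M_0(M^{I'})^{-1} \in \Ref$) one computes $N_{I'} w$ for $w \in W$ and
shows it is a nonzero multiple of $\ff_{I'}$ whenever $\gamma_k \ne -1$ — because the
relevant matrix entry of $G_k^{-1}$ or the coefficient $(\gamma_k+\delta_0)$-type
expression that appeared in the proof of Proposition \ref{finite-prop} is nonvanishing.
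The obstruction $\gamma_k = -1$ is exactly the case where $G_k = \begin{pmatrix} 1 & 1 \\
0 & -1 \end{pmatrix}$ has order $2$ and the propagation stalls; ruling it out by
hypothesis is what makes the argument go through. Symmetrically, to move from $I$ to
$I + e_k$ (adding an index) one uses $M_k$ directly: $M_k \ff_I = \ff_{I+e_k}$, and since
$W$ is $M_k \Ref M_k^{-1} = \Ref$-invariant one needs $W$ itself to be $M_k$-stable, which
is where one must be slightly careful — so instead I would argue at the level of the
reflections $R_{I+e_k}$ directly, showing $N_{I+e_k}$ does not annihilate $W$.

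The step I expect to be the main obstacle is precisely establishing this propagation
$\ff_I \in W \Rightarrow \ff_{I'} \in W$ for a neighbour $I'$, \emph{without} assuming $W$
is $M_k$-stable — only $\Ref$-stable. The clean way is: $W$ is $\Ref$-invariant and
$\Ref \triangleleft \Mon$, so the conjugates $M_k W$ for all words $M_k$ are also
$\Ref$-invariant, and if $\Mon$ is irreducible then $\sum_{g \in \Mon} gW$ is all of
$\CC^{2^n}$; combining this with $J \cup K = \{0,1\}^n$ and the explicit form of
$\ker N_I$ via the intersection matrix $H$, one derives that $K = \emptyset$ (using that
$H$ is nondegenerate under $(\mathrm{irr}\text{-}\alpha\beta\gamma)$, so $\bigcap_{I} \ker N_I
= \mathbf{0}$ if $K$ is too big) unless some $\gamma_k = -1$. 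Turning that dichotomy into
the sharp statement — that $K$ empty forces $J$ full forces $W = \CC^{2^n}$, and that the
only way to have a proper nonzero $W$ is a $\gamma_k = -1$ degeneracy — is the technical
heart; the rest is the bookkeeping of which $N_I$ kill which $\ff_{I'}^\vee$, read off
from the formula for $H$ in Proposition \ref{prop-rep-matrix}.
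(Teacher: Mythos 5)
Your structural setup is sound: the dichotomy ``$\ff_I \in W$ or $W \subseteq \ker N_I$'' for a $\Ref$-invariant subspace $W$, the use of normality of $\Ref$ in $\Mon$, and the identification of $\gamma_k \neq -1$ as a nonvanishing condition are all the right ingredients. But the proposal stops exactly at the step that constitutes the proof. You flag the propagation $\ff_I \in W \Rightarrow \ff_{I'} \in W$ as ``the main obstacle'' and ``the technical heart'' and never carry it out: no coefficient is ever computed, and the closing paragraph's suggestion that $K=\emptyset$ follows because ``$H$ is nondegenerate, so $\bigcap_I \ker N_I = \mathbf{0}$ if $K$ is too big'' is not an argument --- nondegeneracy only gives $\bigcap_{I\in\{0,1\}^n}\ker N_I=\mathbf{0}$, which says nothing about $\bigcap_{I\in K}\ker N_I$ for a proper subset $K$. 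As written this is a plan with the decisive step left as ``bookkeeping,'' and that step is precisely where the hypothesis enters.

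For comparison, the paper closes the gap as follows. It suffices to treat an \emph{irreducible} $\Ref$-subspace $W\neq\mathbf{0}$; by normality one may replace $W$ by $M^IW$ and assume $\ee_{1,\dots,1}=\ff_{0,\dots,0}\in W$. The one computation that matters is $N_0(M_1\ee_{1,\dots,1})=\frac{1+\gamma_1}{\gamma_1}\,\ee_{1,\dots,1}$, which lies in the $\Ref$-subspace $M_1W$ and is nonzero precisely because $\gamma_1\neq -1$; hence $\ee_{1,\dots,1}\in W\cap M_1W$, and irreducibility of \emph{both} $W$ and $M_1W$ forces $M_1W=W$. The same holds for every $M_k$, so $W$ is $\Mon$-stable and equals $\CC^{2^n}$ by Proposition \ref{prop-irr}. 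This also resolves the difficulty you correctly identify about $W$ not being known to be $M_k$-stable: one does not assume it, one deduces it from $W\cap M_kW\neq\mathbf{0}$ together with irreducibility of the two $\Ref$-modules. Your alternative route for a general invariant $W$ --- showing $N_{I'}\ff_I\neq 0$ for adjacent $I,I'$ by evaluating ${}^t\ff_I H\ff_{I'}^{\vee}$ (which indeed comes out proportional to $1+\gamma_k$) and checking connectivity of the resulting graph on $\{0,1\}^n$ --- could be made to work, but that pairing computation is the actual content of the lemma and must be performed, not deferred.
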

\begin{proof}
First note that if $\Ref$ acts on a subspace $W$, then $W$ must be a direct sum of 
1-dimensional subspaces $\CC \ff_I$ since $N_I$ acts on $W$ as an endomorphism.
Let $W \ne \mathbf{0}$ be an irreducible $\Ref$-subspace.   
Since $\Ref$ is a normal subgroup, $gW$ is also irreducible $\Ref$-subspace for 
$g \in \Mon$. Replacing $W$ by $M^IW$ with $I \in \ZZ^n$ if necessary, we assume that 
$\ee_{1,\dots,1} = \ff_{0,\dots,0} \in W$. Then we have $M_1 \ee_{1,\dots,1} \in M_1W$, and 
hence 
\begin{align*}
 N_0 (M_1 \ee_{1,\dots,1}) &= -\frac{1}{\gamma_1} N_0 (\ee_{0,1,\dots,1} - \ee_{1,\dots,1}) 
\\
&= -\frac{1}{\gamma_1}
\Big( (-\gamma_1 + (-1)^n \frac{\gamma_1 \cdots \gamma_n}{\alpha \beta})\ee_{1,\dots,1}
-(1 + (-1)^n \frac{\gamma_1 \cdots \gamma_n}{\alpha \beta})\ee_{1,\dots,1} \Big)
\\
&= \frac{1 + \gamma_1}{\gamma_1} \ee_{1,\dots,1},
\end{align*}
belongs to $M_1W$. By the assumption, this is not $\mathbf{0}$.
Therefore we have $\ee_{1,\dots,1} \in W \cap M_1W$ and $M_1W = W$ by the irreducibility. 
We see that every $M_k$ acts on $W$ in the same way. Hence we have
 $W = \CC^{2^n}$.  
\end{proof}
\begin{lem}
  If exactly one of $\gamma_1 ,\ldots ,\gamma_n$ is $-1$ and 
  $\alpha \beta^{-1}$ is not equal to $-1$, 
  then the action of $\Ref$ is irreducible.   
\end{lem}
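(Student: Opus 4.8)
The plan is to imitate the proof of Lemma~\ref{Ref-lem1} as far as it will go, isolating the single ``bad'' generator $M_1$ (the one with $\gamma_1=-1$), and then to fall back on the extra reflection $M_1M_0M_1^{-1}\in\Ref$ exactly at the step where the argument of Lemma~\ref{Ref-lem1} fails. After permuting the variables (a symmetry of $E_C(a,b,c)$), I may assume $\gamma_1=-1$ and $\gamma_2,\dots,\gamma_n\neq-1$; by hypothesis $\alpha\beta^{-1}\neq-1$, and since $\Mon$ is assumed irreducible, $(\mathrm{irr}\text{-}\alpha\beta\gamma)$ holds so that the $\ff_I$ form a basis. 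Let $W\neq\mathbf{0}$ be an irreducible $\Ref$-submodule. As recalled in the proof of Lemma~\ref{Ref-lem1}, each $N_I$ restricts to an endomorphism of $W$, hence $W=\bigoplus_{I\in S}\CC\ff_I$ for some nonempty $S\subseteq\{0,1\}^n$; replacing $W$ by $(M^{I_0})^{-1}W$ for a suitable $I_0\in S$, I may assume $\ee_{1,\dots,1}=\ff_{0,\dots,0}\in W$, i.e.\ $(0,\dots,0)\in S$.

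First I would recover the ``good'' generators $M_2,\dots,M_n$. For $k\geq2$ the computation already made in Lemma~\ref{Ref-lem1} gives $N_0(M_k\ee_{1,\dots,1})=\tfrac{1+\gamma_k}{\gamma_k}\ee_{1,\dots,1}\neq\mathbf{0}$, which lies in the irreducible $\Ref$-module $M_kW$; so $\ee_{1,\dots,1}\in W\cap M_kW$ and $M_kW=W$. Moreover, since $G_k\ee_1=-\gamma_k^{-1}\ee_0+\gamma_k^{-1}\ee_1$, for $k\geq2$ the map $M_k$ sends $\ff_I$ with $i_k=0$ to $\ff_{I'}$ (with $I'$ the flip of the $k$-th bit of $I$ to $1$) and sends $\ff_I$ with $i_k=1$ to a combination of $\ff_I$ and $\ff_{I''}$ (with $I''$ the flip of the $k$-th bit to $0$) in which both coefficients are nonzero --- the latter is where $\gamma_k\neq-1$ is used. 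From $M_kW=W$ it then follows that $S$ is stable under flipping each of the bits $2,\dots,n$, and with $(0,\dots,0)\in S$ this forces $\{I:i_1=0\}\subseteq S$.

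Next I would analyze $M_1$. Since $\gamma_1=-1$ we have $G_1=\bigl(\begin{smallmatrix}1&1\\0&-1\end{smallmatrix}\bigr)$ and $G_1^2=E_2$, so $M_1$ permutes the basis $\{\ff_I\}$ by flipping the first bit, say $M_1\ff_I=\ff_{\tau(I)}$; hence $M_1W=\bigoplus_{I\in\tau(S)}\CC\ff_I$. Because $\Ref$ is a normal subgroup of $\Mon$, $M_1W$ is again an irreducible $\Ref$-module, so $W\cap M_1W$ --- a $\Ref$-submodule of the irreducible $W$ --- is either $W$ or $\mathbf{0}$. If $W\cap M_1W=W$, then $M_1W=W$, so $W$ is stable under $M_0,M_1,\dots,M_n$ and hence under $\Mon$; irreducibility of $\Mon$ then gives $W=\CC^{2^n}$, and we are done. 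If instead $W\cap M_1W=\mathbf{0}$, then $S\cap\tau(S)=\emptyset$, which together with $\{I:i_1=0\}\subseteq S$ forces $S=\{I:i_1=0\}$, i.e.\ $W=\CC\ee_1\otimes(\CC^2)^{\otimes(n-1)}$ is the coordinate subspace spanned by the $\ee_{j_1,\dots,j_n}$ with $j_1=1$.

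The hard part will be ruling out this last configuration, and it is the only place where the hypothesis $\alpha\beta^{-1}\neq-1$ enters. Here I would use the reflection $R_{(1,0,\dots,0)}=M_1M_0M_1^{-1}\in\Ref$: its rank-one part $N_{(1,0,\dots,0)}=M_1N_0M_1^{-1}$ has image $\CC\ff_{(1,0,\dots,0)}$, which is not contained in $W$ since $(1,0,\dots,0)\notin S$, so $\Ref$-stability of $W$ forces $W\subseteq\ker N_{(1,0,\dots,0)}$. Using $M_1^{-1}=M_1$ and $N_0w=({}^tv\cdot w)\,\ee_{1,\dots,1}$ with $v$ the column vector of Proposition~\ref{prop-rep-matrix}, this condition unwinds to the equations $v_{(0,j_2,\dots,j_n)}=v_{(1,j_2,\dots,j_n)}$ for all $(j_2,\dots,j_n)\in\{0,1\}^{n-1}$. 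A direct computation with the explicit entries of $v$ (using $\gamma_1=-1$) shows that every one of these holds automatically except the one for $(j_2,\dots,j_n)=(0,\dots,0)$, which reduces to $(\alpha-1)(\beta-1)=\alpha\beta+1$, i.e.\ $\alpha+\beta=0$, i.e.\ $\alpha\beta^{-1}=-1$ --- contradicting the hypothesis. Hence the case $W\cap M_1W=\mathbf{0}$ does not occur, and $\Ref$ is irreducible. The only genuine computation is this last check with the entries of $v$; everything else is a routine adaptation of Lemma~\ref{Ref-lem1} together with the dichotomy for $W\cap M_1W$.
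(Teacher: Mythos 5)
Your proof is correct and takes essentially the same route as the paper's: both arguments first use the generators $M_k$ with $\gamma_k \neq -1$ to show that the irreducible $\Ref$-subspace $W$ containing $\ee_{1,\dots,1}$ must contain the whole coordinate subspace $\{j_1 = 1\}$, and both then bring in the conjugated reflection $M_1M_0M_1^{-1}$, whose failure to preserve that subspace reduces to the same nonvanishing $(\alpha-1)(\beta-1)-(\alpha\beta+1) = -(\alpha+\beta) \neq 0$. The only difference is packaging: the paper constructively extracts the new vector $\ee_{0,1,\dots,1} \in W$ from $M_1M_0M_1^{-1}\ee_{1,0,\dots,0}$ and propagates, whereas you set up the dichotomy $W \cap M_1W \in \{W, \mathbf{0}\}$ and kill the second branch via the kernel condition on $N_{(1,0,\dots,0)}$ --- the underlying computation is identical.
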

\begin{proof}
  For simplicity, we may assume that 
  $$
  \gamma_1=-1,\quad \gamma_k\neq -1 \ (k=2,\ldots ,n) ,\quad \beta\neq -\alpha .
  $$
  Let $W \ne \mathbf{0}$ be an irreducible $\Ref$-subspace 
  such that $\ee_{1,\dots,1} \in W$ as in the proof of Lemma \ref{Ref-lem1}. 
  By the assumption $\gamma_k\neq -1 \ (k=2,\ldots ,n)$ and the proof of  
  Lemma \ref{Ref-lem1}, we have $M_2^{i_2} \cdots M_n^{i_n} \ee_{1,\dots,1}\in W$. 
  Since 
  \begin{align*}
    W \ni M_2 \ee_{1,\dots,1} 
    &=\ee_1 \otimes (G_2 \ee_1) \otimes \cdots \otimes \ee_1
    =\ee_1 \otimes (-\gamma_2^{-1}\ee_0 +\gamma_2^{-1} \ee_1) \otimes \cdots \otimes \ee_1 \\
    &=-\gamma_2^{-1}\ee_{1,0,1,1,\dots ,1}+\gamma_2^{-1} \ee_{1,\dots, 1} ,
  \end{align*}
  and $\ee_{1,\dots ,1} \in W$, we obtain $\ee_{1,0,1,1,\dots ,1} \in W$. 
  Similar arguments show that $\ee_{1,i_2 ,\dots ,i_n} \in W$ for any 
  $(i_2,\dots ,i_n)\in \{0,1\}^{n-1}$. 
  In particular, we have $\ee_{1,0,\dots ,0}\in W$.
  Note that the condition $\gamma_1 =-1$ 
  implies $G_1=\begin{pmatrix} 1 & 1 \\ 0 & -1 \end{pmatrix}$ and  
  \begin{align*}
    M_1 \ee_{0,i_2,\dots ,i_n} =\ee_{0,i_2,\dots ,i_n} ,\quad
    M_1 \ee_{1,i_2,\dots ,i_n} =\ee_{0,i_2,\dots ,i_n}-\ee_{1,i_2,\dots ,i_n} .
  \end{align*}
  By Proposition \ref{prop-rep-matrix}, we have 
  \begin{align*}
    &M_0 M_1 \ee_{1,0,\dots ,0} =M_0(\ee_{0,0,\dots ,0}-\ee_{1,0,\dots ,0}) \\
    &=\left( \ee_{0,0,\dots ,0}
      +(-1)^n \frac{(\alpha-1)(\beta-1)\prod_{k=2}^n \gamma_k}{\alpha \beta}\ee_{1,\dots, 1} \right) 
    -\left( \ee_{1,0,\dots ,0} -(-1)^{n+1} 
      \frac{(\alpha \beta +1)\prod_{k=2}^n \gamma_k}{\alpha \beta}\ee_{1,\dots, 1} \right) \\
    &=\ee_{0,0,\dots ,0}-\ee_{1,0,\dots ,0} +\lambda \ee_{1,\dots, 1} ,
  \end{align*}
  where we put 
  $$ 
  \lambda =
  (-1)^n \frac{\prod_{k=2}^n \gamma_k}{\alpha \beta}\Big( (\alpha-1)(\beta-1)-(\alpha \beta +1) \Big)
  =(-1)^n \frac{\prod_{k=2}^n \gamma_k}{\alpha \beta} (-\alpha -\beta) . 
  $$
  By the condition $\beta \neq -\alpha$, we have $\lambda \neq 0$. 
  Because of 
  \begin{align*}
    W \ni M_1 M_0 M_1^{-1}\ee_{1,0,\dots ,0} 
    &= M_1 M_0 M_1 \ee_{1,0,\dots ,0}\\
    &=M_1 (\ee_{0,0,\dots ,0}-\ee_{1,0,\dots ,0} +\lambda \ee_{1,\dots, 1}) \\
    &=\ee_{1,0,\dots ,0} +\lambda \ee_{0,1,\dots ,1}-\lambda \ee_{1,\dots ,1}, 
  \end{align*}
  $\ee_{1,0,\dots ,0} ,\ee_{1,\dots ,1} \in W$ and $\lambda \neq 0$, 
  we obtain $\ee_{0,1,\dots ,1} \in W$.
  By using $M_2^{i_2} \cdots M_n^{i_n} \ee_{0,1,\dots ,1}\in W$, 
  we can show that $\ee_{i_1,i_2 ,\dots ,i_n} \in W$ for all $(i_1,\dots ,i_n)\in \{0,1\}^{n}$. 
  Hence we obtain $W=\CC^{2^n}$. 
\end{proof}

\begin{lem}\label{lem-red-1}
  If at least two of $\gamma_1 ,\ldots ,\gamma_n$ are $-1$, 
  then the action of $\Ref$ is reducible. 
\end{lem}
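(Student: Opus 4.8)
\textbf{Proof proposal for Lemma \ref{lem-red-1}.}

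The plan is to produce an explicit proper nonzero subspace $W \subsetneq \CC^{2^n}$ that is invariant under every reflection $R_J = M^J M_0 (M^J)^{-1}$, and hence under $\Ref$. As observed at the start of the proof of Lemma \ref{Ref-lem1}, any $\Ref$-invariant subspace $W$ is automatically a direct sum of lines $\CC\ff_J$, because each $N_J$ acts on $W$ as an endomorphism whose image is $\CC\ff_J$ (Lemma \ref{lem-NI}); so it is natural to look for $W$ of the form $\bigoplus_{J \in T} \CC \ff_J$ for a suitable proper nonempty subset $T \subset \{0,1\}^n$. Using Lemma \ref{lem-NI}(1), $R_J$ fixes $W$ pointwise modulo $\CC\ff_J$, so $R_J W \subseteq W$ is equivalent to: for every $w \in W$, either $\ff_J \in W$ (i.e. $J \in T$) or ${}^tw\, H\, \ff_J^{\vee} = 0$. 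Thus $W = \bigoplus_{J \in T}\CC\ff_J$ is $\Ref$-invariant precisely when, for every $J \notin T$ and every $I \in T$, one has ${}^t\ff_I\, H\, \ff_J^{\vee} = 0$; that is, the Gram-type pairing $\langle \ff_I, \ff_J\rangle := {}^t\ff_I H \ff_J^{\vee}$ vanishes whenever $I \in T$, $J \notin T$.

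After relabelling we may assume $\gamma_1 = \gamma_2 = -1$. The key computational step is to evaluate $\langle \ff_I, \ff_J\rangle$ in terms of the entries $H_{I',J'}$ given in Proposition \ref{prop-rep-matrix}, using $\ff_I = M^I \ee_{1,\dots,1}$ and the formula $M^I \ee_{j_1,\dots,j_n} = (G_1^{i_1}\ee_{j_1}) \otimes \cdots \otimes (G_n^{i_n}\ee_{j_n})$; since $\gamma_1 = \gamma_2 = -1$ the matrices $G_1, G_2$ satisfy $G_k^2 = E_2$, so the $\ff_I$ only depend on $i_1, i_2$ modulo $2$ and, more importantly, the first two tensor slots of $H$ degenerate. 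I expect that with $\gamma_1 = \gamma_2 = -1$ the pairing $\langle \ff_I, \ff_J\rangle$ depends on the first two coordinates of $I, J$ only through the parity of $i_1 + i_2 + i_1' + i_2'$ (compare the $(-1)^{|I\cdot I'|}$ and $(-1)^{n+|I|}$ factors in $H$ and in $v$), which suggests taking
\[
T = \{\, J = (j_1,\dots,j_n) \in \{0,1\}^n \ \mid \ j_1 + j_2 \equiv 0 \ (\mathrm{mod}\ 2)\,\},
\]
a set of size $2^{n-1}$, so that $W = \bigoplus_{J \in T}\CC\ff_J$ is proper and nonzero. One then checks the single orthogonality identity ${}^t\ff_I H \ff_J^{\vee} = 0$ for $i_1+i_2$ even and $j_1+j_2$ odd; by the $M_k$-equivariance of $H$ (the relation ${}^t M_i H M_i^{\vee} = H$) this reduces to a bounded number of cases, essentially $\langle \ff_{0\dots0}, \ff_{(1,0,0\dots0)}\rangle$ and its relatives, which are finite expressions in $\alpha, \beta$ and $\gamma_3,\dots,\gamma_n$ that should vanish identically once $\gamma_1=\gamma_2=-1$ is imposed.

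The main obstacle I anticipate is purely bookkeeping: correctly tracking the tensor-slot structure of $H$ and of the vector $v$ defining $M_0$ when two of the $\gamma_k$ equal $-1$, and verifying that the candidate $T$ above (rather than some other parity or coordinate condition) is the one that makes all cross-pairings vanish. If the naive "parity of $j_1+j_2$" choice does not work verbatim, the fallback is to exhibit the invariant subspace more intrinsically: with $\gamma_1=\gamma_2=-1$ the operators $M_1, M_2$ generate a finite (Klein four) group commuting appropriately with $M_0$, and one can decompose $\CC^{2^n} = (\CC^2)^{\otimes 2}\otimes(\CC^2)^{\otimes(n-2)}$ according to the $\pm1$ eigenspaces of the involution $M_1M_2$ acting on the first two factors, checking that $M_0$ and all its conjugates preserve this decomposition because the relevant entries of $v$ and $H$ are constant on $M_1M_2$-eigenspaces. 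Either way, once one proper nonzero $\Ref$-invariant subspace is produced, reducibility of $\Ref$ follows immediately and the lemma is proved.
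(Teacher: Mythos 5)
Your candidate subspace is in fact the correct one, and it coincides with the paper's: with $\gamma_1=\gamma_2=-1$ one has $G_1\ee_1=\ee_0-\ee_1$ and $G_1^2=G_2^2=E_2$, and a short computation shows that your
$W=\bigoplus_{j_1+j_2\equiv 0}\CC\ff_J$ equals
$\left\langle \ee_1\otimes\ee_1,\ (\ee_0-\ee_1)\otimes(\ee_0-\ee_1)\right\rangle_{\CC}\otimes(\CC^2)^{\otimes(n-2)}$,
which is exactly the space $W^{+}$ of the paper's proof (there written in the basis of $G_1,G_2$-eigenvectors $\ee_0$ and $2\ee_1-\ee_0$); the odd-parity span is the paper's $W^{-}$, and the orthogonality relations you predict do hold. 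The genuine gap is in your invariance criterion. You assert that $W=\bigoplus_{J\in T}\CC\ff_J$ is $\Ref$-invariant \emph{precisely when} ${}^t\ff_I H\ff_J^{\vee}=0$ for all $I\in T$, $J\in\{0,1\}^n\setminus T$. That condition is equivalent only to invariance under the $2^n$ particular reflections $R_J=M^JM_0(M^J)^{-1}$ with $J\in\{0,1\}^n$, and these do \emph{not} generate $\Ref$: by definition $\Ref$ is generated by $gM_0g^{-1}$ for all $g\in\Mon$, and even after one reduces to conjugates by the abelian group $\langle M_1,\dots,M_n\rangle$ (itself a step needing proof: the subgroup generated by $\{aM_0a^{-1}\}$ contains $M_0$, hence is closed under conjugation by $M_0$ and is therefore normal), one must allow all exponents $J\in\ZZ^n$, and for $k\ge 3$ the matrix $M_k$ has infinite order for generic $\gamma_k$, so e.g. $R_{(0,0,2,0,\dots,0)}$ is not among your generators. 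So as written you have verified invariance only under a proper subset of generators of $\Ref$. The paper avoids this entirely by checking how each $M_i$ moves the pair $W^{\pm}$ ($M_0$ and $M_k$, $k\ge3$, preserve each of $W^{+},W^{-}$, while $M_1,M_2$ swap them), whence \emph{every} word $g$ sends $W^{\pm}$ to $W^{\pm}$ or $W^{\mp}$ and $gM_0g^{-1}W^{\pm}=W^{\pm}$ follows formally. If you want to keep your Gram-matrix formulation, you must first prove $\Ref=\langle R_J : J\in\ZZ^n\rangle$ and then check the orthogonality for all $J\in\ZZ^n$ of odd parity; this reduces to showing $W^{-}\subseteq\ker N_0$, which is essentially the paper's computation $M_0f^{-}=f^{-}$.

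Two smaller points. First, the key identities are only asserted (``should vanish identically''), not verified; the actual computation of $M_0$ on these subspaces, using the explicit vector $v$ from Proposition \ref{prop-rep-matrix}, is the substance of the paper's proof and cannot be skipped. Second, your fallback does not work as stated: the $\pm1$-eigenspaces of $M_1M_2$ are spanned (in the first two slots) by $\{\ee_0\otimes\ee_0,\ (2\ee_1-\ee_0)\otimes(2\ee_1-\ee_0)\}$ and by the mixed tensors respectively, and neither is $M_0$-invariant, because $M_0$ shifts vectors by multiples of $\ee_{1,\dots,1}$, which has nonzero components in \emph{both} eigenspaces. The correct subspaces $W^{\pm}$ are not $M_1M_2$-eigenspaces but the $\pm1$-eigenspaces of the involution interchanging $\ee_0\otimes\ee_0\leftrightarrow(2\ee_1-\ee_0)\otimes(2\ee_1-\ee_0)$ and $(2\ee_1-\ee_0)\otimes\ee_0\leftrightarrow\ee_0\otimes(2\ee_1-\ee_0)$; your primary parity candidate already produces them, so the fallback should simply be dropped.
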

\begin{lem}\label{lem-red-2}
  If at least one of $\gamma_1 ,\ldots ,\gamma_n$ is $-1$ and 
  $\alpha \beta^{-1}$ is $-1$, 
  then the action of $\Ref$ is reducible. 
\end{lem}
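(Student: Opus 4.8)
\textbf{Proof plan for Lemma \ref{lem-red-2}.}
The strategy is to exhibit an explicit proper nonzero $\Ref$-invariant subspace. Since every $N_I$ acts on any $\Ref$-invariant subspace $W$ as an endomorphism, and $N_I$ has image $\CC\ff_I$ (Lemma \ref{lem-NI}(1)), such a $W$ is automatically a direct sum of some of the lines $\CC\ff_I$; dually, $W$ is cut out by a set of the linear forms $w\mapsto {}^tw H \ff_I^{\vee}$. So the plan is to look for a subset $T\subset\{0,1\}^n$ such that $W_T:=\bigoplus_{I\in T}\CC\ff_I$ is stable under every $R_J=E_{2^n}-N_J$, equivalently under every $N_J$, equivalently (by Lemma \ref{lem-NI}) such that for each $J\notin T$ the line $\CC\ff_J$ meets $W_T$ inside $\ker N_J$, i.e. ${}^t\ff_J H\ff_J^{\vee}=0$, and for $I\in T$, $J\notin T$ we have $N_J\ff_I\in W_T$, which since $N_J\ff_I\in\CC\ff_J$ forces ${}^t\ff_I H\ff_J^{\vee}=0$. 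Thus it suffices to find $T$ with ${}^t\ff_I H\ff_J^{\vee}=0$ for all $I\in T$ and all $J\notin T$ (and also $N_J$ maps $\ff_J$, $J\notin T$, into the complementary sum, which is the $|I\cdot J|$-type entry being zero when $\alpha\beta^{-1}=-1$). The upshot is that $H$, under the specialization $\alpha\beta^{-1}=-1$ and $\gamma_\ell=-1$ for some $\ell$, should become block-diagonal after reindexing $\{0,1\}^n$ by some parity.

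Concretely, I would normalize so that $\gamma_1=-1$ and $\beta=-\alpha$, and examine the intersection matrix $H$ in Proposition \ref{prop-rep-matrix}. Under $\beta=-\alpha$ the numerator $\alpha\beta+(-1)^{|I\cdot I'|}\prod_k\gamma_k^{i_ki'_k}$ in the "otherwise" case becomes $-\alpha^2+(-1)^{|I\cdot I'|}\prod_k\gamma_k^{i_ki'_k}$; with $\gamma_1=-1$ this is $-\alpha^2+(-1)^{|I\cdot I'|}(-1)^{i_1i'_1}\prod_{k\ge2}\gamma_k^{i_ki'_k}$. The point is to show this vanishes precisely along the off-diagonal blocks of a partition of indices into two classes — the natural candidate being the parity of $i_1$, or perhaps the parity of $|I|$, chosen so that the first-coordinate sign $(-1)^{i_1i'_1}$ and the factor $(1-\gamma_1)^{(1-i_1)(1-i'_1)}=2^{(1-i_1)(1-i'_1)}$ conspire. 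In terms of the $\ff_I$ and $H$, rather than working with the raw matrix I would use the cleaner description: ${}^t\ff_I H \ff_J^{\vee}$ is (up to nonzero scalars coming from the $G_k^{i_k}$) a rescaling of $H_{I,J}$, so the vanishing pattern of ${}^t\ff_I H\ff_J^{\vee}$ is governed by the vanishing pattern of the $H_{I,J}$. Then I partition $\{0,1\}^n=T\sqcup T^c$ and check ${}^t\ff_I H\ff_J^{\vee}=0$ whenever $I,J$ lie in different classes.

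The cleanest route is probably this: when $\gamma_1=-1$, the first tensor factor $\CC^2$ carries the action of $G_1=\begin{pmatrix}1&1\\0&-1\end{pmatrix}$, which has the two eigenlines $\CC\ee_1$ (eigenvalue $-1$) and $\CC(\ee_0+\tfrac12\ee_1)$ — more usefully, $\CC\ee_0$ is $G_1$-invariant and $G_1$ acts on the quotient. One expects $\Ref$ to respect a tensor decomposition $\CC^2\otimes(\CC^2)^{\otimes(n-1)}$ in which $M_0$ itself, under $\beta=-\alpha$, becomes compatible with a splitting. I would compute $M_0$ explicitly under the specialization and show its lower-left block entry $v$ has the property that, combined with $\gamma_1=-1$, the endomorphism $N_0$ (hence every $N_I=M^IN_0(M^I)^{-1}$) preserves the subspace $\mathrm{span}\{\ee_I : i_1=0\}\oplus\mathrm{span}\{\text{one correction vector}\}$ — i.e. find the explicit $T$. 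The main obstacle I anticipate is purely bookkeeping: identifying the correct subset $T$ (it will involve a parity condition entangling the index $i_1$ with $|I|$ and the entries of $v$), and then verifying the vanishing ${}^t\ff_I H \ff_J^{\vee}=0$ across the two classes, which requires handling the three cases ($I\cdot J=0$ versus not, and $I$ or $J$ equal to $(0,\dots,0)$) of the formula for $H$ simultaneously. Once $T$ is pinned down, each case is a one-line substitution using $\beta=-\alpha$ and $\gamma_1=-1$. I would also double-check consistency with Lemma \ref{lem-red-1}: when two of the $\gamma$'s are $-1$ the same block structure should degenerate further, confirming the candidate $T$ is the right one. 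Finally, since $\dim W_T=|T|$ with $0<|T|<2^n$, this proper invariant subspace establishes reducibility of $\Ref$.
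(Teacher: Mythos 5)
Your candidate subspace is the right one: after normalizing $\gamma_1=-1$, the partition of $\{0,1\}^n$ by the parity of $i_1$ gives exactly the paper's invariant subspaces, namely $W^{+}=\bigoplus_{i_1=0}\CC\ff_I=\ee_1\otimes(\CC^2)^{\otimes(n-1)}$ and $W^{-}=\bigoplus_{i_1=1}\CC\ff_I=(\ee_0-\ee_1)\otimes(\CC^2)^{\otimes(n-1)}$; the paper writes these via $f_{12;i_2,\dots,i_n}^{\pm}=h_{0,i_2,\dots,i_n}\pm h_{1,i_2,\dots,i_n}$, which equal $2\,\ee_1\otimes\ee_{i_2}\otimes\cdots\otimes\ee_{i_n}$ and $2(\ee_0-\ee_1)\otimes\ee_{i_2}\otimes\cdots\otimes\ee_{i_n}$. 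The hypothesis $\alpha\beta^{-1}=-1$ enters exactly where you anticipate: it makes the exceptional $(0,\dots,0)$-entry of $v$ agree with the generic formula, which is what lets $M_0$ preserve both spaces.

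There is, however, a genuine gap in your reduction. You propose to verify invariance of $W_T$ only under the $2^n$ reflections $R_J=M^JM_0(M^J)^{-1}$ with $J\in\{0,1\}^n$, via the orthogonality ${}^t\ff_I H\ff_J^{\vee}=0$ across the partition. But $\Ref$ is generated by \emph{all} conjugates $gM_0g^{-1}$ with $g\in\Mon$ (equivalently by $M^IM_0M^{-I}$ with $I\in\ZZ^n$ together with their $M_0$-conjugates), and for exponents outside $\{0,1\}^n$ the center $M^I\ee_{1,\dots,1}$ of the reflection is in general not proportional to any $\ff_J$ (for instance $G_k^2\ee_1$ is proportional to neither $\ee_1$ nor $G_k\ee_1$ unless $\gamma_k=-1$), so stability under the $R_J$ alone does not yield stability under $\Ref$. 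The missing step --- and the way the paper closes the argument --- is to control the full $\Mon$-orbit of the candidate subspace: one checks that $M_k$ ($k\ge 2$) preserves both $W^{+}$ and $W^{-}$, that $M_1$ swaps them, and that $M_0$ preserves both; then for any word $g$, the space $g^{-1}W^{\pm}$ is $W^{\pm}$ or $W^{\mp}$ according to the parity of the occurrences of $M_1^{\pm1}$ in $g$, $M_0$ fixes it, and $g$ carries it back, so $gM_0g^{-1}W^{\pm}=W^{\pm}$. Two smaller issues: you never actually pin down $T$ or carry out the orthogonality computation, and the proposed shortcut --- that ${}^t\ff_I H\ff_J^{\vee}$ is a nonzero rescaling of $H_{I,J}$ --- is false, since $\ff_I$ is a genuine linear combination of several basis vectors $\ee_K$ rather than a monomial rescaling of $\ee_I$, so ${}^t\ff_I H\ff_J^{\vee}$ is a sum of many entries of $H$ and its vanishing pattern must be computed, not read off from that of $H$.
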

We show these lemmas by applying ideas given in the proofs of \cite[Lemmas 4.2 and 4.3]{Ka}. 
To prove the lemmas, we use relations 
\begin{align}
  \label{eq-G1}
  \begin{pmatrix} 1 & 1 \\ 0 & -1 \end{pmatrix} \ee_0 =\ee_0 ,\quad 
  \begin{pmatrix} 1 & 1 \\ 0 & -1 \end{pmatrix} \ee_1 =\ee_0-\ee_1 ,\quad 
  \begin{pmatrix} 1 & 1 \\ 0 & -1 \end{pmatrix} (2\ee_1 -\ee_0) =-(2\ee_1 -\ee_0) .
\end{align}
\begin{proof}[Proof of Lemma \ref{lem-red-1}]  
  For simplicity, we may assume that $\gamma_1 =\gamma_2 =-1$. 
  For each $(i_3 ,\dots ,i_n)$, we put  
  \begin{align*}
    g_{0,0,i_3,\dots ,i_n} &= \ee_{0,0,i_3,\dots ,i_n} 
    =\ee_0 \otimes \ee_0 \otimes \ee_{i_3} \otimes \cdots \otimes \ee_{i_n} , \\
    g_{1,0,i_3,\dots ,i_n} &= 2\ee_{1,0,i_3,\dots ,i_n}- \ee_{0,0,i_3,\dots ,i_n} 
    =(2\ee_1 -\ee_0) \otimes \ee_0 \otimes \ee_{i_3} \otimes \cdots \otimes \ee_{i_n} , \\
    g_{0,1,i_3,\dots ,i_n} &= 2\ee_{0,1,i_3,\dots ,i_n}- \ee_{0,0,i_3,\dots ,i_n} 
    =\ee_0 \otimes (2\ee_1 -\ee_0) \otimes \ee_{i_3} \otimes \cdots \otimes \ee_{i_n} , \\
    g_{1,1,i_3,\dots ,i_n} &= 4\ee_{1,1,i_3,\dots ,i_n} -2\ee_{1,0,i_3,\dots ,i_n}
    -2\ee_{0,1,i_3,\dots ,i_n}+ \ee_{0,0,i_3,\dots ,i_n} 
    =(2\ee_1 -\ee_0) \otimes (2\ee_1 -\ee_0) \otimes \ee_{i_3} \otimes \cdots \otimes \ee_{i_n} ,\\
    f_{14;i_3 ,\dots ,i_n}^{\pm} &=g_{0,0,i_3,\dots ,i_n} \pm g_{1,1,i_3,\dots ,i_n} , \qquad 
    f_{23;i_3 ,\dots ,i_n}^{\pm} =g_{1,0,i_3,\dots ,i_n} \pm g_{0,1,i_3,\dots ,i_n} .
  \end{align*}
  We consider proper subspaces 
  \begin{align*}
    W^{\pm} = \bigoplus_{(i_3 ,\dots ,i_n)} \CC f_{14;i_3 ,\dots ,i_n}^{\pm}
    \oplus \bigoplus_{(i_3 ,\dots ,i_n)} \CC f_{23;i_3 ,\dots ,i_n}^{\pm}
  \end{align*}
  of $\CC^{2^n}$ whose dimensions are $2\cdot 2^{n-2}=2^{n-1}$. 
  We show that these are non-trivial $\Ref$-subspaces. 
  Note that by the definition, we have 
  $$
  \ee_{1,\dots ,1}
  =\frac{1}{4} (g_{0,0,1,\dots ,1}+g_{1,0,1,\dots ,1}+g_{0,1,1,\dots ,1}+g_{1,1,1,\dots ,1})
  =\frac{1}{4} (f_{14;1,\dots ,1}^{+}+f_{23;1,\dots ,1}^{+} ) \in W^{+} .
  $$

  By (\ref{eq-G1}), actions of $M_1$ and $M_2$ are given as 
  \begin{align*}
    M_1 : \left\{
      \begin{array}{l}
        g_{0,i_2,i_3,\dots ,i_n} \mapsto g_{0,i_2,i_3,\dots ,i_n} \\
        g_{1,i_2,i_3,\dots ,i_n} \mapsto -g_{1,i_2,i_3,\dots ,i_n}
      \end{array}
    \right. ,\qquad 
    M_2 : \left\{
      \begin{array}{l}
        g_{i_1,0,i_3,\dots ,i_n} \mapsto g_{i_1,0,i_3,\dots ,i_n} \\
        g_{i_1,1,i_3,\dots ,i_n} \mapsto -g_{i_1,1,i_3,\dots ,i_n} ,
      \end{array}
    \right. 
  \end{align*}
  and hence 
  \begin{align*}
    M_1 : \left\{
      \begin{array}{l}
        f_{14;i_3 ,\dots ,i_n}^{\pm} \mapsto f_{14;i_3 ,\dots ,i_n}^{\mp} \\
        f_{23;i_3 ,\dots ,i_n}^{\pm} \mapsto -f_{23;i_3 ,\dots ,i_n}^{\mp}
      \end{array}
    \right. ,\qquad 
    M_2 : \left\{
      \begin{array}{l}
        f_{14;i_3 ,\dots ,i_n}^{\pm} \mapsto f_{14;i_3 ,\dots ,i_n}^{\mp} \\
        f_{23;i_3 ,\dots ,i_n}^{\pm} \mapsto f_{23;i_3 ,\dots ,i_n}^{\mp} .
      \end{array}
    \right. 
  \end{align*}
  These imply that 
  $M_1 W^{\pm} =W^{\mp}$,
  $M_2 W^{\pm} =W^{\mp}$. 
    
  We can show that 
  $M_i W^{\pm} =W^{\pm}$ $(i=3,\dots ,n)$. 
  For example, if $i=3$, we have 
  \begin{align*}
    M_3 \cdot g_{i_1,i_2,0,i_4,\dots ,i_n}
    =g_{i_1,i_2,0,i_4,\dots ,i_n}, \quad 
    M_3 \cdot g_{i_1,i_2,1,i_4,\dots ,i_n}
    =-\gamma_3^{-1} g_{i_1,i_2,0,i_4,\dots ,i_n} +\gamma_3^{-1} g_{i_1,i_2,1,i_4,\dots ,i_n}
  \end{align*}
  and 
  \begin{align*}
    &M_3 \cdot f_{14; 0,i_4,\dots ,i_n}^{\pm}
    =f_{14; 0,i_4,\dots ,i_n}^{\pm}, \quad 
    M_3 \cdot f_{14; 1,i_4,\dots ,i_n}^{\pm}
    =-\gamma_3^{-1} f_{14; 0,i_4,\dots ,i_n}^{\pm} +\gamma_3^{-1} f_{14; 1,i_4,\dots ,i_n}^{\pm} ,\\
    &M_3 \cdot f_{23; 0,i_4,\dots ,i_n}^{\pm}
    =f_{23; 0,i_4,\dots ,i_n}^{\pm}, \quad 
    M_3 \cdot f_{23; 1,i_4,\dots ,i_n}^{\pm}
    =-\gamma_3^{-1} f_{23; 0,i_4,\dots ,i_n}^{\pm} +\gamma_3^{-1} f_{23; 1,i_4,\dots ,i_n}^{\pm} , 
  \end{align*}
  which imply $M_3 W^{\pm} =W^{\pm}$. 
  
  By Proposition \ref{prop-rep-matrix} and $\gamma_1 =\gamma_2 =-1$, we have  
  \begin{align*}
    M_0 \ee_{0,0,i_3 ,\dots ,i_n} 
    &=\ee_{0,0,i_3 ,\dots ,i_n}-\lambda_{0;i_3 ,\dots ,i_n}\ee_{1,\dots ,1}  ,\\
    M_0 \ee_{1,0,i_3 ,\dots ,i_n} 
    &=\ee_{1,0,i_3 ,\dots ,i_n}-\lambda_{1;i_3 ,\dots ,i_n}\ee_{1,\dots ,1}, \\
    M_0 \ee_{0,1,i_3 ,\dots ,i_n} 
    &=\ee_{0,1,i_3 ,\dots ,i_n}-\lambda_{1;i_3 ,\dots ,i_n}\ee_{1,\dots ,1}, \\
    M_0 \ee_{1,1,i_3 ,\dots ,i_n} 
    &=\ee_{1,1,i_3 ,\dots ,i_n}-\lambda_{1;i_3 ,\dots ,i_n}\ee_{1,\dots ,1} ,
  \end{align*}
  where 
  \begin{align*}
    &\lambda_{1;i_3 ,\dots ,i_n} =(-1)^{n+i_3+\dots +i_n}
    \frac{(\alpha \beta +(-1)^{i_3+\dots +i_n} 
      \prod_{k=3}^n \gamma_k^{i_k})\prod_{k=3}^n \gamma_k^{1-i_k}}{\alpha \beta} ,\\
    &\lambda_{0;i_3 ,\dots ,i_n} =\left\{ 
      \begin{array}{ll}
        (-1)^n \frac{(\alpha-1)(\beta-1)\prod_{k=3}^n \gamma_k}{\alpha \beta}
        & ((i_3 ,\dots ,i_n)=(0,\dots ,0)) \\
        \lambda_{1;i_3 ,\dots ,i_n}
        & ((i_3 ,\dots ,i_n)\neq (0,\dots ,0)) .
      \end{array}
    \right. 
  \end{align*}
  Thus we obtain 
  \begin{align*}
    M_0 \cdot g_{0,0,i_3,\dots ,i_n} 
    &=g_{0,0,i_3 ,\dots ,i_n}-\lambda_{0;i_3 ,\dots ,i_n}\ee_{1,\dots ,1}  , \\
    M_0 \cdot g_{1,0,i_3,\dots ,i_n} 
    &=g_{1,0,i_3,\dots ,i_n}-(2\lambda_{1;i_3 ,\dots ,i_n} -\lambda_{0;i_3 ,\dots ,i_n})\ee_{1,\dots ,1} , \\
    M_0 \cdot g_{0,1,i_3,\dots ,i_n} 
    &=g_{0,1,i_3,\dots ,i_n} -(2\lambda_{1;i_3 ,\dots ,i_n} -\lambda_{0;i_3 ,\dots ,i_n})\ee_{1,\dots ,1} , \\
    M_0 \cdot g_{1,1,i_3,\dots ,i_n} 
    &=g_{1,1,i_3,\dots ,i_n}-\lambda_{0;i_3 ,\dots ,i_n}\ee_{1,\dots ,1} , 
  \end{align*}
  and 
  \begin{align*}
    M_0 \cdot f_{14;i_3,\dots ,i_n}^{-}&=f_{14;i_3,\dots ,i_n}^{-}\in W^{-} , \quad 
    M_0 \cdot f_{23;i_3,\dots ,i_n}^{-}=f_{23;i_3,\dots ,i_n}^{-}\in W^{-} , \\
    M_0 \cdot f_{14;i_3,\dots ,i_n}^{+}&=f_{14;i_3,\dots ,i_n}^{+}
    -2\lambda_{0;i_3 ,\dots ,i_n}\ee_{1,\dots ,1} \in W^{+}, \\ 
    M_0 \cdot f_{23;i_3,\dots ,i_n}^{+}&=f_{23;i_3,\dots ,i_n}^{+}
    -2(2\lambda_{1;i_3 ,\dots ,i_n} -\lambda_{0;i_3 ,\dots ,i_n})\ee_{1,\dots ,1} \in W^{+} .
  \end{align*}
  These mean $M_0 W^{\pm} =W^{\pm}$. 

  Now we show that $W^{\pm}$ are $\Ref$-subspaces. 
  To prove this claim, it is sufficient to see that $(gM_0 g^{-1})W^{\pm} =W^{\pm}$, 
  for each generator $gM_0 g^{-1}$ ($g\in \Mon$) of $\Ref$. 
  Since $g$ is represented as a product of $M_0^{\pm 1} ,\dots ,M_n^{\pm 1}$, 
  $g^{-1}$ maps $W^{\pm}$ as follows:
  \begin{align*}
    g^{-1}: \left\{
      \begin{array}{ll}
        W^{\pm} \to W^{\pm} & (\textrm{the number of $M_1^{\pm 1}$ and $M_2^{\pm 1}$ in $g$ is even}) \\
        W^{\pm} \to W^{\mp} & (\textrm{the number of $M_1^{\pm 1}$ and $M_2^{\pm 1}$ in $g$ is odd}) .
      \end{array}
    \right.
  \end{align*}
  By $M_0 W^{\pm} =W^{\pm}$, we thus obtain 
  \begin{align*}
    gM_0 g^{-1}: \left\{
      \begin{array}{ll}
        W^{\pm} \xrightarrow{g^{-1}} W^{\pm} \xrightarrow{M_0} W^{\pm} \xrightarrow{g} W^{\pm} 
        & (\textrm{the number of $M_1^{\pm 1}$ and $M_2^{\pm 1}$ in $g$ is even}) \\
        W^{\pm} \xrightarrow{g^{-1}} W^{\mp} \xrightarrow{M_0} W^{\mp} \xrightarrow{g} W^{\pm}
        & (\textrm{the number of $M_1^{\pm 1}$ and $M_2^{\pm 1}$ in $g$ is odd}). 
      \end{array}
    \right.
  \end{align*}
  Therefore, $W^{\pm}$ are non-trivial $\Ref$-subspaces, and 
  the proof is completed. 
\end{proof}

\begin{proof}[Proof of Lemma \ref{lem-red-2}]  
  For simplicity, we assume $\gamma_1 =-1$. 
  For each $(i_2 ,\dots ,i_n)$, we put 
  \begin{align*}
    h_{0,i_2,i_3,\dots ,i_n} &= \ee_{0,i_2,i_3,\dots ,i_n} 
    =\ee_0 \otimes \ee_{i_2} \otimes \ee_{i_3} \otimes \cdots \otimes \ee_{i_n} , \\
    h_{1,i_2,i_3,\dots ,i_n} &= 2\ee_{1,i_2,i_3,\dots ,i_n}- \ee_{0,i_2,i_3,\dots ,i_n} 
    =(2\ee_1 -\ee_0) \otimes \ee_{i_2} \otimes \ee_{i_3} \otimes \cdots \otimes \ee_{i_n} , \\
    f_{12;i_2 ,\dots ,i_n}^{\pm} &=h_{0,i_2,i_3,\dots ,i_n} \pm h_{1,i_2,i_3,\dots ,i_n}  .
  \end{align*}
  We consider proper subspaces 
  \begin{align*}
    W^{\pm} = \bigoplus_{(i_2 ,\dots ,i_n)} \CC f_{12;i_2 ,\dots ,i_n}^{\pm}
  \end{align*}
  of $\CC^{2^n}$ whose dimensions are $2^{n-1}$. 
  We show that these are non-trivial $\Ref$-subspaces. 
  Note that by the definition, we have 
  $$
  \ee_{1,\dots ,1}
  =\frac{1}{2}(h_{0,1,1,\dots ,1}+h_{1,1,1,\dots ,1})
  =\frac{1}{2}f_{12;1,\dots ,1}^{+} \in W^{+} .
  $$
  
  By arguments similar to those in Proof of Lemma \ref{lem-red-1}, we obtain 
  \begin{align*}
    M_1 W^{\pm} =W^{\mp},\quad 
    M_i W^{\pm} =W^{\pm} \ (i=2,\dots ,n). 
  \end{align*}

  We put 
  \begin{align*}
    \lambda_{i_2 ,\dots ,i_n} =(-1)^{n+i_2+\dots +i_n}
    \frac{(\alpha \beta +(-1)^{i_2+\dots +i_n} 
      \prod_{k=2}^n \gamma_k^{i_k})\prod_{k=2}^n \gamma_k^{1-i_k}}{\alpha \beta} .
  \end{align*}
  Since $\alpha +\beta=0$ by the assumption of the lemma, 
  $\lambda_{0 ,\dots ,0}$ is written as 
  \begin{align*}
    \lambda_{0 ,\dots ,0} =(-1)^{n}
    \frac{(\alpha \beta +1)\prod_{k=2}^n \gamma_k}{\alpha \beta}
    =(-1)^n \frac{(\alpha -1)(\beta -1)\prod_{k=2}^n \gamma_k}{\alpha \beta} .
  \end{align*}
  By Proposition \ref{prop-rep-matrix} and $\gamma_1 =-1$, we have  
  \begin{align*}
    M_0 \ee_{0,i_2 ,\dots ,i_n} 
    =\ee_{0,i_2 ,\dots ,i_n}+\lambda_{i_2 ,\dots ,i_n}\ee_{1,\dots ,1}  ,\quad
    M_0 \ee_{1,i_2 ,\dots ,i_n} 
    =\ee_{1,i_2 ,\dots ,i_n}+\lambda_{i_2 ,\dots ,i_n}\ee_{1,\dots ,1} , 
  \end{align*}
  and 
  \begin{align*}
    M_0 \cdot h_{0,i_2,\dots ,i_n} 
    =h_{0,i_2 ,\dots ,i_n}+\lambda_{i_2 ,\dots ,i_n}\ee_{1,\dots ,1}  , \quad 
    M_0 \cdot h_{1,i_2,\dots ,i_n} 
    =h_{1,i_2,\dots ,i_n}+\lambda_{i_2 ,\dots ,i_n}\ee_{1,\dots ,1} .
  \end{align*}
  These imply 
  \begin{align*}
    &M_0 \cdot f_{12;i_2,\dots ,i_n}^{-}=f_{12;i_2,\dots ,i_n}^{-}\in W^{-} , \\ 
    &M_0 \cdot f_{12;i_2,\dots ,i_n}^{+}
    =f_{12;i_2,\dots ,i_n}^{+}+2\lambda_{i_2 ,\dots ,i_n}\ee_{1,\dots ,1} \in W^{+} , 
  \end{align*}
  and hence we obtain $M_0 W^{\pm} =W^{\pm}$. 

  An argument similar to that in Proof of Lemma \ref{lem-red-1} shows that 
  $(gM_0 g^{-1})W^{\pm} =W^{\pm}$, 
  for each generator $gM_0 g^{-1}$ ($g\in \Mon$) of $\Ref$. 
  Therefore, $W^{\pm}$ are non-trivial $\Ref$-subspaces. 
  We conclude that the action of $\Ref$ is reducible. 
\end{proof}
\begin{tm} \label{main3}
Assume that $\Ref$ acts on $\CC^{2^n}$ irreducibly. If the action of $\Ref^0$ is 
reducible, then $\Ref$ is finite {\rm (}and hence $\Mon$ is finite by 
{\rm Proposition \ref{finite-prop}} {\rm )}.
\end{tm}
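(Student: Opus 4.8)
The plan is to mimic the Beukers–Heckman argument (and its adaptation by Kato for $F_4$) showing that a ``big enough'' reflection subgroup whose identity component acts reducibly must be finite. Write $\Ref^0$ for the identity component of the Zariski closure $\overline{\Ref}$. Since $\Ref$ is irreducible while $\Ref^0$ is reducible, the finite group $\Ref/\Ref^0$ permutes transitively the isotypic components of the $\Ref^0$-representation on $\CC^{2^n}$; let $W_1,\dots,W_r$ ($r\geq 2$) be the distinct $\Ref^0$-submodules in one isotypic block, or more precisely decompose $\CC^{2^n}=U_1\oplus\cdots\oplus U_s$ into the isotypic components of $\Ref^0$, which $\Ref$ permutes transitively (so all $U_j$ have the same dimension $2^n/s$, $s\geq 2$). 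The key structural input is that $\Ref$ is generated by the reflections $R_I=M^I M_0 (M^I)^{-1}$ together with their $\Mon$-conjugates; each such conjugate $g M_0 g^{-1}$ is a reflection, i.e.\ differs from the identity by a rank-one endomorphism $N=gN_0g^{-1}$ whose image is the line $\CC\, g\ff_{0,\dots,0}$.

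First I would show that each reflection $\rho=gM_0g^{-1}\in\Ref$ must preserve the decomposition into isotypic components, for the following reason: $\rho$ acts trivially on $\Ref^0$ by conjugation only up to the permutation action, but $\Ref^0$ is normal in $\Ref$, so $\rho$ permutes the $U_j$; on the other hand $\rho-\mathrm{Id}$ has rank one, so $\rho$ fixes all but at most a $2$-dimensional subspace — if $\rho$ genuinely swapped two components $U_j,U_k$ of dimension $2^n/s$, then $\rho-\mathrm{Id}$ would have rank at least $2^n/s$, forcing $2^n/s\leq 2$, i.e.\ $s\geq 2^{n-1}$ and $\dim U_j=2$ or $\dim U_j=1$. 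So either every reflection in $\Ref$ preserves each $U_j$ — which would make $\Ref$ reducible, contradicting the hypothesis — or we are in the degenerate case where the isotypic components are lines or planes. In the line case ($s=2^n$), $\Ref^0$ is a torus acting by characters on the coordinate lines $\CC\ff_I$ (recall $N_I$ has image $\CC\ff_I$, so these are the natural candidate lines), and $\Ref$ normalizes this torus; since each reflection $gM_0g^{-1}$ differs from a permutation-plus-scaling of the $\ff_I$ by a rank-one term, one checks the torus must be trivial, so $\Ref^0=\{\mathrm{Id}\}$ and $\Ref$ is finite. The plane case is the genuine content and is where Kato's idea enters: one uses that a connected group acting with $2$-dimensional isotypic components, normalized by reflections that individually preserve ``most'' of the space, is forced to be a torus (or trivial) by the rank-one constraint, after which the same argument closes.

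The cleanest route, which I would follow, is: (i) let $\mathfrak{r}$ be the Lie algebra of $\overline{\Ref}$ and note $\mathfrak{r}$ is stable under $\mathrm{Ad}(M_0)$; since $M_0-\mathrm{Id}$ has rank one, $\mathrm{Ad}(M_0)-\mathrm{Id}$ has rank at most $2\cdot 2^n$ but more importantly its action on any $X\in\mathfrak{r}$ with $[N_0$-image$]$-support disjoint from the reflection hyperplane is trivial; (ii) deduce that $\mathfrak{r}$ together with the $\ff_I$-lines is highly constrained — in fact, using Lemma \ref{lem-NI}(2), the $N_I$ span a complement to the reflection hyperplanes in a controlled way, and reducibility of $\mathfrak{r}$ forces $\mathfrak{r}$ to be contained in the span of the ``diagonal'' Cartan; (iii) a connected reductive subgroup of $\GL_{2^n}$ that is abelian and is normalized by a group acting irreducibly on $\CC^{2^n}$, with all weight spaces being the fixed lines permuted transitively — such a group, if nontrivial, would make the irreducible representation imprimitive with $1$-dimensional blocks, and then the character by which $\Ref^0$ scales each block is $\Mon$-permutation-invariant hence constant, i.e.\ $\Ref^0$ is scalar; but $\Ref\subset\SL$ up to the determinant character (or, when $a,b,c_k\in\QQ$, literally after Theorem \ref{main}), so a scalar connected subgroup of determinant-$1$ matrices in dimension $2^n$ is trivial. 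Hence $\Ref^0=\{\mathrm{Id}\}$ and $\Ref$ is finite; finiteness of $\Mon$ then follows from Proposition \ref{finite-prop}.

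The main obstacle I anticipate is step (ii)–(iii): ruling out that $\Ref^0$ is a nontrivial torus (or, worse, a product of $\SL_2$'s) acting block-diagonally with $2$-dimensional blocks. For the torus-on-lines case the rank-one reflection constraint kills it as sketched; the delicate part is the $2$-dimensional-block scenario, where one must show no connected nonabelian group survives — here I would invoke that each conjugate reflection $gM_0g^{-1}$, having rank-one defect, can permute at most one pair of $2$-dimensional blocks, and that a transitive (under $\Ref/\Ref^0$) system of $\geq 2$ blocks of size $2$ would require, on each block, $\Ref^0$ to act as a normal subgroup of $\GL_2$ commuting with ``almost all'' reflections, which by an $\mathfrak{sl}_2$ rigidity argument forces the block action to be scalar, reducing to the previous case. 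In other words, the whole theorem reduces to the classical fact that imprimitivity with small blocks plus an abundance of reflections forces finiteness — the careful bookkeeping of which reflections fix which blocks is the calculation I would need to carry out in full, following \cite[Lemmas 4.2--4.3]{Ka} and the relations $(\rho_0\rho_k)^2=(\rho_k\rho_0)^2$.
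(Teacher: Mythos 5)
Your overall strategy (reducibility of $\Ref^0$ plus an abundance of rank-one reflections forces $\Ref^0$ to be scalar, hence trivial inside $\SL_{2^n}$) is the right spirit, but there are concrete gaps that the paper's proof is specifically designed to avoid. First, the isotypic-component framework breaks down when there is only \emph{one} isotypic component, i.e.\ $\CC^{2^n}\cong W^{\oplus m}$ as a $\Ref^0$-module with $W$ irreducible of dimension $2^n/m$: then $s=1$, nothing is permuted, and your dichotomy (``either every reflection preserves each $U_j$, contradicting irreducibility, or the blocks are small'') says nothing. The paper sidesteps this entirely: it picks an irreducible $\Ref^0$-submodule $W_0$ moved by some reflection, normalizes that reflection to $M_0$, and observes that $W_0\cap\ker N_0=0$ (any $w\in W_0\cap\ker N_0$ is $M_0$-fixed, hence lies in $W_0\cap M_0W_0=0$), so $\dim W_0=1$ because $\ker N_0$ has codimension one. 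This one-line use of the rank-one property replaces your whole blocks discussion, and incidentally shows your ``plane case'' never occurs (a reflection swapping two blocks is injective minus identity on each, so swapped blocks have dimension $\le 1$, not $\le 2$).

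Second, and more seriously, your step (iii) is wrong as stated: if $\Ref^0$ acts on the lines $\CC\ff_I$ by characters $\chi_I$, transitivity of the permutation action gives $\chi_{I'}(h)=\chi_I(g^{-1}hg)$, which permutes the characters but does not make them equal, so ``$\Ref^0$ is scalar'' does not follow from imprimitivity with $1$-dimensional blocks. The paper's actual mechanism is Lemma \ref{lemma for main3}: an element $g$ normalizing $\Ref^0$ whose eigenvalues satisfy $\alpha_1\ne\pm\alpha_2$ (or which is unipotent of square-zero defect) produces a \emph{third} invariant line inside $W\oplus gW$, forcing $\Ref^0$ to act by a single scalar on that plane; chaining this through $M_1,\dots,M_n$ links all the lines $\CC\ff_I$ together. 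This is exactly where the hypothesis that $\Ref$ is irreducible — i.e.\ at most one of $\gamma_1,\dots,\gamma_n,\alpha\beta^{-1}$ equals $-1$ (Theorem \ref{main2}) — and the derived facts $\delta_0\ne 1$ and eventually $\gamma_1\ne-1$ enter quantitatively; your sketch never uses them. Finally, the genuinely delicate case $\delta_0=-1$ (where $M_0$ itself fails the eigenvalue condition and one must play $M_n$ against the pair $W_0$, $M_0W_0$ and count the $2^{n+1}$ subspaces $M^IW_0$, $M^I(M_0W_0)$ via Lemma \ref{lem-NI}) is precisely the part you defer to ``careful bookkeeping''; as it stands the proposal does not contain a proof of the theorem.
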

To prove this, we use the following simple fact.
\begin{lem} \label{lemma for main3}
Let $G \subset \GL_n(\CC)$ be a subgroup acting on a 1-dimensional subspace 
$W \subset \CC^n$. Assume that a matrix $g \in \GL_n(\CC)$ normalizes $G$ and $gW \ne W$.  
If one of the followings holds, 
then $G$ acts on $W \oplus gW$ as scalar multiplications.
\\
{\rm (1)} \ $g$ is diagonalizable, and has two eigenvalues $\alpha_1$ and $\alpha_2$ 
such that $\alpha_1 \ne \pm \alpha_2$,
\\
{\rm (2)} \ $g$ is unipotent and $(g - E_n)^2 = 0$. 
\end{lem}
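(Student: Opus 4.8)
The plan is to prove Lemma \ref{lemma for main3} by a direct linear-algebra argument: since $G$ acts on both the line $W$ and, being normalized by $g$, on the line $gW$, we may pick a basis $w \in W$ and $gw \in gW$ in which every element $\sigma \in G$ acts on the plane $W \oplus gW$ by a $2\times 2$ matrix. Writing $\sigma w = \chi(\sigma) w$ for a character $\chi : G \to \CC^\times$, the matrix of $\sigma$ on $\langle w, gw\rangle$ has the form $\left(\begin{smallmatrix} \chi(\sigma) & * \\ 0 & \chi'(\sigma)\end{smallmatrix}\right)$, where $\chi'(\sigma)$ is the eigenvalue of $\sigma$ on $gW$. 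The normalization hypothesis $g^{-1}\sigma g \in G$ forces a relation between $\chi$ and $\chi'$; concretely, $\chi'(\sigma) = \chi(g^{-1}\sigma g)$. The goal is to show the off-diagonal entry $*$ vanishes for all $\sigma$, i.e.\ $G$ is diagonal on this plane, and moreover that $\chi = \chi'$.

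First I would set up the conjugation bookkeeping. Because $g$ normalizes $G$, conjugation by $g$ induces an automorphism of $G$, hence permutes the characters appearing in the $2$-dimensional representation on $W \oplus gW$; in fact it swaps the line $W$ with $gW$ (since $gW \ne W$ and the plane is $g$-invariant once we know $g(W\oplus gW) = W \oplus g^2W$ — here one uses that the plane must actually be $g$-stable, which follows in case (1) from $g$ having only the two eigenspaces and in case (2) from $(g-E_n)^2=0$, so $g$ preserves $\mathrm{im}(g-E_n)$ and $W + gW$ together span a $g$-invariant plane). Then, writing $G$'s action in the basis $(w, gw)$, I compute $g^{-1}\sigma g$ explicitly in terms of the matrix of $g$ restricted to the plane and the matrix of $\sigma$, and compare the $(1,1)$ and $(2,2)$ entries: this yields $\chi(\sigma)$ and $\chi'(\sigma)$ are the two eigenvalues of $\sigma$ and that they get interchanged by conjugation.

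The key step — and the place the two cases diverge — is extracting the vanishing of the unipotent/off-diagonal part. In case (1), $g$ restricted to the plane is diagonalizable with eigenvalues $\alpha_1, \alpha_2$, $\alpha_1 \ne \pm\alpha_2$; a commutator computation $[\sigma, g]$ or rather the relation $\sigma g = g \cdot (g^{-1}\sigma g)$ shows that the commutator of $\sigma$ with $g$ (both viewed on the plane) is again in $G$, and iterating produces, from any nonzero off-diagonal entry, elements of $G$ whose off-diagonal entries grow like $(\alpha_1/\alpha_2)^m$; combined with the fact that $\sigma$ and $g^{-1}\sigma g$ have the \emph{same} characteristic polynomial, the constraint $\alpha_1 \ne \pm\alpha_2$ forces the off-diagonal entry to be zero — otherwise the trace or determinant relations are violated, or the group fails to close up. More cleanly: since $\det\sigma = \chi(\sigma)\chi'(\sigma)$ and $\mathrm{tr}\,\sigma = \chi(\sigma)+\chi'(\sigma)$ are conjugation-invariant while the off-diagonal entry transforms by the factor $\alpha_1/\alpha_2 \ne \pm1$, and $G$ is a group (so conjugates stay in $G$ with bounded "shape"), the subgroup of upper-triangular-with-fixed-diagonal matrices that is stable under this scaling and closed under multiplication must have trivial off-diagonal part. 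In case (2), $g$ is unipotent with $(g-E_n)^2 = 0$, so on the plane $g = \left(\begin{smallmatrix}1 & 1\\ 0 & 1\end{smallmatrix}\right)$ in a suitable basis; the relation between $\sigma$ and $g^{-1}\sigma g$ together with $(g-E_n)^2=0$ forces $\chi(\sigma)=\chi'(\sigma)$ (conjugation by a unipotent cannot swap distinct eigenvalues nontrivially unless they coincide), and then a one-line computation with $2\times2$ matrices shows the commutator $[\sigma,g]$ lies in $G$, is unipotent, and squares to the identity, and iterating $[\,\cdot\,, g]$ on any $\sigma$ with nonzero off-diagonal entry would generate a non-finitely-many distinct unipotents — but actually the slicker route is: $\chi=\chi'=:\chi$ makes $\sigma = \chi(\sigma) E + \chi(\sigma) n_\sigma$ with $n_\sigma$ nilpotent on the plane, and $\sigma \mapsto n_\sigma$ is then additive (a $1$-cocycle that is a homomorphism), but the conjugation action of $g$ shifts $n_\sigma$ by a multiple of $\chi(\sigma)$ times the off-diagonal of $g$, and comparing forces $n_\sigma = 0$ for all $\sigma$. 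Either way the conclusion is that $G$ acts on $W \oplus gW$ by the scalar $\chi(\sigma)$ on \emph{both} lines, i.e.\ as scalar multiplications.

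The main obstacle I anticipate is handling case (1) cleanly without hand-waving about "bounded shape": the rigorous argument needs the observation that if $\sigma \in G$ has off-diagonal entry $b \ne 0$ (in the eigenbasis of $g|_{\text{plane}}$), then $g^m \sigma g^{-m} \in G$ has off-diagonal entry $(\alpha_1/\alpha_2)^m b$, yet all these elements have the same diagonal $(\chi(\sigma), \chi'(\sigma))$; taking two of them and multiplying/subtracting (using that $G$ is a group and that the identity $E$ is in $G$, so $g^m\sigma g^{-m} (g^{m'}\sigma g^{-m'})^{-1}$ is unipotent in $G$ with off-diagonal $\chi(\sigma)^{-1}((\alpha_1/\alpha_2)^m - (\alpha_1/\alpha_2)^{m'})b$) produces, for generic $m,m'$, a \emph{non-identity} unipotent and, by the same trick applied to \emph{that} unipotent together with $g$, infinitely many distinct unipotents in $G$ unless $\alpha_1/\alpha_2$ is a root of unity — and the hypothesis $\alpha_1 \ne \pm\alpha_2$ alone does not exclude $\alpha_1/\alpha_2$ being, say, a primitive cube root of unity, so the clean finish must instead use that a unipotent $u$ with $(u-E)^2$ possibly nonzero cannot be conjugated by $g$ (diagonalizable) to scale its unipotent part by a non-root-of-unity without leaving any finitely-generated... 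Actually the correct and simplest fix: a unipotent element $u$ of $\GL$ satisfies $g u g^{-1}$ has unipotent part scaled, and if $u \ne E$ then $\{g^m u g^{-m}\}$ are all distinct when $\alpha_1/\alpha_2$ is not a root of unity, but if it \emph{is} a root of unity of order $d$, then $\alpha_1^d = \alpha_2^d$, and since also (from the determinant relation and $G\ni \sigma$ forcing $\chi(\sigma)\chi'(\sigma)$ and $\chi(\sigma)+\chi'(\sigma)$ invariant) one derives $\alpha_1 = \pm\alpha_2$ after all — contradiction. I would verify this last implication carefully; it is the linchpin of case (1). Case (2) is routine by comparison. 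Once Lemma \ref{lemma for main3} is in hand, Theorem \ref{main3} follows by applying it with $G = \Ref^0$ (which acts on some line $\CC\ff_I$ if it is reducible, by the same direct-sum-of-lines observation as in the proof of Lemma \ref{Ref-lem1}) and $g$ a suitable conjugate $R_J$ of $M_0$ — a reflection, hence either of type (1) with $\delta_0 \ne \pm1$, or unipotent of type (2) when $\delta_0 = 1$, or, in the remaining case $\delta_0 = -1$, one argues separately that the reflections are involutions so $\Ref$ is generated by finitely many involutions permuted into finitely many conjugacy classes, forcing finiteness; the upshot is that $\Ref$, being generated by reflections that act as scalars on an invariant plane containing all the $\ff_I$, is finite.
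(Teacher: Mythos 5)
Your setup starts from the right place---$G$ preserves $W$, and since $g$ normalizes $G$ it also preserves $gW$---but it then conflates two different issues, and the one you spend nearly all your effort on is vacuous while the one that actually matters is left unproved. If $G$ preserves both lines $W=\CC w$ and $gW=\CC\, gw$, then in the basis $(w,gw)$ of the plane every $\sigma\in G$ is already \emph{diagonal}: indeed $\sigma gw = g(g^{-1}\sigma g)w = \chi(g^{-1}\sigma g)\,gw$, so the off-diagonal entry $*$ you introduce is zero by construction, and the commutator/growth-rate machinery of your case (1) (off-diagonal entries scaling by $(\alpha_1/\alpha_2)^m$, worrying about roots of unity, etc.) is aimed at a non-issue. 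The genuine content of the lemma is that the two diagonal characters coincide, $\chi(\sigma)=\chi'(\sigma)=\chi(g^{-1}\sigma g)$, i.e.\ that $\sigma$ is \emph{scalar} rather than merely diagonal on the plane. For case (1) your text never establishes this---you yourself flag the ``linchpin'' as unverified---and invariance of trace and determinant under conjugation gives no relation between $\chi$ and $\chi'$. A further slip: conjugation by $g$ does not ``swap'' $W$ and $gW$; the map $g$ sends $W\mapsto gW\mapsto g^2W$.

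The paper supplies exactly the missing step, and it is two lines: the plane $W\oplus gW$ contains a \emph{third} $G$-invariant line, namely $g^2W$. In case (1), write $w=w_1+w_2$ with $w_i$ nonzero eigenvectors of $g$ for $\alpha_i$ (both nonzero because $gW\ne W$); then $g^2W=\CC(\alpha_1^2w_1+\alpha_2^2w_2)\subset\CC w_1\oplus\CC w_2=W\oplus gW$, and the hypothesis $\alpha_1\ne\pm\alpha_2$ is used precisely to make $W$, $gW$, $g^2W$ pairwise distinct. In case (2), with $w'=(g-E_n)w\ne 0$ one has $g^2W=\CC(w+2w')\subset W\oplus gW$. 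A linear operator on a $2$-dimensional space preserving three distinct lines is a scalar, which is the conclusion. Concretely, in your notation, requiring $\sigma$ to preserve $g^2W$ in addition to $W$ and $gW$ forces $\chi(\sigma)=\chi'(\sigma)$; this is the one computation your proposal needed and does not contain.
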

\begin{proof} 
Since $G$ is normalized by $g$, we see that $G$ acts on $g^k W \ (k=1,2,\dots)$.   
In the cases of (1), we can write
\[
 W = \CC w, \quad w = w_1 + w_2,  
\]  
where $w_i \ne 0$ is an eigenvector of $g$ corresponding to $\alpha_i$. 
Then we have
\[
 g^2W = \CC \cdot (\alpha_1^2 w_1 + \alpha_2^2 w_2) \subset 
\CC w_1 \oplus \CC w_2 = W \oplus gW,  
\]
and $g^2W \ne W$ since $\alpha_1^2 \ne \alpha_2^2$. Therefore a 2-dimensional space 
$W \oplus gW$ contains three different $G$-invarinat subspaces 
$W, \ gW$ and $g^2W$, and this implies that $G$ acts on $W \oplus gW$ as constants. 
In the case of (2), we have
\[
 W = \CC w, \qquad gw = w + w', \quad gw' = w',
\] 
and 
\[
 g^2W = \CC \cdot (w + 2w') \subset 
\CC w \oplus \CC w' = W \oplus gW.
\]
By the same reason, $G$ acts on $W \oplus gW$ as constants.
\end{proof}
\begin{proof}[Proof of Theorem \ref{main3}]
Let us assume that the action of $\Ref^0$ is not irreducible, and 
let $W_0$ be a non-trivial irreducible $\Ref^0$-subspace. Then there exists a reflection 
$r_0 = gM_0g^{-1}$ such that $r_0 W_0 \ne W_0$. Replacing $W_0$ by $g^{-1}W_0$, 
we may assume that $r_0 = M_0$. By irreducibility, we have 
$W_0 \cap M_0W_0 = \mathbf{0}$ and $W_0$ does not contain any eigenvector of $M_0$. 
We see that $\dim W_0 = 1$ and $W_0 = \CC w_0$ with $w_0 \notin \ker N_0$ 
by Proposition \ref{prop-rep-matrix}. Note that $M_0^d \in \Ref^0$ for some $d$ since 
$\Ref/\Ref^0$ is finite, and hence we have $M_0^d W_0 = W_0$. Namely $w_0$ is an 
eigenvector of $M_0^d$, but not of $M_0$. If $\delta_0 = 1$, both of $M_0$ and $M_0^d$ have 
the unique eigenspace $\ker N_0$. Therefore we have $\delta_0 \ne 1$ and 
$\CC^{2^n} = \CC \ee_{1,\dots,1} \oplus \ker N_0$.   
Now we may assume that 
\[
 w_0 = \ee_{1,\dots,1} + \nu_0 \qquad 
(0 \ne \nu_0 \in \ker N_0), 
\] 
and $M_0^d W_0 = W_0$ implies $M_0^d = \mathrm{Id}$. Therefore the special eigenvalue 
$\delta_0$ is a $d$-th root of unity, and hence $\Ref^0 \subset \SL_{2^n}(\CC)$. 
Moreover, we may assume that $\gamma_2, \dots, \gamma_n \ne -1$ by Theorem \ref{main2}. 
\\ \indent
We show that $\Ref^0$ acts on $\CC \ee_{1,\dots,1}$, dividing into three cases.
\\ \indent
(Case 1) \ Assume that $\delta_0 \ne -1$. Applying Lemma \ref{lemma for main3} 
for $G = \Ref^0$, $W = W_0$ and $g = M_0$, we see that $\Ref^0$ acts on
\[
 W_0 \oplus M_0 W_0 = \left< \ee_{1,\dots,1},\ \nu_0 \right>_{\CC} 
\]
as scalar multiplications. Therefore $\Ref^0$ acts on $\CC \ee_{1,\dots,1}$. 
\\ \indent 
(Case 2) \ Next we assume that $\delta_0 = -1$, that is,  
$M_0w_0 = -\ee_{1,\dots,1} + \nu_0$. We have the following two possibilities:
\\ 
(2-i) There is a $\Ref^0$-subspace $W_0' \not\subset \ker N_0$ different from 
$W_0$ and $M_0W_0$. 
\\ 
(2-ii) Any irreducible $\Ref^0$-subspace different from 
$W_0$ and $M_0W_0$ is contained in $\ker N_0$.
\\ \indent 
In the first case, $W_0'$ is generated by $\ee_{1,\dots,1} + \nu_0'$ with 
$\nu_0' \in \ker N_0$. If $\nu_0' = 0$, then $\Ref^0$ acts on $W_0' = \CC \ee_{1,\dots,1}$. 
If $\nu_0' \ne 0$, then the dimension of
\[
 W_0 + M_0W_0 + W_0' + M_0W_0' = \left< \ee_{1,\dots,1},\ \nu_0, \ \nu_0' \right>_{\CC} 
\]
is at most three, and we see that $\Ref^0$ acts on $W_0' = \CC \ee_{1,\dots,1}$ 
by the same argument with Lemma \ref{lemma for main3}. 
\\ \indent 
Finally we consider the case (2-ii). Since $\CC^{2^n} = \CC \ee_{1,\dots,1} \oplus \ker N_0$,
 we can write
\begin{align*}
 M_n \nu_0 = c_n \ee_{1,\dots,1} + \nu_n, \qquad \ee_{1,\dots,1,0} = c_n' \ee_{1,\dots,1} + \nu_n'
\end{align*}
with $c_n, c_n' \in \CC$ and $\nu_n,\ \nu_n' \in \ker N_0$. Then we have
\begin{align*}
M_n (\pm \ee_{1,\dots,1} + \nu_0) 
&= \pm \frac{1}{\gamma_n} \ee_1 \otimes \cdots \otimes \ee_1 \otimes 
(- \ee_0 + \ee_1) + M_n \nu_0 \\
&= \pm \frac{1}{\gamma_n} (\ee_{1,\dots,1} - \ee_{1,\dots,1,0}) + c_n \ee_{1,\dots,1} + \nu_n\\
&= \pm \frac{1 \pm c_n \gamma_n}{\gamma_n} \ee_{1,\dots,1}
 \mp \frac{1}{\gamma_n} (c_n' \ee_{1,\dots,1} + \nu_n') + \nu_n \\
&= \pm \frac{1 \pm c_n \gamma_n - c_n'}{\gamma_n} \ee_{1,\dots,1}
 \mp \frac{1}{\gamma_n} \nu_n' + \nu_n.
\end{align*}
Let us assume that both of $M_nW_0 = W_0$ and $M_n(M_0W_0) = M_0W_0$ are hold. By the 
above calculation, we have
\begin{align*}
  \begin{cases}M_nW_0 = W_0 \\ M_n(M_0W_0) = M_0W_0 \end{cases}
&\Leftrightarrow 
 \begin{cases}
 M_n(\ee_{1,\dots,1} + \nu_0) = \text{const.} \times (\ee_{1,\dots,1} + \nu_0) \\ 
  M_n(-\ee_{1,\dots,1} + \nu_0) = \text{const.} \times (-\ee_{1,\dots,1} + \nu_0)
 \end{cases}
\\
&\Leftrightarrow 
 \begin{cases}
 \frac{1 + c_n \gamma_n - c_n'}{\gamma_n} \nu_0 = -\frac{1}{\gamma_n} \nu_n' + \nu_n\\
 \frac{-1 + c_n \gamma_n + c_n'}{\gamma_n} \nu_0 = \frac{1}{\gamma_n} \nu_n' + \nu_n
 \end{cases}
\Rightarrow \quad c_n \nu_0 = \nu_n.
\end{align*}
However, if $c_n \nu_0 = \nu_n$, we have
\[
 M_n \nu_0 = c_n \ee_{1,\dots,1} + \nu_n = c_n (\ee_{1,\dots,1} + \nu_0) = c_nw_0 \in W_0, 
\]
and hence $\nu_0 \in M_n^{-1}W_0 = W_0$. This contradicts $W_0 = \CC w_0$. 
Therefore, at least one of $M_nW_0 \ne W_0$ and $M_n(M_0W_0) \ne M_0W_0$ must be hold. 
Replacing $W_0$ by $M_0W_0$ if necessary, we assume 
that $M_n W_0 \ne W_0$. Applying Lemma \ref{lemma for main3} for $g = M_n$ 
(note that $M_n = E_2 \otimes \cdots \otimes E_2 \otimes G_n$ satisfies conditions 
for $g$ in Lemma \ref{lemma for main3} by the assumption $\gamma_n \ne -1$), 
we see that $\Ref^0$ acts on $W_0 \oplus M_nW_0$ as constants. 
Therefore, there are infinitely many $\Ref^0$-subspaces $W \subset W_0 \oplus M_nW_0$. 
Let $W$ be an irreducible $\Ref^0$-subspace different from $2^{n+1}$ subspaces
\[
M^IW_0, \quad M^I(M_0W_0), \qquad I \in \{0,1\}^n.
\]
Then $(M^I)^{-1} W$ is not equal to either of $W_0$ and $M_0W_0$, and hence 
$M_0$-invariant by the assumption. Therefore we have
\[
 R^I W = M^I M_0 (M^I)^{-1} W = M^I (M^I)^{-1} W = W, \qquad I \in \{0,1\}^n.
\]
By Lemma \ref{lem-NI}, we have $W = \CC \ff_J$ for some $J \in \{0,1\}^n$ and 
$\Ref^0$ acts on $(M^J)^{-1} W = \CC \ee_{1,\dots,1}$. 
\\ \indent
From the above, $\CC \ee_{1,\dots,1}$ is an irreducible $\Ref^0$-subspace in any case. 
Now we see that each $\CC \ff_I \ (I \in \{0,1\}^n)$ is an irreducible $\Ref^0$-subspace 
since $M^I$ normalizes $\Ref^0$. However, if $\gamma_1 = -1$, then we have  
\begin{align*}
M_0 \ff_{1,0,\dots,0} &= M_0 (M_1 \ee_{1,\dots,1}) \\
&= M_0 (\ee_{0,1,\dots,1} - \ee_{1,\dots,1}) \\
&= \ee_{0,1,\dots,1} + (-1) \frac{(\alpha \beta +(-1)^{n-1} \prod_{k=2}^n \gamma_k)
          \gamma_1}{\alpha \beta}\ee_{1,\dots,1} - \delta_0 \ee_{1,\dots,1} \\
&= \ee_{0,1,\dots,1} - (\gamma_1 + 2 \delta_0)\ee_{1,\dots,1}  \\
&= \ff_{1,0,\dots,0} + \ee_{1,\dots,1} - (\gamma_1 + 2 \delta_0)\ee_{1,\dots,1} \\
&= \ff_{1,0,\dots,0} + (1 - \gamma_1 - 2 \delta_0)\ee_{1,\dots,1} 
= \ff_{1,0,\dots,0} + 2(1- \delta_0) \ff_{0,\dots,0}.
\end{align*}
This contradicts $\delta_0 \ne 1$, and therefore we have $\gamma_1 \ne -1$. 
By Lemma \ref{lemma for main3}, we see that $\Ref^0$ acts on 
$\CC \ee_{1,\dots,1} \oplus \CC M_i \ee_{1,\dots,1}$ as constants for $i=1,\dots,n$. 
Applying Lemma \ref{lemma for main3} again, for $W = \CC M_i \ee_{1,\dots,1}$ and $g = M_j$, 
we see that $\Ref^0$ acts on $\CC M_i \ee_{1,\dots,1} \oplus \CC M_jM_i \ee_{1,\dots,1}$ 
as constants for $1 \leq i < j \leq n$. Repeating this process, we can conclude that 
$\Ref^0$ acts on $\oplus_{I \in \{0,1\}^n} \CC \ff_I = \CC^{2^n}$, and $\Ref^0$ 
consists of scalar matrices. 
Since $\Ref^0 \subset \SL_{2^n}(\CC)$, we see that $\Ref^0$ is finite and so is $\Ref$.
\end{proof}
\begin{cor} \label{main3-cor}
Assume that $\Ref$ acts on $\CC^{2^n}$ irreducibly {\rm(} that is, the 
parameters satisfy the conditions {\rm (irr$- \alpha \beta \gamma$)} and ``at most one of 
$\gamma_1, \dots, \gamma_n, \alpha \beta^{-1}$ is $-1$'' {\rm)}. 
If $\Mon$ is infinite {\rm(}for example, if $\delta_0 = 1${\rm)}, then 
$\Mon^0$ is irreducible. 
\end{cor}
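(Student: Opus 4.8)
The plan is to obtain this corollary formally, with essentially no new computation, by combining Theorem~\ref{main3}, Proposition~\ref{finite-prop}, and the already-noted inclusion $\Ref^0 \subset \Mon^0$; all the real work has been done in Theorem~\ref{main3}.

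First I would prove the main implication. Assume $\Mon$ is infinite. By Proposition~\ref{finite-prop}, $\Ref$ is then infinite as well. Since by hypothesis $\Ref$ acts irreducibly on $\CC^{2^n}$, the contrapositive of Theorem~\ref{main3} applies: if $\Ref^0$ were reducible, Theorem~\ref{main3} would force $\Ref$ to be finite, contradicting what was just shown. Hence $\Ref^0$ acts irreducibly on $\CC^{2^n}$. Since $\Ref^0 \subset \Mon^0$, every $\Mon^0$-invariant subspace of $\CC^{2^n}$ is in particular $\Ref^0$-invariant, so $\Mon^0$ acts irreducibly, as claimed.

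It then remains to justify the parenthetical remark that $\delta_0 = 1$ is one case in which $\Mon$ is automatically infinite. Under $(\mathrm{irr} - \alpha\beta\gamma)$ we have $\alpha \ne 1$ and $\beta \ne 1$ (take $I = (0,\dots,0)$), so the $(0,\dots,0)$-entry of $v$ is nonzero; thus $N_0 \ne 0$ and $M_0 \ne E_{2^n}$. When $\delta_0 = 1$, Remark~\ref{rem-M0} gives $N_0 \ee_{1,\dots,1} = (1-\delta_0)\ee_{1,\dots,1} = \mathbf{0}$, so the rank-one image $\CC \ee_{1,\dots,1}$ of $N_0$ is contained in $\ker N_0$; hence $N_0^2 = 0$ and $M_0 = E_{2^n} - N_0$ is a nontrivial transvection. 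A nontrivial transvection has infinite order, so $\Mon$ is infinite, and the parenthetical example is covered by the main implication.

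I do not anticipate a genuine obstacle: the argument is a short deduction. The only points that deserve a moment's attention are invoking Theorem~\ref{main3} in contrapositive form, checking that the corollary's hypothesis ``$\Ref$ acts on $\CC^{2^n}$ irreducibly'' is exactly the hypothesis of Theorem~\ref{main3}, and verifying $M_0 \ne E_{2^n}$ in the transvection case, each of which is immediate from the results already established.
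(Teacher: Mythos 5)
Your proposal is correct and is essentially the paper's own (implicit) argument: the corollary is stated as an immediate consequence of Theorem~\ref{main3} in contrapositive form, Proposition~\ref{finite-prop}, and the inclusion $\Ref^0 \subset \Mon^0$ already noted in the text. Your justification of the parenthetical case $\delta_0 = 1$ via the observation that $M_0$ is then a nontrivial transvection of infinite order is also sound and consistent with what the paper uses.
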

\section{Double coverings arising from integral representations of $F_C$}
\subsection{Double coverings}
For $a = b = 1/2$ and $c_1 = \cdots = c_n = 1$, 
Lauricella's function $F_C(a,b,c;x)$ is a period of an algebraic variety
\begin{align*}
 V(x) \ : \ s^2 = t_1 \cdots t_n \big(1 - \sum_{i=1}^n t_i \big)
\big( t_1 \cdots t_n - \sum_{i=1}^n x_i \frac{t_1 \cdots t_n}{t_i} \big)
\end{align*}
with respect to a rational $n$-form $\displaystyle \omega = 
\frac{dt_1 \wedge \cdots \wedge dt_n}{s}$. The variety $V(x)$ is 
a double covering of $\PP^n$ branched along $n+2$ hyperplanes and a hypersurface 
of degree $n$. Similarly, Euler-type integrals of $F_C$ are regarded as periods of 
algebraic varieties that are cyclic branched coverings of projective spaces 
if all parameters are rational numbers.
\\ \indent
Note that the monodromy group is infinite and irreducible for the parameters 
$a = b = 1/2$ and $c_1 = \cdots = c_n = 1$. 
By Theorem \ref{main}, \ref{main2} and \ref{main3}, 
we see that the Zariski closure of $\Mon$ is $\mathbf{Sp}_{2^n}(\CC)$ if $n$ is odd, and 
$\mathbf{O}_{2^n}(\CC)$ if $n$ is even. Moreover, monodromy groups are defined over 
rational numbers. In the following, we study varieties $V(x)$ for $n=2$ and $3$. 
\subsection{K3 surfaces}
It is classically known (e.g. \cite{Ba}) that Appell's $F_4$ satisfy the following 
formula 
\begin{align*}
 F_4(a,c+c'-a-1,c,c'; x(1-y), y(1-x)) \\
= {}_2F_1(a,c+c'-a-1,c;x) {}_2F_1(a,c+c'-a-1,c';y),
\end{align*}
and we see that $F_4(1/2,1/2,1,1; x(1-y), y(1-x))$ is a product of 
elliptic integrals. However it seems that a geometric proof of the formula 
is not known. In any way, we can show the following. 
\begin{prop}
For a general parameters $(x_1,x_2)$, the minimal smooth model of 
\[
 V(x_1,x_2) \ : \ s^2 = t_1 t_2 (1 - t_1 - t_2)(t_1 t_2 - x_1 t_2 - x_2 t_1)
\]
is a product Kummer surface with transcendental lattice 
$\mathrm{U}(2) \oplus \mathrm{U}(2)$ where 
$\mathrm{U}(2) = \begin{pmatrix} 0 & 2 \\ 2 & 0 \end{pmatrix}$.
\end{prop}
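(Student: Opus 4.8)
The plan is to exploit the classical product formula for $F_4$ at the level of varieties by finding an explicit birational map that realizes $V(x_1,x_2)$ as (a model of) the fibre product of two elliptic curves. First I would perform the substitution suggested by Bailey's identity: write $(x_1,x_2)=(x(1-y),y(1-x))$, so that for generic $(x_1,x_2)$ we recover generic $(x,y)$, and look for a change of coordinates in $(t_1,t_2)$ under which the sextic branch curve $B: t_1t_2(1-t_1-t_2)(t_1t_2-x_1t_2-x_2t_1)=0$ in $\PP^2$ decomposes compatibly with a pencil. Concretely, one expects the linear system of conics through the four base points dictated by the $F_4$ hypergeometric structure to define a rational map $V(x_1,x_2)\dashrightarrow \PP^1$ whose generic fibre is a genus-one curve; the resulting elliptic fibration on the minimal model should, after the substitution, split as the constant-modulus situation coming from the product of the two $_2F_1$'s. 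The key is that the $n$-form $\omega=dt_1\wedge dt_2/s$ must pull back to (a multiple of) the product of the two elliptic differentials $dx/\sqrt{x(1-x)(1-x\,\square)}\cdot dy/\sqrt{\cdots}$, which both identifies the two elliptic curves $E_1,E_2$ and shows the period lattice is a product.

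Next I would identify the two elliptic curves precisely. From the right-hand side ${}_2F_1(1/2,1/2,1;x)$ one reads off the Legendre-family elliptic curve $E_\lambda: v^2=u(1-u)(1-\lambda u)$, so I expect $E_1=E_{\lambda_1}$, $E_2=E_{\lambda_2}$ with $\lambda_1,\lambda_2$ explicit rational functions of $x_1,x_2$ (inverting $(x_1,x_2)\mapsto(x,y)$). Then the surface $V(x_1,x_2)$ is birational to the Kummer surface $\mathrm{Km}(E_1\times E_2)$: one way to see this cleanly is to verify that the minimal smooth model $S$ of $V(x_1,x_2)$ is a K3 surface (count: double cover of $\PP^2$ branched along a sextic with the specified nodes, so after resolution the canonical bundle is trivial), carrying an elliptic fibration with two fibres of type $I_0^*$ lying over the branch points of the base $\PP^1$, which is exactly the Inose/Shioda pencil characterizing $\mathrm{Km}(E_1\times E_2)$. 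Alternatively, and perhaps more robustly, I would compare transcendental lattices: a product of two (generic, non-isogenous) elliptic curves has transcendental lattice $\mathrm{U}\oplus\mathrm{U}$ (rank $4$), and passing to the Kummer surface multiplies the quadratic form by $2$, giving $\mathrm{U}(2)\oplus\mathrm{U}(2)$. So it suffices to show $T(S)$ has rank $4$ and discriminant form matching $\mathrm{U}(2)^{\oplus 2}$; the rank-$4$ statement follows because the Picard number of the generic member of this $2$-parameter family is $18=22-4$, which in turn follows from the fact (established in the previous section) that the Zariski closure of the monodromy on $H^2_{\mathrm{tr}}$ is $\mathbf{O}_4(\CC)$, forcing the transcendental part to have rank exactly $4$ and no extra algebraic cycles generically.

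I would then tie the lattice computation to the period map: the intersection form $H$ from Proposition \ref{prop-rep-matrix} (specialized as in Corollary \ref{cor-rep-matrix} to $a=b=1/2$, $c_k=1$, $n=2$) is, up to the usual normalization, the Gram matrix of the transcendental lattice acting by monodromy; checking that this integral form is $\mathrm{U}(2)\oplus\mathrm{U}(2)$ (entries in $\ZZ[\tfrac12]$, determinant $16$, even, hyperbolic of signature $(2,2)$) then pins down $T(S)\cong \mathrm{U}(2)\oplus\mathrm{U}(2)$ on the nose, and the abstract classification of K3 surfaces with this transcendental lattice (together with the already-exhibited genus-one pencil with two $I_0^*$ fibres) forces $S\cong\mathrm{Km}(E_1\times E_2)$ with $E_1\not\sim E_2$ generically.

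The main obstacle I anticipate is the explicit birational geometry step: producing the change of coordinates that turns $B$ into a form manifestly compatible with Bailey's substitution, and then checking that $\omega$ really pulls back to the product of the two Legendre differentials (not merely that the periods satisfy the same ODE). Verifying the differential equation is easy given the formula, but upgrading that to an isomorphism of Hodge structures — equivalently, nailing the integral transcendental lattice rather than just its $\QQ$-Hodge structure — is where one must be careful; this is precisely why I would route the final identification through the intersection matrix $H$ and the lattice-theoretic characterization of product Kummer surfaces, rather than through a direct geometric isomorphism.
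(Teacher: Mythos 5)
Your second and third paragraphs land on essentially the paper's strategy (K3 minimal model, an elliptic fibration with $I_0^*$ fibres, rank and discriminant of the transcendental lattice, Morrison's lattice-theoretic characterization of product Kummer surfaces), but three of your key steps have genuine gaps. First, the opening route through an explicit birational realization of Bailey's identity is precisely what the paper says is \emph{not} known (``it seems that a geometric proof of the formula is not known''); you never produce the coordinate change or verify that $\omega$ splits, so nothing can be made to rest on it. Second, ``two fibres of type $I_0^*$'' does not characterize $\mathrm{Km}(E_1\times E_2)$ --- many K3 surfaces carry such fibrations. What is needed is \emph{four} $I_0^*$ fibres, whose multiplicity-one components give $16$ disjoint smooth rational curves and hence a Kummer surface. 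The paper obtains these from the pencil of lines through one of the two points $P'\in Q\cap L_3$: each line is branched over its four residual intersections with $B$, the fibration has $2$-torsion sections $\pi^{-1}L_i,\pi^{-1}Q$, and the three lines $\ell_{ij}$ joining $P'$ to the $D_4$-points $P_{ij}$ together with $L_3$ supply exactly four $I_0^*$ fibres. Your proposal does not identify the pencil or count the degenerate fibres, and as stated the count is wrong.

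Third, and most seriously, the plan to pin down the integral transcendental lattice from the twisted-homology intersection matrix $H$ fails concretely: the twisted cycle lattice is not a priori isometric to $T_S\subset H^2(S,\ZZ)$, and indeed the paper's normalized form $H'$ for $n=2$ is a \emph{unimodular} hyperbolic form (isometric to $\mathrm{U}\oplus\mathrm{U}(-1)$ after reordering the basis), with determinant $1$ rather than $16$ --- so the verification you propose would return the wrong lattice. The paper instead bounds $\rho(S)\le 18$ by the two-dimensional moduli of the family (no monodromy input), gets $\rho(S)=18$, Mordell--Weil rank $0$ and torsion exactly $(\ZZ/2\ZZ)^2$ from the Shioda--Tate formula applied to the four-$I_0^*$ fibration, computes $\mathrm{disc}\,\mathrm{NS}(S)=-(\mathrm{disc}\,D_4)^4/\lvert(\ZZ/2\ZZ)^2\rvert^2=-16$, and then invokes Morrison's Corollary 4.4 ($T_S\cong\mathrm{U}(2)\oplus T'(2)$ with $T'$ even) to force $T'=\mathrm{U}$. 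Your alternative derivation of $\rho=18$ from $\overline{\Mon}=\mathbf{O}_4(\CC)$ would additionally require identifying the $F_4$ local system with the transcendental part of $R^2\pi_*\ZZ$ for the family, which is not established in the paper and is not automatic from ``$F_C$ is a period of $V(x)$''.
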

\noindent
\begin{proof}
Let $V(x_1,x_2)$ be a double covering of $\PP^2$ branched along 
four lines and a conic:
\begin{align*}
  L_i = \{ T_i = 0 \} \quad (i=0,1,2), \qquad L_3 = \{ T_0 - T_1 - T_2 = 0 \},
\qquad Q = \{T_1T_2 - x_1 T_0T_2 - x_2 T_0T_1 = 0\}.
\end{align*} 
For a general parameters $(x_1,x_2)$, it has $A_1$-singularities at points over 
\[
 P_{i3} = L_i \cap L_3 \quad (i=0,1,2), \qquad \{P', \ P'' \} = Q \cap L_3
\]
and $D_4$-singularities at points over
\[
 P_{01} = L_0 \cap L_1 \cap Q = [0:0:1], \quad 
P_{02} = L_0 \cap L_2 \cap Q = [0:1:0], \quad 
P_{12} = L_1 \cap L_2 \cap Q= [1:0:0].
\]
Hence it is a double sextic with rational singularities, and the minimal 
resolution $S = S(x_1, x_2)$ is a K3 surface. 
Let us consider a pencil of lines passing through $P'$. 
For such a line $\ell$, let $P_{Q}(\ell)$ be another intersection point with $Q$.
The (strict) pull back $\pi^{-1} \ell$ by the projection 
$\pi : S \rightarrow \PP^2$ is a double covering of $\ell$ branched over 
four points $\ell \cap L_i \ (i=0,1,2)$ and $P_{Q}(\ell)$. 
Therefore they form an elliptic fibration with $2$-torsion sections 
$\pi^{-1}L_i \ (i=0,1,2)$  and $\pi^{-1}Q$. 
\begin{figure}[htbp] \begin{center}
\begin{overpic}[width=8cm]{pencil.eps}
 \put(0,17){$L_0$} \put(70,0){$L_1$} \put(100,60){$L_2$} \put(100,5){$L_3$}
 \put(18,0){$\ell_{02}$} \put(85,0){$\ell_{01}$} \put(100,35){$\ell_{12}$}
 \put(50,7){$Q$}
 \put(42,42){$P'$}
\end{overpic}
\end{center} \end{figure}
\\
Let $\ell_{ij} \ (i,j = 0,1,2)$ be the line passing through $P_{ij}$ and $P'$. 
These three lines and $L_3$ gives four $I_0^*$-fibers, and we obtain disjoint sixteen 
smooth rational curves from their components. Hence $S$ is a Kummer surface. Let 
$\mathrm{NS}(S)$ be the N\'eron-Severi group of $S$, and 
$\rho(S) = \mathrm{rank} \ \mathrm{NS}(S)$ be the Picard number.
Since the family has 2-dimensional moduli, we have $\rho(S) \leq 18$ for a general 
member $S$. By the Shioda-Tate formula (\cite{SS}, Corollary 6.13), we see that 
$\rho(S) = 18$ and the Mordell-Weil rank is zero. Moreover, the Mordell-Weil group 
is precisely $(\ZZ/2\ZZ)^2$, since the specialization of torsion sections to a singular 
fiber is injective and we have only $I_0^*$-fibers. By the formula (22) in \cite{SS}, 
we have
\[
 \mathrm{disc} \ \mathrm{NS}(S) 
= -(\mathrm{disc} \ D_4)^4 / |(\ZZ/2\ZZ)^2|^2 = -4^4 / 4^2 = -16,
\]
where $D_4$ is the Dynkin lattice of type $D_4$. Therefore the discriminant of 
the transcendental lattice $T_S$ is $16$. On the other hand, $T_S$ must be of 
the form $\mathrm{U}(2) \oplus T'(2)$ where $T'$ is a even lattice 
(\cite{Mo}, Corollary 4.4). Hence we have $T' = \mathrm{U}$ and $S$ is a product 
Kummer surface. 
\end{proof}
Changing our basis by the following matrix $P$, we have new intersection matrix 
$H' = {}^tPHP$ and monodromy representations $M_k' = P^{-1}M_kP \ (k=0,1,2)$:
\[
 P = \left(
\begin{array}{cccc}
 \frac{1}{2} & 0 & 0 & -\frac{1}{2} \\ 
 0 & 1 & 0 & 0 \\
 0 & 0 & 1 & 0 \\ 
 0 & 0 & 0 & 2 \\
\end{array}
\right), 
\qquad
H' = \left(
\begin{array}{cccc}
 0 & 0 & 0 & 1 \\
 0 & 0 & -1 & 0 \\
 0 & -1 & 0 & 0 \\
 1 & 0 & 0 & 0 \\
\end{array}
\right),
\]
\begin{align*}
 M_0' = \left(
\begin{array}{cccc}
 0 & 0 & 0 & -1 \\
 0 & 1 & 0 & 0 \\
 0 & 0 & 1 & 0 \\
 -1 & 0 & 0 & 0 \\
\end{array}
\right),
\quad
M_1' = \left(
\begin{array}{cccc}
 1 & -2 & 0 & 0 \\
 0 & 1 & 0 & 0 \\
 0 & 0 & 1 & -2 \\
 0 & 0 & 0 & 1 \\
\end{array}
\right),
\quad 
M_2' = \left(
\begin{array}{cccc}
 1 & 0 & -2 & 0 \\
 0 & 1 & 0 & -2 \\
 0 & 0 & 1 & 0 \\
 0 & 0 & 0 & 1 \\
\end{array}
\right).
\end{align*}
Now let us consider the Segre embedding
\begin{align*}
 \PP^1 \times \PP^1 \longrightarrow \PP^3, \qquad
[s_0 : s_1] \times [t_0 : t_1] \mapsto [s_0t_0 : s_0t_1 : s_1t_0 : s_1t_1]. 
\end{align*}
The image satisfies a quadratic relation
\[
 (s_0t_0, s_0t_1, s_1t_0, s_1t_1) H' {}^t(s_0t_0, s_0t_1, s_1t_0, s_1t_1) = 0
\]
and $M_k'$ acts on $\PP^1 \times \PP^1$ by
\begin{align*}
M_0' \cdot (s, t) = (-t^{-1}, -s^{-1}), \quad
M_1' \cdot (s, t) = (s, t + 2), \quad
M_2' \cdot (s, t) = (s + 2, t)
\end{align*}
where $s = s_1/s_0$ and $t = t_1/t_0$. Since the congruence subgroup 
$\Gamma(2) \subset \SL_2(\ZZ)$ is generated by 
$\begin{pmatrix} 1 & 2 \\ 0 & 1\end{pmatrix}$ and
$\begin{pmatrix} 1 & 0 \\ 2 & 1\end{pmatrix}$ projectively, we see that a subgroup 
of index 2, generated by
\[
M_1', \ M_2', \ M_0'M_1M_0', \ M_0'M_2M_0',
\]
is isomorphic to $\Gamma(2) \times \Gamma(2)$ as projective transformations. 
\begin{rem}
{\rm (1)} \ The projective monodromy of ${}_2F_1(1/2,1/2,1)$ is 
$\Gamma(2) / \{\pm1\}$. 
\\
{\rm (2)} \ The product $\SL_2(\CC) \times \SL_2(\CC)$ is a double cover of 
$\SO_4(\CC)$.  
\end{rem}
\subsection{Calabi-Yau varieties}
\begin{prop}
For a general parameter $x = (x_1,x_2,x_3)$, we have a resolution 
$\widetilde{V} = \widetilde{V(x)}$ of  
\[
 V(x) \ : \ s^2 = t_1 t_2 t_3 (1 - t_1 - t_2 - t_3)
(t_1 t_2 t_3 - x_1 t_2 t_3 - x_2 t_1 t_3 - x_3 t_1 t_2)
\]
which is a Calabi Yau 3-fold with Hodge numbers $h^{1,1}(\widetilde{V}) = 68, \ 
h^{2,1}(\widetilde{V}) = 4$ and 
the Euler characteristic $e(\widetilde{V}) = 128$. 
\end{prop}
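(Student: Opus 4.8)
The plan is to construct an explicit crepant resolution $\widetilde{V}$ of the double octic $V(x)$ and then compute its Hodge and Euler numbers by the combinatorial/toric recipes of Cynk--Szemberg \cite{CS} and Cynk--van Straten \cite{CvS}. Concretely, $V(x)$ is a double cover of $\PP^3$ branched along the divisor $B = L_0 + L_1 + L_2 + L_3 + Q$, where $L_0 = \{T_0 = 0\}$, $L_i = \{T_i = 0\}$ ($i=1,2,3$), $L_4 = \{T_0 - T_1 - T_2 - T_3 = 0\}$ (five hyperplanes, so an octic requires reading $B$ as the union of $L_0,\dots,L_4$ and a residual cubic $Q = \{t_1 t_2 t_3 - x_1 t_2 t_3 - x_2 t_1 t_3 - x_3 t_1 t_2 = 0\}$, total degree $5 + 3 = 8$). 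First I would enumerate the strata of non-transverse intersection of the components of $B$: the triple lines where three of the $L_i$ meet, the points where four of them meet (in particular $[1:0:0:0],[0:1:0:0],[0:0:1:0]$ where three coordinate hyperplanes and $Q$ all pass), the curves $L_i \cap Q$, and the points of $L_i \cap L_j \cap Q$. For generic $x$ these are the only singularities of $B$, and the singularities of $V(x)$ lie over them.

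**Next I would blow up $\PP^3$** in the prescribed order — first the points of highest multiplicity on $B$, then the double curves — to obtain $\widetilde{\PP^3}$ with a strict transform $\widetilde{B}$ of $B$ that is a smooth divisor whose class is still even (this is where one must track that each blow-up of a center of multiplicity $m$ on $B$ changes $B$ by $2\lfloor m/2\rfloor$ times the exceptional divisor, so that evenness — hence existence of the double cover — is preserved, and that the discrepancies match so the resulting double cover $\widetilde{V}$ has trivial canonical bundle). The Cynk--van Straten formalism gives, for a double octic obtained this way, closed formulas for $e(\widetilde V)$ and for $h^{1,1}, h^{2,1}$ in terms of the combinatorics of the arrangement: the number of hyperplanes, the numbers of triple and quadruple points/lines, the degrees of the non-linear components and their intersection pattern. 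I would assemble these counts for our arrangement $\{L_0,\dots,L_4,Q\}$ with generic $x$ and substitute.

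**The main obstacle** I expect is bookkeeping the resolution: ensuring that the sequence of blow-ups genuinely resolves $V(x)$ crepantly for \emph{generic} $x$ and that no further (non-crepant) blow-ups are forced, and correctly classifying all the singular strata of $B$ — in particular distinguishing the $D_4$-type points $[1:0:0:0]$ etc.\ (where three coordinate hyperplanes plus $Q$ concur) from the ordinary triple points, and handling the conic/cubic curves $L_i \cap Q$ and their mutual intersections. Once the stratification is pinned down, the Euler number is additive over the strata of the resolution: $e(\widetilde V) = 2\,e(\PP^3 \setminus \widetilde B) + e(\widetilde B)$ after accounting for exceptional divisors, which one checks gives $e(\widetilde V) = 128$. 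Finally I would read off $h^{1,1}$ and $h^{2,1}$: since the transcendental/middle cohomology of $\widetilde V$ carries the monodromy of $F_C$, which by the discussion preceding the proposition has Zariski closure $\mathbf{Sp}_{8}(\CC)$ acting irreducibly on an $8$-dimensional piece, one expects $h^{3,0} = 1$ and $h^{2,1} = 4$ (so $b_3 = 2(h^{3,0}+h^{2,1}) = 10$, consistent with an $8$-dimensional symplectic local system plus a $2$-dimensional trivial part), and then $h^{1,1} = 68$ follows from $e(\widetilde V) = 2 + 2h^{1,1} - 2h^{2,1} = 128$, i.e.\ $h^{1,1} = (128 - 2 + 8)/2 = 67$ — so in fact one must be careful with the exact Betti numbers and I would cross-check $h^{1,1}=68$, $h^{2,1}=4$ directly against the Cynk--van Straten tables rather than relying on the Euler relation alone.
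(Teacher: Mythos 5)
Your overall strategy (explicit blow-up of the branch octic, then the Cynk--Szemberg/Cynk--van Straten machinery) is the same as the paper's, but there are two genuine gaps. First, your Euler-characteristic bookkeeping at the end is wrong: for a Calabi--Yau threefold $e = 2h^{1,1} - 2h^{2,1}$ (since $b_3 = 2 + 2h^{2,1}$ cancels the $b_0 + b_6 = 2$), so $e = 128$ and $h^{2,1} = 4$ give $h^{1,1} = 68$ with no inconsistency; your formula $e = 2 + 2h^{1,1} - 2h^{2,1}$ double-counts and produces the spurious value $67$. Second, and more seriously, your derivation of $h^{2,1} = 4$ from the monodromy of $F_C$ is not a proof: you would need to show that $H^3(\widetilde V)$ decomposes as the rank-$8$ $F_C$ local system plus a $2$-dimensional piece, which is essentially what is to be established. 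The paper instead computes $h^{2,1} = h^1(\Theta_{\widetilde V})$ via the decomposition of Cynk--van Straten into $h^1(\Theta_{U_3}(\log B_3))$ (the space of equisingular deformations of the branch divisor, shown to be $3$ by an explicit elimination argument on octic polynomials with the prescribed $5$-fold points and triple lines) plus $h^1(\Theta_{U_3}\otimes\mathcal L^{-1})$ (the sum of genera of the blown-up curves, which is $1$ because the only non-rational center is the elliptic curve $S\cap H_4$). Without some such direct computation your value of $h^{2,1}$ is unsupported.

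A smaller but consequential omission: you do not identify the cubic component as the Cayley cubic, which has four nodes at the coordinate points $[1:0:0:0],\dots,[0:0:0:1]$. At each such point three of the hyperplanes meet the cubic \emph{at its node}, so the branch divisor has multiplicity $5$ there, not $4$; this changes which blow-ups are needed and whether they are crepant, and it is exactly the stratification (four $5$-fold points, twelve $4$-fold points, six triple lines $H_i\cap H_j\cap S$, and the double curves including the smooth cubic $S\cap H_4$) that feeds the Euler-number table yielding $e(\widetilde V) = 2\cdot 136 - 144 = 128$. As written, your stratification would not produce the correct resolution or the correct count.
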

\begin{proof}
The variety $V = V(x)$ is a double covering of $\PP^3$ branched along the following 
five planes $H_i$ and a nodal cubic surface $S$: 
\begin{align*}
H_i : T_i = 0 \quad (i=0,1,2,3), \qquad H_4 : (T_0 - T_1 - T_2 - T_3) = 0, \\ 
S : T_1 T_2 T_3 - T_0(x_1 T_2 T_3 + x_2 T_1 T_3 + x_3 T_1 T_2) = 0. 
\end{align*}
The cubic surface $S$ is known as the Cayley cubic, and it has four nodes 
\[
P_0 = [1:0:0:0], \quad P_1 = [0:1:0:0], \quad P_2 = [0:0:1:0], \quad P_3 = [0:0:0:1].
\]
The branch divisor $B = H_0 + \cdots + H_4 + S$ has singularities as given 
in the table below. 
\[
\begin{array}{|c||c|} 
\hline 
\text{5-fold points} & 
P_i = H_j \cap H_k \cap H_l \cap S, \quad (\{i,j,k,l\}=\{0,1,2,3\})
\\ \hline
\text{4-fold points} &
H_i \cap H_j \cap H_4 \cap S, \quad (i,j = 0,1,2,3)
\\ \hline
\text{triple lines} &
L_{ij} = H_i \cap H_j \cap S \quad (i,j = 0,1,2,3)
\\ \hline
\text{double curve} &
S \cap H_4 \ (\text{smooth cubic curve}), 
\quad L_{i4} = H_i \cap H_4 \quad (i = 0,1,2,3)
\\ \hline
\end{array}
\]
We resolve them in three steps by admissible blow-ups in \cite{CS}, 
according to Cynk and Szemberg. 
(However $B$ is not an arrangement in the sense of \cite{CS}, and we can not apply 
formulas in \cite{CS} for the pair $(\PP^3, B)$ directly). 
\\
(step 1) \ Let $\sigma_1 : U_1 \rightarrow \PP^3$ be the blow-up 
at $5$-fold points $P_0, P_1, P_2$ and $P_3$, and let $E_i$ be the exceptional 
divisor corresponding to $P_i$. We denote the strict transform of a subvariety 
$D \subset \PP^3$ by $D^{(1)}$, and take  
$B_1 = \sum H_k^{(1)} + S^{(1)} + \sum E_k$. 
On $E_0 \cong \PP^2$, three lines 
$H_k^{(1)} \cap E_0 \ (k=1,2,3)$ form a triangle with a circumscribed 
conic $S^{(1)} \cap E_0$. The same is true for other $E_i$. 
\begin{figure}[htbp] \begin{center}
\begin{overpic}[width=12cm]{blowup.eps}
 \put(11,7){$P_0$} \put(25,13){$\sigma_1$}  
 \put(92,12){$E_0 \cong \PP^2$} 
 \put(53,33){$L_{12}^{(1)}$} \put(62,28){$L_{23}^{(1)}$} \put(76,33){$L_{13}^{(1)}$}
\end{overpic}
\end{center} \end{figure}
\\
Now triple lines $L_{ij}^{(1)}$ are disjoint, and there are new twelve 
$4$-fold points as intersections of $L_{ij}^{(1)}$ and $E_k$. 
(step 2) \  Let $\sigma_2 : U_2 \rightarrow U_1$ be the blow-up along six triple 
lines $L_{ij}^{(1)}$. Let $E_{ij}$ be the exceptional divisor corresponding to 
$L_{ij}^{(1)}$,  that are $\PP^1$-bundles over $\PP^1$. 
\begin{figure}[htbp] \begin{center}
\begin{overpic}[width=10cm]{triple.eps}
\small
 \put(0,-2){$L_{12}^{(1)}$}   
 \put(-14,6){$E_0 \cap L_{12}^{(1)}$} 
 \put(-14,14){$E_3 \cap L_{12}^{(1)}$} 
 \put(-17,22){$H_4^{(1)} \cap L_{12}^{(1)}$}
 \put(17,17){$\sigma_2$}
 \put(55,30){$E_{12}$}  
 \put(37,-2){$H_1^{(2)} \cap E_{12}$} 
 \put(59,-2){$H_2^{(2)} \cap E_{12}$} \put(80,-2){$S^{(2)} \cap E_{12}$}
 \put(100,6){$E_0^{(2)} \cap E_{12}$} 
 \put(100,14){$E_3^{(2)} \cap E_{12}$} 
 \put(100,22){$H_4^{(2)} \cap E_{12}$}
\end{overpic}
\end{center} \end{figure}
\\
\normalsize
At this point, the branch divisor
\[
 B_2 = \sum H_k^{(2)} + S^{(2)} + \sum E_k^{(2)} + \sum E_{ij}
\]
is normal crossing, and $H_k^{(2)} \ (k=0,1,2,3)$ and $S^{(2)}$ are disjoint.
\\
(step 3) \  Let $\sigma_3 : U_3 \rightarrow U_2$ be a blow-up along double curves 
of $B_2$. This is not unique and depends on the order of blow-ups 
of double curves. However it does not affect on Euler characteristics of 
resultant varieties. We blow up in the following order:
\\
(i) blow up along $E_{ij} \cap H_i^{(2)} \ (i \ne 4)$ and $E_{ij} \cap S^{(2)}$,
\\
(ii) blow up along $E_{ij} \cap H_4^{(2)}$ and $E_{ij} \cap E_k^{(2)}$,
\\
(iii) blow up along $H_i^{(2)} \cap H_4^{(2)}$, \ $H_i^{(2)} \cap E_k^{(2)}$ \ 
$(i \ne 4, k)$, \ $S^{(2)} \cap H_4^{(2)}$ and  $S^{(2)} \cap E_k^{(2)}$,
\\
and put
\[
 B_3 = \sum H_k^{(3)} + S^{(3)} + \sum E_k^{(3)} + \sum E_{ij}^{(3)}.
\]
The Euler characteristics of $U_i$ and components of $B_i$ are changed as 
in the following table.
\[
\begin{array}{|c||c|c|c|c|c|c|} 
\hline 
& U_i & H_i \ (i \ne 4) & H_4 & S & E_i & E_{ij}
\\ \hline
U_0 = \PP^3 & 4 & 3 & 3 & 5 & - & -
\\ \hline
U_1 & 12 & 6 & 3 & 9 & 3 & -
\\ \hline
U_2 & 24 & 6 & 9 & 9 & 6 & 4
\\ \hline
U_3 - \rm{(i)} & 60 & 6 & 27 & 9 & 15 & 4
\\ \hline
U_3 - \rm{(ii)} & 96 & 6 & 27 & 9 & 15 & 4
\\ \hline
U_3 - \rm{(iii)} & 136 & 6 & 27 & 9 & 15 & 4
\\ \hline
\end{array}
\]
Let $\pi : \widetilde{V} \rightarrow U_3$ be the double covering branched along 
$B_3$. Then $\widetilde{V}$ is a Calabi-Yau variety with the Euler characteristic 
$e(\widetilde{V}) = 128$, and hence 
$h^{1,1}(\widetilde{V}) - h^{1,2}(\widetilde{V}) = 64$. 
\\ \indent
Next we compute Hodge numbers. By Proposition 2.1 in \cite{CvS}, we have
 \[
 \mathrm{H}^1(\widetilde{V}, \Theta_{\widetilde{V}}) \cong 
\mathrm{H}^1(U_3, \Theta_{U_3}(\log B_3))
\oplus \mathrm{H}^1(U_3, \Theta_{U_3} \otimes \mathcal{L}^{-1})
\]
where $\Theta_X$ is the tangent bundle, $\Theta_X(\log D)$ is the sheaf of 
logarithmic vector field (\cite{CvS}, \cite{EV}) and 
$\mathcal{L}^{\otimes 2} \cong \mathcal{O}_{U_3}(B_3)$. Moreover 
$\mathrm{H}^1(U_3, \Theta_{U_3}(\log B_3))$ is isomorphic to the space of equisingular 
deformations of $B$ in $\PP^3$, and $h^1(\Theta_{U_3} \otimes \mathcal{L}^{-1})$ 
is the sum of genera of all blown-up curves (see \cite{CvS}).  Since 
blown-up curves are rational except an elliptic curve $S \cap H_4$, we have
$h^1(\Theta_{U_3} \otimes \mathcal{L}^{-1}) = 1$. Let us show 
$h^1(\Theta_{U_3}(\log B_3))=3$. (Then we have $h^1(\Theta_{\widetilde{V}}) = 4$, and
we can conclude that $h^{1,2}(\widetilde{V}) = 4$ by the Serre duality since 
$K_{\widetilde{V}} \cong \mathcal{O}_{\widetilde{V}}$.) To show this, let $W$ be an octic 
surface which has similar singularities with $V(x)$. By a projective transformation, 
we may assume that 5-folds points of $W$ are $P_0, \dots, P_3$. Consequently    
triple lines of $W$ must be  $L_{ij}$. Since $W$ has multiplicity $5$ at $P_i$, 
the polynomial $F(T_0,\dots,T_3)$ defining $W$ is a linear combination of
\[
 T_i^3 T_j^3 T_k^2, \quad T_i^3 T_j^3 T_k T_l, \quad T_i^3 T_j^2 T_k^2 T_l, \quad
T_0^2 T_1^2 T_2^2 T_3^2, \qquad \{i,j,k,l\} =\{0,1,2,3\}. 
\]
Moreover $F$ belongs to ideals $(T_i^3, T_i^2T_j, T_iT_j^2, T_j^3)$ since 
$F$ vanishes on $L_{ij}$ with third order. Therefore $F$ does not have terms  
$T_i^3 T_j^3 T_k^2, \ T_i^3 T_j^3 T_k T_l$, and we have $F = T_0 T_1 T_2 T_3 G$
where $G$ is a linear combination of
\[
 T_i^2 T_j T_k, \quad T_0 T_1 T_2 T_3, \qquad \{i,j,k,l\} =\{0,1,2,3\}.
\]
Then the singular locus of a quartic $G=0$ must contain an elliptic curve $C$ 
of degree 3 (a deformation of $S \cap H_4$). Note that $C$ is on a certain plane 
$H$. Since 4-fold points of $W$ are on $C$, double lines connecting 4-fold 
points (a deformation of $L_{i4}$) must be on $H$. This implies that $G = 0$ 
decomposes into $H$ and a cubic surface. We see that $W$ coming from our branch 
divisors $B \subset \PP^3$, and we have $h^1(\Theta_{U_3}(\log B_3))=3$.
\end{proof} 
Changing our basis by the following matrix $P$ as in the case $n=2$, 
we have $H' = {}^tPHP$ and $M_k' = P^{-1}M_kP \ (k=0,1,2,3)$:
\begin{align*}
\small
P = \left(
\begin{array}{cccccccc}
 \frac{1}{2} & 0 & 0 & -\frac{1}{2} & 0 & -\frac{1}{2} & -\frac{1}{2} & 0 \\
 0 & -1 & 0 & 0 & 0 & 0 & 0 & 0 \\
 0 & 0 & -1 & 0 & 0 & 0 & 0 & 0 \\
 0 & 0 & 0 & 1 & 0 & 0 & 0 & 0 \\
 0 & 0 & 0 & 0 & 1 & 0 & 0 & 0 \\
 0 & 0 & 0 & 0 & 0 & 1 & 0 & 0 \\
 0 & 0 & 0 & 0 & 0 & 0 & 1 & 0 \\
 0 & 0 & 0 & 0 & 0 & 0 & 0 & 2 \\
\end{array}
\right),
\quad
H' = \left(
\begin{array}{cccccccc}
 0 & 0 & 0 & 0 & 0 & 0 & 0 & 1 \\
 0 & 0 & 0 & 0 & 0 & 0 & 1 & 0 \\
 0 & 0 & 0 & 0 & 0 & 1 & 0 & 0 \\
 0 & 0 & 0 & 0 & 1 & 0 & 0 & 0 \\
 0 & 0 & 0 & -1 & 0 & 0 & 0 & 0 \\
 0 & 0 & -1 & 0 & 0 & 0 & 0 & 0 \\
 0 & -1 & 0 & 0 & 0 & 0 & 0 & 0 \\
 -1 & 0 & 0 & 0 & 0 & 0 & 0 & 0 \\
\end{array}
\right),
\\
M_0' = \left(
\begin{array}{cccccccc}
 1 & 0 & 0 & 0 & 0 & 0 & 0 & 0 \\
 0 & 1 & 0 & 0 & 0 & 0 & 0 & 0 \\
 0 & 0 & 1 & 0 & 0 & 0 & 0 & 0 \\
 0 & 0 & 0 & 1 & 0 & 0 & 0 & 0 \\
 0 & 0 & 0 & 0 & 1 & 0 & 0 & 0 \\
 0 & 0 & 0 & 0 & 0 & 1 & 0 & 0 \\
 0 & 0 & 0 & 0 & 0 & 0 & 1 & 0 \\
 1 & 0 & 0 & 0 & 0 & 0 & 0 & 1 \\
\end{array}
\right),
\quad
M_1' = \left(
\begin{array}{cccccccc}
 1 & 2 & 0 & 0 & 0 & 0 & 0 & -2 \\
 0 & 1 & 0 & 0 & 0 & 0 & 0 & 0 \\
 0 & 0 & 1 & 1 & 0 & 0 & 0 & 0 \\
 0 & 0 & 0 & 1 & 0 & 0 & 0 & 0 \\
 0 & 0 & 0 & 0 & 1 & -1 & 0 & 0 \\
 0 & 0 & 0 & 0 & 0 & 1 & 0 & 0 \\
 0 & 0 & 0 & 0 & 0 & 0 & 1 & -2 \\
 0 & 0 & 0 & 0 & 0 & 0 & 0 & 1 \\
\end{array}
\right),
\\
M_2' = \left(
\begin{array}{cccccccc}
 1 & 0 & 2 & 0 & 0 & 0 & 0 & -2 \\
 0 & 1 & 0 & 1 & 0 & 0 & 0 & 0 \\
 0 & 0 & 1 & 0 & 0 & 0 & 0 & 0 \\
 0 & 0 & 0 & 1 & 0 & 0 & 0 & 0 \\
 0 & 0 & 0 & 0 & 1 & 0 & -1 & 0 \\
 0 & 0 & 0 & 0 & 0 & 1 & 0 & -2 \\
 0 & 0 & 0 & 0 & 0 & 0 & 1 & 0 \\
 0 & 0 & 0 & 0 & 0 & 0 & 0 & 1 \\
\end{array}
\right),
\quad
M_3' = \left(
\begin{array}{cccccccc}
 1 & 0 & 0 & 0 & -2 & 0 & 0 & -2 \\
 0 & 1 & 0 & 0 & 0 & 1 & 0 & 0 \\
 0 & 0 & 1 & 0 & 0 & 0 & 1 & 0 \\
 0 & 0 & 0 & 1 & 0 & 0 & 0 & -2 \\
 0 & 0 & 0 & 0 & 1 & 0 & 0 & 0 \\
 0 & 0 & 0 & 0 & 0 & 1 & 0 & 0 \\
 0 & 0 & 0 & 0 & 0 & 0 & 1 & 0 \\
 0 & 0 & 0 & 0 & 0 & 0 & 0 & 1 \\
\end{array}
\right).
\end{align*}
\normalsize
These give an integral representation of $\Mon$, but we do not know 
arithmeticy of $\Mon$, that is, the finiteness of the index $|\Sp(H',\ZZ) : \Mon|$. 

   Yoshiaki Goto \\
   General Education \\
   Otaru University of Commerce \\
   Midori 3-5-21, Otaru, Hokkaido, 047-8501 \\
   Japan \\
   \texttt{goto@res.otaru-uc.ac.jp}
\bigskip
\\ 
   Kenji Koike \\
   Faculty of Education \\ 
   University of Yamanashi \\
   Takeda 4-4-37, Kofu, Yamanashi, 400-8510 \\
   Japan \\
   \texttt{kkoike@yamanashi.ac.jp}
%
\end{document}